\documentclass{amsart}
\usepackage{amsmath}
\usepackage{amssymb}
\usepackage[all]{xy}
\usepackage{comment}
\usepackage{color}

%% Comment out the following command to get rid of displayed labels
 \usepackage[notcite,notref]{showkeys}

%% We'll use the theorem counter for all our theorem environments, so
%% that everything will be numbered in the same sequence.

%%       Theorem environments

\theoremstyle{plain}
\newtheorem{thm}{Theorem}[section]
\newtheorem{thm*}{Theorem}[section]
\newtheorem{cor}[thm]{Corollary}
\newtheorem{prop}[thm]{Proposition}

\newtheorem{lemma*}{Lemma}

\theoremstyle{definition}
\newtheorem{defn}[thm]{Definition}
\newtheorem{remark}[thm]{Remark}

%% The following leaves Remarks unnumbered.
\newtheorem*{remark*}{Remark}

\newtheorem{ex}[thm]{Example}

\newtheorem{question*}{Question}

%% The following causes equations to be numbered within  the
%%theorem/lemma/proposition etc. they belong to.
\numberwithin{equation}{thm}

%% This is the place to put macro definitions

\newcommand{\cN}{\mathcal N}

\def\Spec{\operatorname{Spec}\nolimits}

\newcommand{\bG}{\mathbb G}

\newcommand{\bA}{\mathbb A}

\newcommand{\bF}{\mathbb F}

\newcommand{\cC}{\mathcal C}

\newcommand{\cE}{\mathcal E}

\newcommand{\fg}{\mathfrak g}
\newcommand{\fh}{\mathfrak h}

\newcommand{\fu}{\mathfrak u}

\newcommand{\gl} {\mathfrak {gl}}

\newcommand{\ol}{\overline}
\newcommand{\ul}{\underline}
%\newcommand{\wt}{\wtdetilde}

%Julia's additional macros

\def\Spec{\operatorname{Spec}\nolimits}
\def\sl2{\operatorname{SL_{2(2)}}\nolimits}
\def\Ga2{\operatorname{\mathbb G_{\rm a(2)}}\nolimits}

\newcommand{\Z}{\mathbb Z}

\newcommand{\E}{\mathcal E}

%The environment takes two arguments, and will indent the left and right margins, respectively, by the parameters� values. 
%Negative values will cause the margins to be narrowed, so 
%\begin{changemargin}{-1cm}{-1cm} 
%narrows the left and right margins by 1 centimetre. 

\setcounter{section}{-1}

\date\today

\begin{document}

 \title[Support varieties for rational representations]{Support varieties for rational representations}
 
 \author[ Eric M. Friedlander]
{Eric M. Friedlander$^{*}$} 

\address {Department of Mathematics, University of Southern California,
Los Angeles, CA}
%%% Email address is optional.
\email{ericmf@usc.edu}
\email{eric@math.northwestern.edu}

\thanks{$^{*}$ partially supported by the NSF grants DMS-0909314 and DMS-0966589.}

\subjclass[2010]{20G05, 20C20, 20G10}

\keywords{support variety, 1-parameter subgroup}

\begin{abstract}   
We introduce support varieties for rational representations of a linear
algebraic group $G$ of exponential type over an algebraically closed field $k$
of characteristic $p > 0$.  These varieties are closed subspaces of
the space $V(G)$ of all 1-parameter subgroups of $G$.  The functor $M \mapsto
V(G)_M$ satisfies many of the standard properties of support varieties 
satisfied by 
finite groups and other finite group schemes.  Furthermore, there is a close 
relationship between $V(G)_M$ and the family of support varieties $V_r(G)_M$
obtained by restricting the $G$ action to Frobenius kernels $G_{(r)} \subset G$.
These support varieties seem particularly appropriate for the investigation of
 infinite dimensional rational $G$-modules.
\end{abstract}

\maketitle

%\tableofcontents 

\section{Introduction}

The purpose of this paper is to formulate  a suitable theory of support varieties for rational 
representations for a natural class of linear algebraic groups $G$ which includes the classical 
simple groups.    We work over an algebraically closed field $k$ of characteristic $p > 0$, so
that we consider modular representations of $G$: actions of $G$ on $k$-vector spaces.
Our criteria for ``suitability" include i.) a description which reflects the structure
of $G$; ii.) a theory that applies to all rational representations $M$ of 
$G$;  iii.) expected properties for direct sums, tensor products, extensions, and Frobenius twists;
and iv.) a structure $V(G)_M$ which incorporates the information of the support 
variety of the rational representation $M$ of $G$ when restricted to any Frobenius kernel
$G_{(r)} \subset G$.  Our formulation is an extension of the approach 
 of C. Bendel, A. Suslin, and the author  \cite{SFB2}; we employ 1-parameter subgroups 
rather than traditional methods of cohomology
(e.g., \cite{Ca}) or the more recent methods of $\pi$-points (e.g., \cite{FP1}).    The reader is referred to
 \cite{F2} for a brief history of support varieties, beginning with the fundamental work of 
 D. Quillen \cite{Q1}, \cite{Q2}.  For brevity, we usually use ``rational $G$-module" to refer to a rational
 representation of $G$.

We remind the reader that support varieties (for representations of a finite group, a restricted Lie algebra,
or the infinitesimal kernel of a linear algebraic group) give some measure of the local projectivity of the 
representation with respect to $p$-nilpotent actions.  Support varieties have their origins in the formulation of
the complexity (rate of growth) of projective resolutions and they reflect properties of extensions 
rather than structures of irreducibles.   Refined support varieties have been introduced by the author
and J. Pevtsova \cite{FP2} in order to capture further information about representations of finite group schemes.
We extend these invariants to rational representations of linear algebraic groups of exponential type.  

The existence of an appropriate theory of support varieties for rational representations
is not evident.  For example, the rational 
cohomology of $G$ with coefficients in $k$ ususally vanishes in positive degrees for simple algebraic groups over $k$
so that the familiar cohomological methods do not apply.   Indeed, projective resolutions of rational
representations typically do not exist, and injective representations are typically infinite dimensional.
In fact, we have no direct cohomological interpretation of our invariants for a linear algebraic group $G$,
other than their relationship with invariants for the Frobenius kernels $G_{(r)}$ of $G$.
Furthermore, the formulation of local projectivity for rational $G$-modules
appears unpromising at first for there does not
appear to be a good space of $p$-nilpotent operators on which to test local projectivity.
Another obstacle which arises in formulating a theory of support varieties for rational $G$-modules is 
that invariants for the infinitesimal kernels $G_{(r)}$ of a linear algebraic
group $G$ do not ``match up" with respect to either the natural projections $G_{(r+1)} \to G_{(r)}$ or the 
natural embeddings $G_{(r)} \to G_{(r+1)}$.

Our formulation of the support variety $V(G)_M$ (Definition \ref{supportG}) 
of a rational $G$-module $M$ is as a subset of $V(G)$, the set of all 1-parameter subgroups of $G$
given the induced topology as a natural subset of  the inverse limit of the $k$-rational points of 
schemes of  infinitesimal 1-parameter subgroups of $G$; for $M$ finite dimensional, $V(G)_M$ is
closed in $V(G)$.  This formulation entails the definition of the ($p$-nilpotent) action of $G$ on $M$ 
at a 1-parameter subgroup $\bG_a \to G$ of $G$.  (See Definition \ref{Gaction}.)   A somewhat
confusing twist of indexing is required to enable compatibility of this formulation with that 
for the support variety of $M$ restricted to Frobenius kernels $G_{(r)} \subset G$ of G.  

Extending work of J. Pevtsova and the author in \cite{FP2} for finite group schemes, 
we  introduce for each $j > 0$ the``non-maximal $j$-rank 
varieties" $V^j(G)_M$ for a finite dimensional rational $G$-module $M$ which 
detect further information about $M$ given in terms of the locus
of  Jordan types.  (See Definition \ref{defn:gen}.)  The key ingredient of this refinement is the 
well-definedness of  maximal Jordan types for a finite group scheme proven in \cite{FPS}.

The linear algebraic groups for which we construct a theory of support varieties are those with a 
structure of exponential type as formulated in Definition \ref{str-exptype}.  
Work of P. Sobaje shows that a reductive group $G$ has such a structure provided
that $p \geq h(G)$,  the Coxeter number of $G$ \cite{S2}.  Other  examples of such groups
are simple groups of classical type, parabolic subgroups of such groups,
and unipotent radicals of these parabolic subgroups \cite{SFB1}.  

Basic properties satisfied by $M \mapsto V(G)_M$ are given in Theorem \ref{Ga-items}.
These include the expected behavior with respect to direct sums, tensor products, and Frobenius twists
of rational $G$-modules.  If $M$ is finite dimensional, then $V(G)_M \subset V(G)$ is a closed, $G$-stable
subset; moreover, for $M$ finite dimensional,  $V(G)_M$  is determined
by the restriction of $M$ to $G_{(r)}$ for $r$ sufficiently large (depending upon $M$).

For a given finite dimensional rational $G$-module $M$, the computation of the support variety of $M$ reduces to a 
computation of support variety of $M$ as a $G_{(r)}$-module for $r$ sufficiently large depending upon 
$M$ (see Theorem \ref{G-items}).  The ``Jantzen Conjecture"  (proved in  \cite{NPV}) and \cite{O}) enables 
some computations in Proposition \ref{induced-tensor}.   
Examples \ref{ex-St} and \ref{ex-max} are explicit computations of certain $V(G)_M$ and $V^j(G)_M$.

In the final section of this paper, we consider natural examples of infinite dimensional rational $G$-modules.
In Proposition \ref{ker-inj}, we should that if $G$ admits a structure of exponential type and $L$ is an
injective rational $G$-module, then the support variety $V(G)_L$ is trivial.  This, together with general 
properties of our support varieties, leads to various examples given at the end of Section \ref{sec:infdim}.

Throughout this paper, we consider affine group schemes of finite type over an algebraically
closed field $k$ of characteristic $p > 0$.  We use the terminology ``linear algebraic group" to mean
a reduced, irreducible group scheme of finite type over $k$ which admits a closed embedding 
into some general linear group $GL_n$ over $k$.  We refer the reader to \cite{J}, \cite{SFB1}, and
\cite{SFB2} for some general background we require.

We express our gratitude to Julia Pevtsova for numerous conversations on matters related to
support varieties and to Paul Sobaje for various results found in \cite{S1} and \cite{S2} which
have influenced our thinking about 1-parameter subgroups and shaped our definition of a
linear algebraic group of exponential type. 

%%%%%%%%%%%%%%%Sec 1%%%%%%%%%%
%%%%%%%%%%%%%%%%%%%%%%%%%%%

\section{1-parameter subgroups}

In this first section, we begin by recalling a few facts about (infinitesimal) 1-parameter subgroups
$\bG_a \to G$ of $G$, where $\bG_a$ is the additive group.  
We  explore the special  case $G = \bG_a$, recalling a concrete description of all 1-parameter subgroups 
of $\bG_a$.   Thie example of $G = \bG_a$ will serve as a guide for much more general $G$, those of 
exponential type.
The analogous form of 1-parameter subgroups for classical groups motivates our formulation 
in Definition  \ref{str-exptype} of a  linear algebraic group with a structure of exponential type.  
These are the algebraic
groups for which we construct support varieties.  We conclude this section with a determination of the
effect of pre-composition and post-composition with the Frobenius morphism.

For any group scheme $G$ of finite type over $k$ and and $r > 0$, we denote by $F^r: G \to G^{(r)}$ 
the $r$-th Frobenius morphism, where $G^{(r)}$ is the base change of $G$ along the $p^r$-th power 
map $\phi: k \to k$ (see\cite[\S 1]{FS}); 
we denote by $G_{(r)}$ the $r$-th Frobenius kernel of $G$: 
$$G_{(r)} \ \equiv \ ker\{ F^r: G \to G^{(r)} \}.$$
When $G$ is defined over $\bF_{p^r}$ (i.e., $G = \Spec k \times_{\Spec \bF_{p^r}} G_0$ for some group scheme
$G_0$ defined over $\bF_{p^r}$), then we shall use the natural identification of $G^{(r)}$ with $G$.

\begin{defn}
Let $G$ be a connected group scheme of finite type over $k$.  

For any $r > 0$, we denote by $V_r(G)$ the affine scheme (cf. \cite{SFB1})
$$V_r(G)  \ \equiv \ Hom_{grp/k}(\bG_{a(r)},G)$$
of homomorphisms of group schemes over $k$ from the $r$-th Frobenius kernel of the additive group 
to  $G$.  Such a homomorphism will be called an infinitesimal
1-parameter subgroup of $G$ (of height $r$).  The set of $k$-points of the scheme $V_r(G)$, $V_r(G)(k)$, 
is endowed with the Zariski topology.

We denote by $V(G)$ the topological subspace
\begin{equation}
\label{V(G)}
V(G) \ \equiv \ Hom_{grp/k}(\bG_a,G)  \ \subset \ \varprojlim_r V_r(G)(k)
\end{equation} 
of 1-parameter subgroups of $G$ viewed as a subspace of $\varprojlim_r V_r(G)(k)$
endowed with the inverse limit topology.  Thus, the topology on $V(G)$ is
the weakest topology making  each projection map
$pr_r: V(G) \to V_r(G)(k)$ continuous, where $pr_r$ is defined by sending $\psi:\bG_a \to G$ to
its restriction to $\bG_{a(r)}$.
\end{defn}

Before giving examples, we mention some useful properties of $V_r(G)$.

\begin{prop}\label{1param}
Let $G$ be a linear algebraic group and $G \subset GL_n$ a closed embedding.
\begin{enumerate}
\item
$V_r(G) \ \simeq \ Hom_{grp}(\bG_{a(r)},G_{(r)})$.
\item
$G_{(r)} \ = \ G \ \cap \ GL_{n(r)}$.
\item
$V_r(G)  \ = \ V(G) \cap V_r(GL_n)$.
\item
$G$ acts (via conjugation)
on each of the schemes $V_r(G)$ and $G(k)$ acts (via conjugation) on the space $V(G)$.
\end{enumerate}
\end{prop}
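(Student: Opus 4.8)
The plan is to prove (1) and (2) first, as the structural inputs, deduce (3) formally from them, and finally check the functoriality and topological compatibility needed for (4). For (1), the key point is that $\bG_{a(r)}$ is infinitesimal of height $r$: its own $r$-th Frobenius $F^r_{\bG_{a(r)}}$ is the trivial homomorphism, since on points $x \mapsto x^{p^r} = 0$. I would show that any homomorphism $\phi\colon \bG_{a(r)} \to G$ automatically factors through $G_{(r)}$. By naturality of Frobenius applied to $\phi$ one has $F^r_G \circ \phi = \phi^{(r)} \circ F^r_{\bG_{a(r)}}$, and the right-hand side is trivial; hence $\phi$ has image in $\Ker F^r_G = G_{(r)}$. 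Conversely, post-composing with the inclusion $G_{(r)} \hookrightarrow G$ turns a homomorphism into $G_{(r)}$ into one into $G$. These constructions are mutually inverse and natural in every commutative $k$-algebra $R$, so they identify the representing schemes $V_r(G) = Hom_{grp}(\bG_{a(r)},G)$ and $Hom_{grp}(\bG_{a(r)},G_{(r)})$, not merely their $k$-points.

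For (2), I would use that the closed embedding $G \hookrightarrow GL_n$ is compatible with Frobenius: there is a commutative square whose vertical arrows are $F^r_G$ and $F^r_{GL_n}$ and whose horizontal arrows are the embedding and its twist $G^{(r)} \hookrightarrow GL_n^{(r)}$, the latter again a closed immersion. An $R$-point of $G$ lies in $G_{(r)} = \Ker F^r_G$ exactly when its image in $G^{(r)}(R)$ is trivial; since $G^{(r)} \hookrightarrow GL_n^{(r)}$ is a monomorphism, this holds iff its image in $GL_n^{(r)}(R)$ is trivial, i.e. iff the point lies in $G \cap GL_{n(r)}$. This identifies the two as subfunctors, hence as closed subschemes of $G$. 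Part (3) then follows formally by combining (1) and (2): a homomorphism $\bG_{a(r)} \to GL_n$ that factors through $G$ and through $GL_{n(r)}$ factors through $G \cap GL_{n(r)} = G_{(r)}$, and conversely; since every element of $V_r(GL_n)$ already factors through $GL_{n(r)}$ by (1) applied to $GL_n$, intersecting the locus of homomorphisms factoring through $G$ with $V_r(GL_n)$ recovers exactly $V_r(G)$. The one point needing care here is to fix the common ambient object in which the intersection $V(G) \cap V_r(GL_n)$ is formed, which I address below.

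For (4), conjugation $c_g(x) = g x g^{-1}$ is a group automorphism of $G$ for each point $g$, so $\phi \mapsto c_g \circ \phi$ preserves both $Hom_{grp}(\bG_{a(r)},G)$ and $Hom_{grp}(\bG_a,G)$; functoriality in $R$ promotes this to an action of the group scheme $G$ on $V_r(G)$, with associativity and the identity axiom inherited from conjugation. For $V(G)$ I would note that these maps commute with every restriction $pr_r$, namely $pr_r(c_g \circ \psi) = c_g \circ pr_r(\psi)$, so each is continuous for the inverse limit topology (the weakest making all $pr_r$ continuous) and has continuous inverse $c_{g^{-1}}$, hence is a homeomorphism, giving the $G(k)$-action on $V(G)$. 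The genuinely load-bearing step in the whole proposition is the factorization in (1), which rests entirely on $\bG_{a(r)}$ being of height $r$ together with naturality of Frobenius; the remainder is formal. The \emph{secondary} points requiring attention are verifying that the $k$-point identifications in (1)--(2) are scheme-theoretic (via the functor of points over all $R$) and fixing the common ambient scheme $V_r(GL_n)$ for the intersection asserted in (3).
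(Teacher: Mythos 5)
Your proposal is correct and follows essentially the same route as the paper: part (1) via the factorization of any homomorphism $\bG_{a(r)} \to G$ through $G_{(r)}$ (which you justify, as is standard, by naturality of Frobenius and the vanishing of $F^r$ on $\bG_{a(r)}$), part (3) as a formal consequence of (1) and (2) with the intersection read inside $V_r(GL_n)$, and part (4) by functoriality of conjugation over all $k$-algebras. The only divergence is that where the paper disposes of (2) by citing \cite[I.9.4]{J}, you prove it directly from Frobenius-compatibility of the closed embedding; this is just the standard argument behind that citation, not a different method.
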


\begin{proof}
Assertion (1) follows from the observation that any map of group schemes 
$\bG_{a(r)} \to G$ factors uniquely through the closed embedding $i_r: G_{(r)} \to G$.

Assertion (2) can be found in \cite[I.9.4]{J}, and assertion (3) follows immediately
from (1) and (2).

Assertion (4) is easily verified by viewing the action of $G$ on $V_r(G)$ as a
functorial action of $G(A)$ on $(V_r(G))(A)$ as $A$ ranges overs
 finitely generated commutative $k$-algebras.
\end{proof}

Our first example is $G = \bG_a$, the additive group.  Throughout this paper, the 
special case $G = \bG_a$ will serve as our ``test case" for new constructions.

\begin{ex}
\label{additive}
We denote by 
$$\sigma_{\ul a} \ \equiv \ \sum_{s \geq 0} a_sF^s: \bG_a \to \bG_a$$
the 1-parameter subgroup of $\bG_a$ given by a finite sequence 
$\ul a = (a_0,\ldots,a_s,\ldots ) \in \bA^\infty$ (i.e., $a_s = 0$ for $s > > 0$) of elements of $k$.
Here, $F^s: \bG_a \to \bG_a$ is the $s$-th iterate of the Frobenius morphism
given by the $p^s$-th power map $k[T] \to k[T], \ T \mapsto T^{p^s}$. 
 Thus,
the map on coordinate algebras $\sigma_{\ul a}^*$ is given by
$T\mapsto \sum_{s \geq 0} a_sT^{p^s}$.  In the special case that $\ul a$
has the single non-zero term $a_0 = b$, the 1-parameter subgroup 
$\sigma_{\ul a}: \bG_a \to \bG_a$ is
just multiplication by $b$ and will be denoted $\sigma_b$.   For any $r >0$, 
there is a natural isomorphism of schemes \cite[1.10]{SFB1}
$$ \bA^r \ \stackrel{\sim}{\to} \ V_r(\bG_a) , \quad \ul a \mapsto
\sigma_{\ul a} \circ i_r\ = \  (\sum_{s\geq 0}^{r-1} a_sF^s) \circ i_r: \bG_{a(r)} \to \bG_a \to \bG_a.$$
\end{ex}

\vskip .1in

For a $p$-nilpotent, $n\times n$ matrix $A$ with entries in $k$, we denote by  
$$exp_A: \bG_a \to SL_n \subset GL_n$$
the 1-parameter subgroup  given by the functor on commutative $k$-algebras $R$ sending
$r \in R$ to the matrix $\sum_{s=0}^{p-1} r^s A^s/(s!).$
Let $\gl_n$ denote the (restricted) Lie algebra of $GL_n$ and let $\cN_p(\gl_n) \subset \gl_n$ 
denote the closed subvariety $p$-nilpotent matrices.
We denote by 
\begin{equation}
\label{crN}
C_r(\cN_p(\gl_n)) \ \subset \ (\cN_p(\gl_n))^{\times r}
\end{equation} the variety of $r$-tuples $(A_0,\ldots,A_{r-1})$ of $p$-nilpotent,
pair-wise commuting $n\times n$ matrices.  We let $C_\infty(\cN_p(\gl_n))$ denote
the colimit (i.e., union) of the $C_r(\cN_p(\gl_n))$, so that a point
of  $C_\infty(\cN_p(\gl_n))$ is a finite sequence $(A_0,\ldots,A_s,\ldots)$ (i.e., $A_s = 0$ for 
$s > > 0$) of $p$-nilpotent, 
pair-wise commuting $n\times n$ matrices.

Example \ref{additive} has the following analogue for $G = GL_n$.

\begin{ex}
\label{linear}
For any $\ul A = (A_0,A_1,\ldots,) \in C_\infty(\cN_p(\gl_n))$,  we denote by 
$$exp_{\ul A} \ \equiv \  \prod_{s\geq 0} (exp_{A_s} \circ F^s): \bG_a \to GL_n
$$
the indicated 1-parameter subgroup of $GL_n$.  For any $r >0$, 
\begin{equation}
 \label{reverse}
\ul A \ \mapsto \ exp_{\ul A} \circ i_r: \bG_{a(r)} \to \bG_a \to GL_n
\end{equation}
determines a natural isomorphism \cite[1.4]{SFB1}
$$C_r(\cN_p(\gl_n)) \ \stackrel{\sim}{\to} \ V_r(GL_n).$$
\end{ex}

As we discuss in the next example, Example \ref{linear} extends to linear algebraic
groups $G$ with an embedding of exponential type $G \subset GL_N$.  We recall that  
simple algebraic groups of classical type with their natural embeddings into linear groups are embeddings 
of exponential type \cite[1.8]{SFB1}.   A particularly simple class of examples of embeddings
of exponential type are the embeddings of  root subgroups $\bG_a \subset G$
of a reductive group $G$ as in Example \ref{Xij}.

\begin{ex}
\label{exptype}
A closed embedding $j: G \subset GL_N$ of  algebraic groups  is said 
to be of {\it exponential type} (as in \cite{SFB1}) if
for all commutative $k$ algebras $A$ and all $p$-nilpotent $x \in \fg\otimes A$ the exponential map
$$exp_{(dj)(x)}: \bG_a(A) \ \to \ GL_N (A)$$
factors through $j: G(A) \to GL_N(A)$.  Here, $\fg$ is the Lie algebra of $G$.
This condition is equivalent to the condition that the exponential map for $GL_N$
restricts to determine a map of $k$-schemes
$$\E: \cN_p(\fg) \times \bG_a \ \to \ G.$$

For such $j: G \subset GL_N$ of exponential type, (\ref{reverse}) restricts to isomorphisms
$$C_r(\cN_p(\fg))  \ \stackrel{\sim}{\to} \  V_r(G), \quad \ul B \mapsto \  exp_{\ul B} \circ i_r =
\prod_{s=0}^{r-1}(exp_{B_s} \circ F^s) \circ i_r: \bG_{a(r)} \to \bG_a \to G.$$
\end{ex}

We formulate a class of linear algebraic groups  $G$ more general than those admitting a closed embedding
of exponential type.  These are the algebraic groups $G$ for which we construct our theory of 
support varieties.

\begin{defn}
\label{str-exptype}
Let $G$ be a linear algebraic group with Lie algebra $\fg$.   A structure of exponential type
on $G$ is a $G$-equivariant morphism of $k$-schemes (with respect to the adjoint actions 
on $\fg$ and $G$)
\begin{equation}
\label{Exp}
\cE: \cN_p(\fg) \times \bG_a \ \to G, \quad (B,s) \mapsto \cE_B(s)
\end{equation}
such that
\begin{enumerate}
\item
For each $B\in \cN_p(\fg)(k)$, $\cE_B: \bG_a \to G$ is a 1-parameter subgroup.
\item
For any pair of  commuting $p$-nilpotent elements $B, B^\prime \in \fg$,
the maps $\cE_B, \cE_{B^\prime}: \bG_a \to G$ commute.
\item
For any commutative $k$-algebra $A$, any $\alpha \in A$,  and any 
$s \in \bG_a(A)$, \ $\cE_{\alpha \cdot B}(s) = \cE_B(\alpha\cdot s)$.
\item
Every 1-parameter subgroup $\psi: \bG_a \to G$ of $G$  is of the form 
\begin{equation}
\label{exponent}
\cE_{\ul B} \ \equiv \  \prod_{s=0}^{r-1} (\cE_{B_s} \circ F^s)
\end{equation}
for some $r > 0$, some $\ul B \in C_r(\cN_p(\fg))$;
furthermore,  $C_r(\cN_p(\fg)) \ \to \ V_r(G),\quad \ul B \mapsto \cE_{\ul B} \circ i_r$ 
is an isomorphism.
\end{enumerate}
\end{defn}

\vskip .1in

Condition (2) of Definition \ref{str-exptype} is equivalent to the condition that the map 
$\cE_B \bullet \cE_{B^\prime}: \bG_a \to G\times G$ factors as a map of group schemes
 through the diagonal map
$diag: \bG_a \to \bG_a \times \bG_a$.

Observe that the condition on $G$ that it should admit a structure of exponential type implies that
every infinitesimal 1-parameter subgroup $\bG_{a(r)} \to G$ admits a natural 
lifting to a 1-parameter subgroup $\bG_a \to G$.  
Furthermore, if $\psi: \bG_a \to G$ satisfies the condition that  $\psi^*$ applied to each element 
of some set of generators of $k[G]$  is a polynomial 
in $k[\bG_a] = k[T]$ of degree $< p^r$, then $\psi = \cE_{\ul B}$ for some $\ul B \in C_r(\cN_p(\fg))$.

\begin{remark}
By Definition \ref{str-exptype}(4), a structure $\cE: \cN_p(\fg) \times 
\bG_a \to G$ of exponential type determines a ``$p$-nilpotent Springer 
isomorphism" $\cN_p(\fg) \stackrel{\sim}{\to} U_p(G), B \mapsto \cE_B(1)$. 
 If $\cE^\prime$ is another structure of exponential 
type on $G$, then there is a unique automorphism $\phi_{\cE,\cE^\prime}: 
\cN_p(\fg) \to \cN_p(\fg)$ relating the $p$-nilpotent Springer isomorphisms
associated to $\cE, \cE^\prime$.  Moreover, $\phi_{\cE,\cE^\prime}$
determines an automorphism $\Phi_{\cE,\cE^\prime}: V(G) \to V(G)$ using (\ref{exponent}).
As is readily verified, the support variety $V(G)_M \subset V(G)$ of a rational 
$G$-module $M$ as defined in Definition \ref{supportG} with respect to $\cE$ 
has image under $\Phi_{\cE,\cE^\prime}$ the support variety of $M$ with 
respect to $\cE^\prime$.
\end{remark}

\begin{ex}
\label{Sobaj}
As shown by Sobaje in \cite[4.2]{S2}, if $G$ is reductive with $p \geq h$, and if $PSL_p$ is not a 
factor of $[G,G]$, then $G$ can be given a structure of exponential type by defining $\cE$ 
to be the exponential map
constructed by Seitz in \cite[5.3]{Sei} on a Borel subgroup  and extending to a $G$-equivariant 
map using work of  Carlson, Lin, Nakano \cite{CLN} and McNinch \cite{McN}.  
Moreover, if $G$ is such a reductive group
and if $H \subset G$ is a parabolic subgroup  or  the unipotent radical in a parabolic subgroup
and if $\fh = Lie(H)$,
then the restriction of $\cE$ to $\cN_p(\fh)$ provides $H$ with a structure of exponential type.

Recent work of Sobaje suggests that any reductive group $G$ can be given a structure of 
exponential type provided that $p$ is very good for $G$.
\end{ex}

We provide here the evident definition of a map of linear algebraic groups equipped with structures
of exponential type.  A natural example of such a map is a closed embedding $G \subset GL_N$ 
of exponential type as in Example \ref{exptype}.   If $G, \ G^\prime$ are provided with structures $\cE,\ \cE^\prime$
of exponential type, then the inclusion $1\times e: G \to G\times G^\prime$ 
(sending $g \in G$ to $(g,e) \in G\times G^\prime$) and the projection
$pr_1: G\times G^\prime \to G$ are maps of exponential type.

\begin{defn}
\label{defn-exp}
Let $G, \ G^\prime$ be linear algebraic groups equipped with structures of exponential type $\cE, \ \cE^\prime$.  
Then a homomorphism of algebraic groups $f: G \to G^\prime$ is said to be a map of exponential type
if the following square commutes:
\begin{equation}
\label{diffl}
\begin{xy}*!C\xybox{%
\xymatrix{
\cN_p(\fg) \times \bG_a  \ar[rr]^{\cE} \ar[d]_{df \times id}  &   & G \ar[d]^f\\
\cN_p(\fg^\prime)\times  \bG_a  \ar[rr]^{\cE^\prime} & &  G^\prime }
}\end{xy}
\end{equation}
\end{defn}

The following Example \ref{ex:sep} includes parabolic subgroups of
reductive groups (which are  semi-direct products of their unipotent radicals and  Levi quotients).

\begin{ex}
\label{ex:sep}
Let $G$ be a linear algebraic group equipped with a structure of exponential type and assume
that $G$ can be writen as the semi-direct product $G \ \simeq \ H \rtimes K$;
$H \subset G$ is a reduced, closed subgroup and $\pi: G \to K$ admits a splitting $s: K \to G$. 
If both $H, s(K) \ \subset \ G$ are embeddings of exponential type, then
$\pi: G \to G/K$ is also a map of exponential type.
\end{ex}

We conclude this section by making explicit the actions on $V(G)$ of pre-composition and post-composition
with the Frobenius morphism.

\begin{prop}
\label{twist}
Let $G$ be a linear algebraic group equipped with a structure of exponential
type.  Then precomposition with  the Frobenius morphism induces the self-map
$$(-\circ F): V(G) \to V(G), \quad \cE_{\ul B} \mapsto \cE_{\ul B \circ F}$$
where $(B_0,B_1,\ldots) \circ F \ = \  (0,B_0,B_1\ldots).$

Furthermore, if $G \hookrightarrow GL_n$ is an embedding of exponential type defined over $\bF_p$, then 
post-composition with the Frobenius morphism induces the self-map
$$(F \circ -): V(G) \to V(G), \quad exp_{\ul B} \mapsto \exp_{F( \ul B) }$$
where $F (B_0,B_1,\ldots) \ = \  (0, B_0^{(1)},B_1^{(1)},\ldots).$
Here, $B^{(1)}$ is the $n\times n$-matrix obtained by raising each entry of $B$
to the $p$-th power.
\end{prop}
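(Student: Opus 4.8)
The plan is to handle the two assertions separately, deducing the second from the first together with the naturality of Frobenius. For precomposition, fix $\psi = \cE_{\ul B}$ with $\ul B = (B_0,\dots,B_{r-1}) \in C_r(\cN_p(\fg))$, so that on an $A$-point $t$ one has $\psi(t) = \prod_{s=0}^{r-1}\cE_{B_s}(t^{p^s})$, the factors commuting in $G$ by Definition \ref{str-exptype}(2). I would compute directly
$$(\cE_{\ul B}\circ F)(t) \ = \ \cE_{\ul B}(t^p) \ = \ \prod_{s=0}^{r-1}\cE_{B_s}(t^{p^{s+1}}) \ = \ \prod_{s'=1}^{r}\cE_{B_{s'-1}}(t^{p^{s'}})$$
and reindex. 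Since $\cE_0$ is the trivial subgroup (take $\alpha = 0$ in Definition \ref{str-exptype}(3) and use $\cE_B(0)=e$), the shifted tuple $\ul B' = (0,B_0,\dots,B_{r-1})$ again lies in $C_{r+1}(\cN_p(\fg))$, and the display reads precisely $\cE_{\ul B}\circ F = \cE_{\ul B'}$, giving the rule $(B_0,B_1,\dots)\circ F = (0,B_0,B_1,\dots)$. Continuity is then immediate from the inverse limit topology defining $V(G)$: the composite $pr_r\circ(-\circ F)$ factors as $pr_{r-1}$ followed by the closed embedding $C_{r-1}(\cN_p(\fg))\hookrightarrow C_r(\cN_p(\fg))$, $\ul C\mapsto(0,\ul C)$, both of which are continuous.

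For postcomposition, the hypothesis that $G\hookrightarrow GL_n$ is defined over $\bF_p$ is what makes $F\circ-$ a self-map at all, since the canonical identification $G^{(1)}\simeq G$ converts $F\colon G\to G^{(1)}$ into an endomorphism of $G$. The key tool is the naturality square of Frobenius applied to $\psi\colon\bG_a\to G$, namely $F\circ\psi = \psi^{(1)}\circ F_{\bG_a}$, with $\psi^{(1)}$ the Frobenius twist and $F_{\bG_a}$ the standard Frobenius on the source. I would next identify the twist of $exp_{\ul B}$: the $(j,k)$-entry of $exp_{B_s}(t)=\sum_{i=0}^{p-1}t^iB_s^i/i!$ is a polynomial in $t$ whose coefficients are, up to the $\bF_p$-scalars $1/i!$, polynomial expressions in the entries of $B_s$; because the $p$-th power is a ring endomorphism of $k$, twisting replaces each such coefficient by its $p$-th power, which amounts to replacing $B_s$ by the entrywise $p$-th power $B_s^{(1)}$. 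Hence $(exp_{\ul B})^{(1)} = exp_{\ul B^{(1)}}$ with $\ul B^{(1)}=(B_0^{(1)},B_1^{(1)},\dots)$, and one checks $B_s^{(1)}\in\cN_p(\fg)$ using that $\fg$ is $\bF_p$-defined and that $p$-nilpotency is preserved by the entrywise $p$-th power. Substituting into the naturality square and then applying the precomposition rule yields
$$F\circ exp_{\ul B} \ = \ exp_{\ul B^{(1)}}\circ F_{\bG_a} \ = \ exp_{(0,B_0^{(1)},B_1^{(1)},\dots)},$$
exactly the asserted formula.

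I expect the main obstacle to be the bookkeeping surrounding the $\bF_p$-structure in the second part: one must verify that the isomorphism $G^{(1)}\simeq G$ carries Frobenius to the entrywise $p$-th power on the ambient $GL_n$, that this is compatible with the exponential embedding so that $exp_{\ul B^{(1)}}$ again factors through $G$, and that the naturality square is read with the twist on the correct side. By contrast, the reindexing underlying the precomposition rule and the continuity statements are routine once the exponential description of $V(G)$ from Definition \ref{str-exptype}(4) and Example \ref{linear} is in hand.
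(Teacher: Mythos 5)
Your proof is correct, and for the post-composition statement it takes a genuinely different (more structural) route than the paper. The precomposition part is handled the same way in both places — the paper simply calls it ``immediate from the definition of $\cE_{\ul B}$,'' while you spell out the reindexing and add a continuity remark ($pr_r\circ(-\circ F)$ factors through $pr_{r-1}$ and the shift embedding), which the paper does not bother to record. For post-composition, the paper stays entirely inside $GL_n$: it verifies by hand, on $A$-points, the single identity $F\circ(exp_B\circ F^s)=exp_{B^{(1)}}\circ F^{s+1}$ (a freshman's-dream computation, using that $1/i!\in\bF_p$ is fixed by the $p$-th power map), and then takes the product over $s$. You instead invoke the naturality square of relative Frobenius, $F\circ\psi=\psi^{(1)}\circ F_{\bG_a}$, identify the twist $(exp_{\ul B})^{(1)}=exp_{\ul B^{(1)}}$ by applying the $p$-th power to coefficients, and feed the result back into the precomposition rule of the first part. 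The algebraic content is identical — your twist identification is exactly the paper's entrywise computation in disguise — but your factorization makes explicit that post-composition reduces to pre-composition plus Frobenius twisting, whereas the paper's argument is self-contained and needs no appeal to general naturality. The side checks you flag ($B_s^{(1)}\in\cN_p(\fg)$ via the $\bF_p$-structure on $\fg$; preservation of $p$-nilpotency and commutativity under entrywise $p$-th power, i.e. $(B^{(1)})^i=(B^i)^{(1)}$; the identification of $F$ on $G^{(1)}\simeq G$ with the entrywise $p$-th power on the ambient $GL_n$) are precisely the points that need care, and the paper handles them implicitly by restricting the $GL_n$ computation to $G$.
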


 \begin{proof}
 The identification of $(-\circ F)$ is immediate from the definition of $\cE_{\ul B}$.
 
 Assume now that $G \hookrightarrow GL_n$ is an embedding of exponential type defined over $\bF_p$.
 In particular, $G$ is defined over $\bF_p$ so that we may view the Frobenius morphism as a self-map
 $F: G\to G$ which thus induces $(F \circ -): V(G) \to V(G)$ which in turn is identified
 as the restriction of $F: V(GL_n) \to V(GL_n)$.    
 
We verify that 
$$F \circ (exp_B \circ F^s)  \ = \ exp_{B^{(1)}} \circ F^{s+1} $$
 by establishing this equality as an equality of functors on  commutative $k$-algebra $A$:
 for any $a \in A$,  
$$F( (exp_B \circ F^s)(a)) \ = \ \big(1 + B\cdot a^{p^s} + B^2 \cdot 
a^{2p^s}/2 + \cdots + B^{p-1} \cdot a^{(p-1)p^s}/(p-1)!\big)^{(1)}$$
equals 
$$(exp_{B^{(1)}}\circ F^{s+1})(a) \ = \ 1 + B^{(1)} \cdot a^{p^{s+1}} + (B^{(1)})^2 \cdot a^{2p^{s+1}}/2 + \cdots + 
(B^{(1)})^{p-1} \cdot a^{(p-1)p^{s+1}}/(p-1)!.$$
Consequently, 
$$F \circ \exp_{\ul B} \circ F^s \ \equiv \ F \circ (\prod_{s\geq 0} exp_{B_s}\circ F^s) \ = \ \prod_{s \geq 0} exp_{B_s^{(1)}} \circ F^{s+1}\
\equiv  \ exp_{F(\ul B)} .$$
 \end{proof}
 
 \begin{cor}
As in Proposition \ref{twist}, let  $G$ be a linear algebraic group equipped with a structure of exponential
type.  Then $\cE_{\ul B} \in pr_r^{-1}(\{ 0 \})$ if and only if $\ul B = \ul B^\prime \circ F^r$ for some $\ul B^\prime
\in \cC_\infty(\cN_p(\fg))$.
 \end{cor}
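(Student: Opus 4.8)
The plan is to unwind the definitions and identify precisely when the restriction of a $1$-parameter subgroup $\cE_{\ul B}$ to the $r$-th Frobenius kernel $\bG_{a(r)}$ is trivial. Recall that $pr_r: V(G) \to V_r(G)(k)$ sends $\psi: \bG_a \to G$ to $\psi \circ i_r$, where $i_r: \bG_{a(r)} \hookrightarrow \bG_a$ is the canonical closed embedding. Under the isomorphism $C_r(\cN_p(\fg)) \stackrel{\sim}{\to} V_r(G)$ of Definition \ref{str-exptype}(4), the point $0 \in V_r(G)(k)$ corresponds to the zero tuple $(0,\ldots,0)$. So the fiber $pr_r^{-1}(\{0\})$ consists of exactly those $\cE_{\ul B}$ whose restriction to $\bG_{a(r)}$ is the trivial homomorphism.

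First I would write $\cE_{\ul B} = \prod_{s \geq 0}(\cE_{B_s} \circ F^s)$ for a (finitely supported) tuple $\ul B = (B_0, B_1, \ldots) \in \cC_\infty(\cN_p(\fg))$. The key observation is that $F^s \circ i_r$ is the \emph{trivial} homomorphism $\bG_{a(r)} \to \bG_a$ precisely when $s \geq r$: on coordinate algebras, $F^s$ sends $T \mapsto T^{p^s}$, and $T^{p^s}$ restricts to zero in $k[\bG_{a(r)}] = k[T]/(T^{p^r})$ exactly when $s \geq r$. Consequently each factor $\cE_{B_s} \circ F^s$ restricts trivially to $\bG_{a(r)}$ for $s \geq r$, while for $s < r$ the factor $\cE_{B_s} \circ F^s \circ i_r$ corresponds under the isomorphism to the contribution of $B_s$ in coordinate $s$. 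Since the factors commute (Definition \ref{str-exptype}(2)) and the isomorphism of Definition \ref{str-exptype}(4) is defined component-wise via (\ref{exponent}), the restriction $\cE_{\ul B} \circ i_r$ is trivial if and only if $B_0 = B_1 = \cdots = B_{r-1} = 0$.

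Next I would translate the condition $B_0 = \cdots = B_{r-1} = 0$ into the Frobenius-shift language of Proposition \ref{twist}. By the description there, $(-\circ F)$ acts on tuples by $(B_0, B_1, \ldots) \circ F = (0, B_0, B_1, \ldots)$; iterating $r$ times gives $\ul B^\prime \circ F^r = (0, \ldots, 0, B_0^\prime, B_1^\prime, \ldots)$ with the first $r$ entries equal to zero. Therefore the tuples whose first $r$ coordinates vanish are exactly those of the form $\ul B^\prime \circ F^r$ for some $\ul B^\prime \in \cC_\infty(\cN_p(\fg))$, namely $B_s^\prime = B_{s+r}$. Combining this with the previous paragraph yields the claimed equivalence: $\cE_{\ul B} \in pr_r^{-1}(\{0\})$ if and only if $\ul B = \ul B^\prime \circ F^r$ for some $\ul B^\prime \in \cC_\infty(\cN_p(\fg))$.

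I expect the only genuinely delicate point to be the assertion that $\cE_{\ul B} \circ i_r$ is trivial if and only if \emph{all} of $B_0, \ldots, B_{r-1}$ vanish, rather than merely that the product of the corresponding factors is trivial. This is where I would lean squarely on Definition \ref{str-exptype}(4): the map $\ul B \mapsto \cE_{\ul B} \circ i_r$ is an \emph{isomorphism} $C_r(\cN_p(\fg)) \stackrel{\sim}{\to} V_r(G)$, so injectivity immediately forces the truncated tuple $(B_0, \ldots, B_{r-1})$ to be zero whenever its image is the trivial subgroup. Without this isomorphism one could not rule out nontrivial cancellation among the factors, so invoking it is the crux of the argument; the rest is bookkeeping with Frobenius twists.
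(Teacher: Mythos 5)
Your proposal is correct and is essentially the argument the paper intends: the corollary is stated without proof as an immediate consequence of Proposition \ref{twist}, and your write-up supplies exactly the details that make it immediate — the vanishing of $F^s \circ i_r$ for $s \geq r$, the injectivity of $\ul B \mapsto \cE_{\ul B} \circ i_r$ from Definition \ref{str-exptype}(4) to rule out cancellation among the factors, and the identification of tuples with vanishing first $r$ entries as those of the form $\ul B^\prime \circ F^r$. Your emphasis on the injectivity point as the crux is well placed, since that is the only step that is not pure bookkeeping.
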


%%%%%%%%%%%%Sec 2%%%%%%%%%%%%%%%
%%%%%%%%%%%%%%%%%%%%%%%%%%%

\section{Action on rational modules at 1-parameter subgroups}

In this section, we begin consideration of rational $G$-modules and the role 1-parameter
subgroups of $G$ plays in determining their structure.   Much of this section is 
directed to presenting and justifying the formulation of the action of $G$ on a rational
$G$-module at a 1-parameter group (Definition \ref{Gaction})
for $G$ a linear algebraic group of exponential type.  Because this definition
might seem somewhat opaque at first, we treat first the case that $G = \bG_a$.   One 
important observation is Proposition \ref{bound} which asserts that for a given 
rational $G$-module $M$ there is some integer $r$ independent of the choice of 
1-parameter subgroup of $G$ such that this action only 
depends upon the first $r$ terms of the sequence defining a 1-parameter subgroup.  
We conclude this section with a brief investigation of the ``group algebra" of the 
linear group $GL_n$.

We shall find it convenient to have at hand various equivalent formulations of 
the structure of a rational $G$-module as discussed in Proposition \ref{act}.
Such a structure determines the structure given in (\ref{rat-act})
of a module over the ``group algebra" $kG$ of $G$
(also referred to as the ``hyperalgebra"  of $G$ as in  \cite{CPS} or the
``algebra of distributions" at the identify of $G$ and  denoted $Dist(G)$ as in  \cite{J},). 
 
We begin by recalling the definition of $kG$.

\begin{defn}
Let $G$ be a connected affine group scheme of finite type over $k$.  
Denote by $kG_{(r)}$ the finite dimensional $k$-algebra defined as the $k$-linear 
dual of $k[G_{(r)}]$ whose product is determined by the coproduct structure on $k[G]$.
The group algebra $kG$ of $G$ is the $k$-algebra
$$kG \ \equiv \ \varinjlim_r kG_{(r)}. $$
\end{defn}

\begin{prop}
\label{act}
Let $G$ be a connected affine group scheme of finite type over $k$ and $M$ a $k$-vector space.   
The following structures on $M$ are equivalent.
\begin{enumerate}
\item
The structure 
\begin{equation}
\label{DeltaM}
\Delta_M: M \ \to M\  \otimes k[G]
\end{equation}
 of a comodule for the 
coordinate algebra $k[G]$ of $G$.  
\item
The structure of  functorial (with respect to commutative
$k$-algebras $A$) $A$-linear group actions
\begin{equation}
\label{thetaM}
\Theta_{M,A}: G(A) \times (M\otimes A) \ \to (M\otimes A).
\end{equation}
\item
The structure of a $k[G]$-linear group action:
\begin{equation}
\label{spec}
\Theta_{k[G]}: G(k[G]) \times (M\otimes k[G]) \to (M\otimes k[G]).
\end{equation}
\item
For $M$ finite dimensional, equipped with a basis $\{ m_1,\ldots, m_n\}$, the
structure of a map 
\begin{equation}
\label{rhoM}
\rho_M: G \ \to \ GL_n
\end{equation} of group schemes (over $k$).
\end{enumerate}

The vector space $M$ equipped with one of the equivalent structures listed above is
said to be a rational representation of $G$, or (more briefly) a  {\it rational $G$-module}.
Such a structure determines a locally finite $kG$-structure on $M$ given by 
\begin{equation}
\label{rat-act}
kG \otimes M \to M, \quad (\phi \in kG, m \in M) \ \mapsto \ \sum_i \phi(f_i)m_i, \ {\text where} \
\Delta_M(m) = \sum_i m_i \otimes f_i.
\end{equation}
\end{prop}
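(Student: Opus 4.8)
The plan is to establish the equivalence of the four structures by exhibiting standard translations between them, treating each as a routine repackaging of the same underlying data, and then to verify that the displayed $kG$-action formula is well-defined and locally finite. The heart of the matter is the equivalence $(1)\Leftrightarrow(2)$, which is the functor-of-points reformulation of a comodule structure; once this is in hand, the remaining implications follow quickly.

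First I would prove $(1)\Leftrightarrow(2)$ using the Yoneda philosophy. A coaction $\Delta_M\colon M\to M\otimes k[G]$ is equivalent, by extension of scalars and the universal property of $k[G]$, to a natural family of maps on $A$-points. Explicitly, given $\Delta_M$ and a commutative $k$-algebra $A$, a point $g\in G(A)=\Hom_{k\text{-alg}}(k[G],A)$ acts on $m\otimes a\in M\otimes A$ by $\Theta_{M,A}(g,m\otimes a)=\sum_i m_i\otimes g(f_i)a$, where $\Delta_M(m)=\sum_i m_i\otimes f_i$. The comodule axioms (counit and coassociativity) translate precisely into the unit and associativity axioms for a group action, and the naturality in $A$ is automatic because everything is built from the algebra homomorphism $g$. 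Conversely, a functorial action is determined by its value on the universal point $\id\in G(k[G])=\Hom(k[G],k[G])$, which recovers $\Delta_M$; this is exactly the content of $(3)$, so the implication $(2)\Rightarrow(3)\Rightarrow(1)$ is a matter of specializing to $A=k[G]$ and then evaluating on the generic point.

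For $(4)$ I would argue that when $M$ is finite-dimensional with basis $\{m_1,\dots,m_n\}$, the coaction $\Delta_M$ is encoded by the matrix of coefficient functions $(a_{ij})\in k[G]$ defined by $\Delta_M(m_j)=\sum_i m_i\otimes a_{ij}$, and the comodule axioms say precisely that $g\mapsto(a_{ij}(g))$ is a homomorphism of group schemes $\rho_M\colon G\to GL_n$; conversely any such $\rho_M$ determines the $a_{ij}$ as pullbacks of the coordinate functions on $GL_n$. This is the classical dictionary between finite-dimensional representations and comodules, so I would simply invoke it. Finally, for the $kG$-structure in $(\ref{rat-act})$, I would observe that each $\phi\in kG$ lies in some $kG_{(r)}=(k[G_{(r)}])^\vee$, so $\phi$ is a linear functional that factors through the finite-dimensional quotient $k[G]\twoheadrightarrow k[G_{(r)}]$; the pairing $\phi(f_i)$ makes sense, the sum is finite since $\Delta_M(m)$ is a finite sum, and the algebra structure on $kG$ dual to the coproduct is exactly what makes $\sum_i\phi(f_i)m_i$ an associative action. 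Local finiteness follows because each $m$ lies in the finite-dimensional subcomodule spanned by the $m_i$ appearing in $\Delta_M(m)$, which is stable under the action of all of $kG$.

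The main obstacle I anticipate is not any single hard step but rather keeping the bookkeeping of the various dualities consistent, particularly the sign- and side-conventions in the colimit $kG=\varinjlim_r kG_{(r)}$ and the verification that the multiplication on $kG$ induced by the coproduct on $k[G]$ renders $(\ref{rat-act})$ genuinely associative rather than merely a well-defined linear map. I would handle this by checking associativity directly on a single $\phi\otimes\psi\otimes m$, reducing the identity $(\phi\psi)\cdot m=\phi\cdot(\psi\cdot m)$ to coassociativity of $\Delta_M$ together with the definition of the convolution product on $kG$. Everything else is a formal unwinding of definitions, so the proof is essentially a sequence of standard identifications with the only genuine content being the comodule-versus-action axiom matching in the first step.
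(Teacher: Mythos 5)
Your proposal is correct and follows essentially the same route as the paper's proof: the equivalences $(1)\Leftrightarrow(2)\Leftrightarrow(3)$ via the formula $\Theta_{M,A}(g, m\otimes 1) = (\mathrm{id}_M\otimes g)(\Delta_M(m))$ and recovery of $\Delta_M$ by evaluating at the universal point $\mathrm{id}\in G(k[G])$, the identification of $(4)$ through the matrix-coefficient dictionary (the paper phrases this as taking adjoints to get $G(A)\to \Aut_A(M\otimes A)\simeq GL_n(A)$ and, conversely, pulling back the defining comodule structure of $GL_n$ along $\rho_M$, which is the same as your $a_{ij}=\rho_M^*(X_{ij})$), and the same observation that local finiteness of the $kG$-action follows because $\Delta_M(m)$ is a finite sum. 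Your extra check of associativity of (\ref{rat-act}) via coassociativity is a detail the paper leaves implicit but is consistent with its argument.
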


\begin{proof}
Given the coproduct $\Delta_M$ as in (\ref{DeltaM}), the functorial pairings $\Theta_{M,A}$ (\ref{thetaM})  
are given by 
\begin{equation}
\label{pairing}
(x: k[G] \to A, m \otimes 1) \quad \mapsto \quad  (id_M \otimes x)(\Delta(m)).
\end{equation}
The pairing $\Theta_{k[G]}$ of  (\ref{spec}) is a special case of (\ref{thetaM}) with $A = k[G]$.  
On the other hand, the pairing $\Theta_{k[G]}$ of (\ref{spec}) determines  the pairing (\ref{DeltaM}) by
$$(\Delta_M)(m) \ = \ \Theta_{k[G]}(id \times m \otimes 1) \ \in \ M\otimes k[G].$$ 
In particular, the structures of (\ref{DeltaM}), (\ref{thetaM}), and (\ref{spec}) are equivalent.

The comodule structure on $M$ given by  (\ref{DeltaM}) determines the $kG$-action
on $M$ as made explicit in (\ref{rat-act}).   This $kG$-action on $M$ must be locally finite,
since the image under $\Delta_M$ of any finite 
dimensional subspace of $M$ is necessarily a finite dimensional subspace of $M \otimes k[G]$.
For $M$ finite dimensional equipped with a basis $\{ m_1,\ldots, m_n \}$, the adjoints of the 
pairings (\ref{thetaM}) are functorial (with respect to $A$) group homomorphisms
$G(A) \ \to \ Aut_A(M\otimes A) \simeq GL_n(A)$ which is equivalent to the structure 
$\rho_M: G \to GL_n$ of (\ref{rhoM}).

To complete the proof of the proposition, it suffices to assume that $M$ is finite dimensional and
show that the structure $\rho_M: G \to GL_n$ of (\ref{rhoM}) determines the coproduct $\Delta_M$
of (\ref{DeltaM}).   We do this by  identifying $M$ as a vector space with $V_n$
(the defining rational $GL_n$-module) and taking  $\Delta_M$ to be the 
composition $(1 \otimes \rho_M)^* \circ \Delta_{V_n}: V_n \to V_n \otimes k[GL_n] \to V_n \otimes k[G]$.
\end{proof}

For the example $G = \bG_a$, we proceed to identify explicitly the multiplication by elements of 
$k\bG_a$ on a rational $\bG_a$-module $M$. 

\begin{ex}
\label{ex-ga}
The group algebra $k\bG_a$ is given as
$$k\bG_a \ \equiv \ k[u_0,\ldots,u_i,\ldots]/(u_i^p) \ \subset \ k[T]^*$$
where $u_i$ applied to $f \in k[\bG_a] = k[T]$ reads off the coefficient of $T^{p^i}$.   
Here $(u_i^p)$ denotes the ideal of $k[u_0,\ldots,u_i,\ldots]$ generated by $\{ u_0^p, \ldots,u_i^p \ldots\}$.
On group algebras, we have  
$$i_{r*}: k\bG_{a(r)} = k[u_0,\ldots,u_{r-1}]/(u_i^p) \ \subset \ k[u_0,\ldots,u_n,\ldots]/(u_i^p) = k\bG_a,$$
\begin{equation}
\label{Fu}
F_*: k\bG_a \to k\bG_a, \quad  F_*(u_i) = u_{i-1}\ if \  i >0, \ F_*(u_0) = 0.
\end{equation}

Let  $\Delta_M: M \to M \otimes k[T]$ be the defining coaction of a rational $\bG_a$-module $M$.
For $m \in M$ and any $ v_j  \ =  \ \frac{u_0^{j_0} \cdots u_{r-1}^{j_{r-1}}}{j_0! \cdots j_{r-1}!}
\in k\bG_a$ (with $j = \sum_{\ell=0}^{r-1} j_\ell p^\ell, \ 0 \leq j_\ell < p$),
the action of $v_j \in k\bG_a = k[u_0,\ldots,u_i,\ldots]/(u_i^p)$ on $M$ is given by
\begin{equation}
\label{nabla}
\Delta_M(m) \ = \ \sum_{j\geq 0} v_j(m) \otimes T^j.
\end{equation}
Here,  the sum is finite for each $m \in M$.  If $M$ is finite dimensional, then 
$v_j(m)$ is non-zero only for finitely many values of $j$ as $m$ ranges over a 
basis for $M$; thus
$u_s$ acts trivially on $M$ for $s$ sufficiently large. The image of each $u_s$ in 
$End_k(M)$ is $p$-nilpotent, and the $u_s$'s pairwise commute; thus,
the $v_j$ are also $p$-nilpotent and pairwise commute.

The structure $\Theta_{k[T]}: \bG_a(k[T]) \times (M\otimes k[T]) \to (M\otimes k[T])$ of (\ref{spec})
is given by
\begin{equation}
\label{Theta}
\Theta_{k[T]}(T,m\otimes 1) \ = \ \sum_{j \geq 0} v_j(m)T^j.
\end{equation}
\end{ex}

For a $kG_{a(r)}$-module $M$, there is a natural choice of $p$-nilpotent operator on $M$
associated to a (infinitesimal) 1-parameter subgroup $\mu: \bG_{a(r)} \to \bG_a$ (see Definition \ref{infact}). 
After much experimentation, we have identified the following choice of $p$-nilpotent operator
for a rational $\bG_a$-module $M$ and a 1-parameter subgroup $\sigma_{\ul a}: \bG_a \to \bG_a$.
This definition leads to Definition \ref{Gaction} formulated for $G$ a linear algebraic group equipped
with a structure of exponential type.  We shall see in Proposition \ref{com-Ga}  how the action given 
in Definition \ref{Gaaction} is related to the action at infinitesimal 1-parameter subgroups $\bG_{a(r)} \to \bG_a$
as first considered in \cite{SFB2}.

\begin{defn}
\label{Gaaction}
Let  $M$ be a rational module for the additive group $\bG_a$ and $\sigma_{\ul a}: \bG_a \to \bG_a$
be a 1-parameter subgroup given by the (finite) sequence $\ul a = (a_0,\ldots,a_s,\dots)$.
The (nilpotent) action of $\bG_a$ on $M$ at $\sigma_{\ul a}: \bG_a \to \bG_a$
is defined to be the action of 
\begin{equation}
\label{gaact}
\sum_{s \geq 0} (\sigma_{a_s})_*(u_s) \ = \ \sum_{s \geq 0} a_s^{p^s} u_s \ \in \ k\bG_a.
\end{equation}
\end{defn}

The  equality $(\sigma_b)_*(u_s) \ = \ b^{p^s}u_s$ is confusing at first glance.  The reader can check this
as follows:  $\sigma_b: \bG_a \to \bG_a$ induces $\sigma_b^*: k[T] \to k[T], \ T \mapsto b\cdot T$.
Thus, $\sigma_b^*(T^{p^s}) = b^{p^s}\cdot T^{p^s}$, so that reading off the coefficient of  $T^{p^s}$
in the polynomial  $\sigma_b^*(f(T))$ is the same operation as reading  off  $b^{p^s}$ times 
the coefficient of  $T^{p^s}$ in the polynomial $f(T)$.

\begin{remark}
The action of $\bG_a$ on $M$ at $\sigma_{\ul a}$ is {\it not} given by the action on $\sigma_{\ul a}^*(M)$
of some naturally chosen element of $k\bG_a$.  Indeed, there does not seem to be a reasonable
choice of $\phi \in k\bG_a$ which would yield $(\sigma_{\ul a})_*(\phi)$ as a suitable alternative 
to $\sum_{s \geq 0} a_s^{p^s} u_s$.
\end{remark}

As we observed in Example \ref{ex-ga}, the action of $\bG_a$ on a finite dimensional rational
$\bG_a$-module $M$ involves only finitely many $u_s \in k\bG_a$. In other words, if $M$ is 
finite dimensional, then there exists some $r$ such that the action of $u_s$ on $M$ is trivial
for all $s \geq r$.  We next verify a similar statement for finite dimensional rational $G$ modules
whenever $G$ is equipped with a structure of exponential type.  In the terminology of 
Definition \ref{expdegree}, this proposition verifies that every finite dimensional 
rational $G$-module has bounded ``exponential degree".

\begin{prop}
\label{bound}
Let $G$ be a linear algebraic group equipped with a structure of exponential type and let
$M$ be a finite dimensional rational $G$-module.  Then there exists an integer $r$ such that 
$(\cE_B)_*(u_s)$ acts trivially on $M$ for all $s \geq r$, all $B \in \cN_p(\fg)$. 	
\end{prop}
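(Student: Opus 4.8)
The plan is to reduce the statement to a single uniform bound on $T$-degrees coming from the fact that $\cE$ is one morphism of schemes over the base $\cN_p(\fg)$, and then to specialize at each $k$-point $B$. First I would record the dictionary between the two descriptions of the operator in question. Since $\cE_B: \bG_a \to G$ is a homomorphism of group schemes, it induces an algebra map $(\cE_B)_*: k\bG_a \to kG$ dual to $\cE_B^*$, and the $kG$-module $M$ of (\ref{rat-act}) pulls back along $(\cE_B)_*$ to the $k\bG_a$-module $\cE_B^* M$. Hence the action of $(\cE_B)_*(u_s)$ on $M$ is exactly the action of $u_s$ on $\cE_B^* M$, and by Example \ref{ex-ga} (namely (\ref{nabla}) with $v_{p^s} = u_s$) this operator sends $m$ to the coefficient of $T^{p^s}$ in $\Delta_{\cE_B^* M}(m) = (\id \otimes \cE_B^*)\Delta_M(m)$. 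So it suffices to bound, uniformly in $B$, the $T$-degree of the polynomials appearing in $\Delta_{\cE_B^* M}$.

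Next I would produce that uniform bound. Because $M$ is finite dimensional, Proposition \ref{act}(4) provides $\rho_M: G \to GL_n$, and the pulled-back coaction is computed from $\rho_M \circ \cE_B: \bG_a \to GL_n$: writing $V_n$ for the defining module with $\Delta_{V_n}(m_j) = \sum_i m_i \otimes x_{ij}$, we get $\Delta_{\cE_B^* M}(m_j) = \sum_i m_i \otimes (\rho_M \circ \cE_B)^*(x_{ij})$. The crucial point is that these matrix entries are specializations of a single morphism of schemes. Indeed $\rho_M \circ \cE: \cN_p(\fg) \times \bG_a \to GL_n$ induces $(\rho_M \circ \cE)^*(x_{ij}) \in k[\cN_p(\fg)] \otimes k[T]$, a finite sum $\sum_d c_{ij,d} \otimes T^d$ with $c_{ij,d} \in k[\cN_p(\fg)]$. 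Letting $D$ be the largest power of $T$ occurring in any $(\rho_M \circ \cE)^*(x_{ij})$ — finite because there are finitely many entries, each a single element of $k[\cN_p(\fg)] \otimes k[T]$ — yields a bound independent of $B$.

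Finally I would specialize and conclude. For a $k$-point $B \in \cN_p(\fg)(k)$, evaluation gives $(\rho_M \circ \cE_B)^*(x_{ij}) = \sum_{d \leq D} c_{ij,d}(B) T^d$, a polynomial of $T$-degree at most $D$, whose coefficient of $T^{p^s}$ vanishes whenever $p^s > D$. Taking $r$ to be the least integer with $p^r > D$, it follows that for every $s \geq r$ and every $B \in \cN_p(\fg)(k)$ the operator reading off the $T^{p^s}$-coefficient is zero, i.e.\ $(\cE_B)_*(u_s)$ acts trivially on $M$.

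I expect the only real content to lie in the uniformity in $B$ established in the second paragraph: the conclusion would fail for a family of separately chosen $1$-parameter subgroups, and it is precisely the fact that $\cE$ (hence $\rho_M \circ \cE$) is a single morphism over the finite-type base $\cN_p(\fg)$ that forces a common degree bound $D$. Everything else is bookkeeping around the identification, from Example \ref{ex-ga}, of $u_s$ with the $T^{p^s}$-coefficient functional and the compatibility of $(\cE_B)_*$ with the module structures.
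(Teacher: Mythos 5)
Your proof is correct and takes essentially the same approach as the paper's: both arguments exploit the fact that $\cE$ is a single morphism of schemes over the finite-type base $\cN_p(\fg)$, so that composing with the (finite-dimensional) comodule structure lands in $M \otimes k[\cN_p(\fg)] \otimes k[T]$ with a uniform $T$-degree bound, after which one specializes at each $k$-point $B$ and reads off the $T^{p^s}$-coefficient. The only difference is cosmetic: the paper applies $\cE^*$ directly to the coaction $\Delta_M$, whereas you route through the matrix coefficients $(\rho_M \circ \cE)^*(x_{ij})$ of a chosen basis, and these are exactly the same elements of $k[\cN_p(\fg)]\otimes k[T]$.
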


\begin{proof}
Let $\cE: \cN_p(\fg) \times\bG_a \ \to G$ be the map giving $G$ the structure of exponential type
and consider the composition
$$\cE^* \circ \Delta_M: M \  \to \ M\otimes k[G] \ \to \ M \otimes k[\cN_p(\fg)] \otimes k[T]. $$
We choose $r$ such that the image of the composition lies in the subspace 
$M \otimes k[\cN_p(\fg)]\otimes k[T]_{<p^r}$, where $ k[T]_{<p^r}$ is
the subspace of $k[T]$ of polynomials of degree $< p^r$.  
Then for any $B \in \cN_p(\fg)(k)$ (i.e., any $k$-point
of $\cN_p(\fg)$), composition with 
evaluation at $B$ determines
$$\cE_B^* \ = \ ev_B \circ \cE^* \circ \Delta_M: M \to M \otimes k[T]_{<p^r}.$$
Since the action of $(\cE_B)_*(u_s)$ is given by composing $\cE_B^*$ with the linear map
$1\otimes u_s: M \otimes k[T] \to M$ (i.e., 
with evaluation at $T^{p^s}$), the proposition follows.
\end{proof}

\begin{ex}
\label{ex-poly}
Let $M$ be a finite dimensional {\it polynomial} $GL_n$ module of degree $d$ (see \cite{G});
thus, the comodule structure for $M$ has the form
$$\Delta_M: M \ \to \ M \otimes k[M_n]_d \ \subset \ M \otimes k[GL_n]$$
where $k[M_n]_d$ is the coalgebra of algebraic functions of degree $d$ on $M_n \simeq \bA^{n^2}$.  
The map
$exp: \bG_a \times \cN_p(\gl_n) \ \to GL_n$ extends to a map
$$exp: \bG_a \times M_n \ \to  \ M_n, \quad (s,A) \mapsto (1+ sA + \cdots + \frac{s^{p-1}}{(p-1)!}A^{p-1})$$
whose map on coordinate algebras  $exp^*: k[M_n] \to k[M_n] \otimes k[T]$ sends
$X_{i,j}$ to  a polynomial in $T$ (with coefficients in $k[M_n]$) of degree $< p$.  

Consequently, the composition 
$$exp^* \circ \Delta_M: M \ \to \ M \otimes k[M_n]_d \ \to \ M \otimes k[M_n] \otimes k[T]$$
when evaluated at any $A \in M_n$ has image contained in $M \otimes  k[T]_{\leq (p-1)d}$.
Thus, the action of $(exp_A)_*(v_j)$ on $M$ (given by the composition of $ev_A \circ exp^* \circ \Delta_M$ 
with reading off the coefficient of $T^j$) is trivial provided that $j > (p-1)d$.
This explicit bound for the vanishing of $(exp_A)_*(u_r)$ (namely, for all $r$ such that $p^r > (p-1)d$)
is stronger than the bound (\ref{constraint2}) given in a more general context at the beginning of Section \ref{sec:findim}.
The action of the product $[(exp_A)_*(v_j))]\cdot [(exp_A)_*(v_{j^\prime}))]$ 
is computed by applying $1 \otimes v_j \otimes v_{j^\prime}$
to the image of $\Delta_T \circ exp_A^* \circ \Delta_M: M \to M \otimes k[T] \otimes k[T]$.  This enables us to 
conclude that  the action of $(exp_A)_*(v_j)$ on $M$ has $(p-1)$-st power equal to 0 if $j > d$.
In particular, if $r$ satisfies $p^r > d$ (but  not necessarily satisfies $p^r > (p-1)d$), then the 
action of $(exp_A)_*(u_r)$ has $(p-1)$-st power 0.
\end{ex}

We require the following proposition to justify our definition in Definition \ref{Gaction} of the action 
of $G$ on $M$ at a 1-parameter subgroups $\cE_{\ul B}: \bG_a \to G$.

\begin{prop}
\label{commute}
Let $G$ be a linear algebraic group equipped with a structure of exponential type and let
$M$ be a finite dimensional rational $G$-module.   For any pair  $B_1, \ B_2$ of 
commuting elements of $\cN_p(\fg)$ and 
any pair $\phi_1,\phi_2: k[\bG_a] \to k$ of $p$-nilpotent elements of $k\bG_a$,
\ $(\cE_{B_1})_*(\phi_1) ,\  (\cE_{B_2})_*(\phi_2)$ \
are commuting, $p$-nilpotent elements of $kG$.
\end{prop}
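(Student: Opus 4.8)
The plan is to reduce everything to the case $G = \bG_a$ analyzed in Example \ref{ex-ga} and Definition \ref{Gaaction}, exploiting the exponential structure together with the commutativity hypothesis on $B_1, B_2$. First I would observe that each $\cE_{B_i}: \bG_a \to G$ is a homomorphism of group schemes, so the induced map on group algebras $(\cE_{B_i})_*: k\bG_a \to kG$ is a homomorphism of (cocommutative Hopf) algebras. Since $\phi_1, \phi_2 \in k\bG_a$ are $p$-nilpotent, and an algebra homomorphism carries $p$-th powers to $p$-th powers, each $(\cE_{B_i})_*(\phi_i)$ is again $p$-nilpotent in $kG$. This disposes of the $p$-nilpotency assertion immediately and without reference to commutativity.

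The substance is the commuting assertion, and here the hypothesis that $B_1, B_2 \in \cN_p(\fg)$ commute is exactly what Definition \ref{str-exptype}(2) is built to exploit. The key step is to package both one-parameter subgroups into a single homomorphism out of $\bG_a \times \bG_a$. By condition (2), $\cE_{B_1}$ and $\cE_{B_2}$ commute as maps $\bG_a \to G$, so the map $(s,t) \mapsto \cE_{B_1}(s)\cdot \cE_{B_2}(t)$ is a homomorphism of group schemes $\Psi: \bG_a \times \bG_a \to G$ (commutativity of the images is precisely what is needed for this product to be multiplicative). I would then pass to group algebras: $\Psi$ induces an algebra homomorphism $\Psi_*: k(\bG_a \times \bG_a) = k\bG_a \otimes k\bG_a \to kG$, and by construction $\Psi_*$ restricts on the two tensor factors to $(\cE_{B_1})_*$ and $(\cE_{B_2})_*$ respectively. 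Concretely, $(\cE_{B_1})_*(\phi_1) = \Psi_*(\phi_1 \otimes 1)$ and $(\cE_{B_2})_*(\phi_2) = \Psi_*(1 \otimes \phi_2)$.

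With this in hand, commutativity is formal: the elements $\phi_1 \otimes 1$ and $1 \otimes \phi_2$ obviously commute inside the tensor product algebra $k\bG_a \otimes k\bG_a$ (elements supported on distinct tensor factors always commute), and applying the algebra homomorphism $\Psi_*$ preserves this relation, yielding
\[
(\cE_{B_1})_*(\phi_1)\cdot(\cE_{B_2})_*(\phi_2) \ = \ \Psi_*\big((\phi_1\otimes 1)(1\otimes\phi_2)\big) \ = \ \Psi_*\big((1\otimes\phi_2)(\phi_1\otimes 1)\big) \ = \ (\cE_{B_2})_*(\phi_2)\cdot(\cE_{B_1})_*(\phi_1).
\]

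The one step requiring genuine care—and the place I expect the only real obstacle—is verifying that $\Psi$ really is a homomorphism of group schemes, i.e.\ that the pointwise commutativity furnished by Definition \ref{str-exptype}(2) upgrades to the scheme-theoretic statement that the product map is multiplicative. The remark following Definition \ref{str-exptype} addresses exactly this: condition (2) is equivalent to saying $\cE_{B_1}\bullet\cE_{B_2}: \bG_a \to G\times G$ factors through the diagonal $\diag: \bG_a \to \bG_a\times\bG_a$, which is the functorial (hence scheme-theoretic) form of commutativity. I would invoke that equivalence to legitimize forming $\Psi$ on all commutative $k$-algebras $A$ via $\Psi_A(s,t) = \cE_{B_1,A}(s)\cdot\cE_{B_2,A}(t)$, checking multiplicativity in each variable separately and then using commutativity of the images to combine them. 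Once $\Psi$ is known to be a group-scheme homomorphism, the identification $k(\bG_a\times\bG_a)\cong k\bG_a\otimes k\bG_a$ (dualizing the coordinate Hopf algebra, as in the definition of $kG$) and the functoriality of $M\mapsto kM$ close the argument.
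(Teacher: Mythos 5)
Your proposal is correct and is essentially the paper's own argument in lightly repackaged form: your homomorphism $\Psi = \bullet \circ (\cE_{B_1} \times \cE_{B_2})$ is exactly the top row of the paper's diagram (\ref{multdiag}), whose commutativity (via the interchange $\tau$) encodes the same use of Definition \ref{str-exptype}(2), and your appeal to functoriality of $\Psi_*$ on group algebras is precisely the paper's dual diagram (\ref{multdiagg}) composed with the functionals $\phi_1 \otimes \phi_2$. The $p$-nilpotency argument (algebra homomorphisms preserve $p$-th powers) is also identical to the paper's.
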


\begin{proof}
Since $(\cE_{B_i})_*: k\bG_a \to kG$ is an algebra homomorphism,   $(\cE_{B_i})_*(\phi_i)$ is
$p$-nilpotent whenever $\phi$ is $p$-nilpotent.

Consider the commutative diagram
\begin{equation}
\label{multdiag}
\begin{xy}*!C\xybox{%
\xymatrix{
\bG_a \times \bG_a  \ar[rr]^{\cE_{B_1} \times \cE_{B_2}} \ar[d]_{\tau}  & & G \times G \ar[r]^\bullet \ar[d]^\tau &  G\\
\bG_a \times \bG_a  \ar[rr]^{\cE_{B_2} \times \cE_{B_1}} & & G \times G \ar[ru]^\bullet }
}\end{xy}
\end{equation}
where $\tau$ is the interchange involution of $G \times G$ and $\bullet$ is the multiplication map of $G$.
Composing the (commutative) diagram of functions induced by (\ref{multdiag}) with the (commuting) 
functionals $\phi_1, \phi_2$ yields the following commutative diagram
\begin{equation}
\label{multdiagg}
\begin{xy}*!C\xybox{%
\xymatrix{
k[G] \ar[r]^-\Delta \ar[rd]^\Delta & k[G] \otimes k[G] \ar[rr]^-{\cE_{B_2}^* \times \cE_{B_1}^*} 
\ar[d]^{\tau^*} & & k[T] \times k[T] \ar[r]^-{\phi_2\otimes \phi_1} 
\ar[d]^{\tau^*}  & k \\
& k[G] \otimes k[G] \ar[rr]^-{\cE_{B_1}^* \times \cE_{B_2}^*} & & k[T] \times k[T] \ar[ru]_{\phi_1\otimes \phi_2} \\}
}\end{xy}
\end{equation}
The asserted commutativity is the statement of the equality of upper horizontal composition of (\ref{multdiagg})
and the composition involving the lower horizontal map.
\end{proof}

Propositions \ref{bound} and \ref{commute} enable us to conclude that $\sum_{s \geq 0} (\cE_{B_s})_*(u_s) $
is a well defined $p$-nilpotent operator whenever $\ul B = (B_0,\ldots,B_s,\ldots) \in \cC_\infty(\cN_p(\fg))$.

\begin{defn}
\label{Gaction}
Let  $M$ a rational module for the linear group $G$ provided with a structure of exponential type.  
The ($p$-nilpotent) action of $G$ on $M$ at $\cE_{\ul B}: \bG_a \to G$ is defined to be the action of
\begin{equation}
\label{gact}
\sum_{s \geq 0} (\cE_{B_s})_*(u_s) \ \in \ kG
\end{equation}
on $M$ for any $\ul B \in C_\infty(\cN_p(\fg))$.
\end{defn}
\vskip .1in

\begin{ex}
\label{Xij}
Fix  some $i\not= j, \ 1 \leq i, \ j \leq N$, and let  $\psi_{i,j}: \bG_a \to GL_N$ be the root subgroup
given by the map $(\psi_{i,j})^*: k[GL_N] \to k[T]$ with $(\psi_{i,j})_*(X_{s,s}) = 1$ and
$(\psi_{i,j})_*(X_{s,t}) = \delta_{i,s}\delta_{j,t}\cdot T$ for $s \not= t$.    Thus, 
$d\psi_{i,j}: \fg_a \to \gl_N$ sends $b$ to the 
the $N \times N$ matrix whose only non-zero entry is $b$ in the $(i,j)$-position.
Then $\psi_{i,j}$ is an embedding
of exponential type and one readily checks that 
$$(\psi_{i,j})_*(\sum_{s \geq 0} (\sigma_{a_s})_*(u_s )) \ 
= \ \sum_{s \geq 0} (exp_{(d\psi_{i,j})(a_s)})_*(u_s) \ \in \ kGL_N$$
for any $\ul a \in \bA^\infty$.  

Consequently, if $M$ is a rational $GL_N$ module and $\sigma_{\ul a}: \bG_a \to \bG_a$ 
is a 1-parameter subgroup, then the action of $\bG_a$ on $\psi_{i,j}^*(M)$ at $\sigma_{\ul a}$
as defined in Definition \ref{Gaaction}
equals the action of $GL_N$  on $M$ at $\psi_{i,j}\circ \sigma_{\ul a}: \bG_a \to GL_N$ 
as defined in Definition \ref{Gaction}.
\end{ex}

We have the following functoriality extending Example \ref{Xij}.

\begin{prop}
\label{push-prop}
Let $G, \ G^\prime$ be linear algebraic groups of exponential type and let 
 $f: G  \to G^\prime$ be a map of exponential type.
Then for any $\ul B \in C_\infty(\cN_p(\fg))$, 
\begin{equation}
\label{push}
f_*(\sum_{s \geq 0}  (\cE_{B_s})_*(u_s)) \ = \ \sum_{s \geq 0} (\cE^\prime_{(df)_*(B_s)})_*(u_s)
\end{equation}

In this situation, for any rational $G^\prime$ module $M$ and any 1-parameter subgroup $\cE_{\ul B}: \bG_a \to G$, 
the action of $G$ on $f^*(M)$ at $\cE_{\ul B}: \bG_a \to G$ equals the action of $G^\prime$ on $M$ at $f \circ \cE_{\ul B}
\ = \  \cE^\prime_{(df)_*(\ul B)}: \bG_a \to G^\prime$.
\end{prop}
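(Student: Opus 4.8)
The plan is to prove the two assertions of Proposition \ref{push-prop} in sequence, obtaining the statement about module actions as a formal consequence of the identity \eqref{push} on the group algebras. The central observation is that the entire statement is controlled by the defining diagram \eqref{diffl} of a map of exponential type, so the work reduces to tracking how $f_*: kG \to kG^\prime$ interacts with the exponential structures term by term.

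First I would establish the key identity, namely that for each fixed $s$ and each $B \in \cN_p(\fg)$ one has the equality of maps of group schemes $f \circ \cE_B = \cE^\prime_{(df)(B)}: \bG_a \to G^\prime$. This is exactly the commutativity of the square \eqref{diffl} restricted to the slice $\{B\} \times \bG_a$: the top row sends $(B,s) \mapsto \cE_B(s)$, then $f$ carries this into $G^\prime$, while the left-then-bottom route sends $(B,s) \mapsto (df)(B)$ in the first coordinate and computes $\cE^\prime_{(df)(B)}(s)$. Applying the covariant group-algebra functor $(-)_*$ to the equality $f \circ \cE_B = \cE^\prime_{(df)(B)}$ and evaluating on the generator $u_s \in k\bG_a$ yields
\begin{equation}
f_*\big((\cE_{B})_*(u_s)\big) \ = \ (\cE^\prime_{(df)(B)})_*(u_s) \ \in \ kG^\prime .
\end{equation}
Since $f_*$ is an algebra homomorphism, hence $k$-linear and continuous with respect to the colimit structure $kG = \varinjlim_r kG_{(r)}$, I would then sum over $s \geq 0$. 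For $\ul B \in C_\infty(\cN_p(\fg))$ only finitely many $B_s$ are nonzero, so the sum $\sum_{s \geq 0} (\cE_{B_s})_*(u_s)$ is a finite sum in $kG$ (and is a well-defined $p$-nilpotent operator by the remark following Proposition \ref{commute}); applying $f_*$ termwise gives \eqref{push}.

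For the second assertion, recall that the action of $G$ on $f^*(M)$ at $\cE_{\ul B}$ is by definition (Definition \ref{Gaction}) the action of $\sum_s (\cE_{B_s})_*(u_s) \in kG$, where $kG$ acts on $f^*(M)$ through the restriction homomorphism $f_*: kG \to kG^\prime$ followed by the $kG^\prime$-action on $M$. By \eqref{push} this composite operator equals the action of $\sum_s (\cE^\prime_{(df)_*(B_s)})_*(u_s) \in kG^\prime$ on $M$, which is precisely the action of $G^\prime$ on $M$ at the $1$-parameter subgroup $\cE^\prime_{(df)_*(\ul B)}$; and from the first part this $1$-parameter subgroup is $f \circ \cE_{\ul B}$, as claimed.

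The only genuine subtlety, and the step I would check most carefully, is the compatibility of $f_*$ with the pushforward of $1$-parameter subgroups at the level of \emph{products}, i.e.\ that $f \circ \cE_{\ul B} = \prod_{s} f \circ (\cE_{B_s} \circ F^s)$ really does equal $\cE^\prime_{(df)_*(\ul B)} = \prod_s (\cE^\prime_{(df)(B_s)} \circ F^s)$ as $1$-parameter subgroups of $G^\prime$. This requires that $f$, being a group homomorphism, commutes with the product \eqref{exponent} of group-scheme maps and with precomposition by Frobenius $F^s$; the former is automatic since $\cE_{\ul B}$ is defined as a product through the diagonal and $f$ is multiplicative, while the latter is functoriality of $F^s$. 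Everything else is a routine diagram chase of the kind carried out in Proposition \ref{commute} and Example \ref{Xij}, where the case $G = \bG_a$, $G^\prime = GL_N$, $f = \psi_{i,j}$ is exactly this proposition in a special case.
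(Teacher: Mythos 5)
Your proposal is correct and follows essentially the same route as the paper: both arguments rest on the equality $f \circ \cE_{\ul B} = \cE^\prime_{(df)_*(\ul B)}$ extracted from the defining square \eqref{diffl}, pushed forward to group algebras to give \eqref{push}, combined with the definitional fact that $\phi \in kG$ acts on $f^*M$ as $f_*(\phi)$ acts on $M$. Your version merely spells out in more detail (slice-by-slice use of \eqref{diffl}, termwise application of $f_*$, compatibility of $f$ with the product \eqref{exponent} and with Frobenius precomposition) what the paper's terse proof leaves implicit.
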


\begin{proof}
The equality (\ref{push}) follows from the equality 
\begin{equation}
\label{equa}
\cE^\prime_{(df)_*(\ul B)} = f \circ \cE_{\ul B}: \bG_a \to G^\prime
\end{equation}
which follows from (\ref{diffl}).
Equality \ref{equa} tells us that the action of $G^\prime$ on $M$ at $f \circ \cE_{\ul B}$ is the action of 
$\sum_{s \geq 0} (\cE^\prime_{(df)_*(B_s)})_*(u_s)$.
On the other hand, the action of any $\phi \in kG$ on $f^*M$ is the action of $f_*(\phi)$ on $M$ 
essentially by the definition of $f^*M$.  This establishes the second statement.
\end{proof}

Let $k \otimes_\phi M$ denote the base change of $M$ along the $p$-th power map $\phi: k \to k$.
The Frobenius twist $M^{(1)}$ of a rational representation $M$ of $G$ is the $k$ vector space
$k \otimes_\phi M$ with its natural $G^{(1)}$-structure; restricting along the Frobenius morphism $G \to G^{(1)}$
provides $M^{(1)}$ with the structure of a rational $G$-module.    If $G$ is defined over $\bF_p$, then 
we employ the  natural identification of $G^{(1)}$ with $G$ so that the Frobenius morphism 
becomes an endomorphism
$F: G \to G$; moreover, the action of $G^{(1)}$ on $M^{(1)}$ can be naturally identified with 
the pull-back along $F: G \to G$ of the given action of $G$ on $M$; we identify this pull-back of $M$ along
$F$ with the rational $G$-module  $M^{(1)}$.    The reader can find an exposition of such Frobenius structures in \cite{FS}.

As a companion to Proposition \ref{push-prop}, we have the following additional functoriality of our actions.
The reader should observe that the Frobenius morphism $F: G\to G$ is far from a map of exponential type; indeed,
$dF: \fg \to \fg$ is the 0-map.

\begin{prop}
\label{func-frob}
Let $i: G \hookrightarrow GL_n$ be an embedding of exponential type defined over $\bF_p$ and
let $F: G \to G$ be the Frobenius endomorphism.  Let $M$ be a rational $G$-module.
\begin{enumerate}
\item
For any $\ul B = (B_0,B_1,\ldots) \in C_\infty(\cN_p(\fg))$, 
\begin{equation}
\label{push-frob}
F_*\big(\sum_{s \geq 0}  (exp_{B_s})_*(u_s)\big) \ = \ \sum_{s \geq 1} (exp_{(B_s^{(1)})})_*(u_{s-1}).
\end{equation}
\item  
For any rational $G$-module $M$ with Frobenius twist $M^{(1)} = F^*(M)$, the action of
$G$ (identified with $G^{(1)}$) on $M^{(1)}$ at $exp_{\ul B}$ equals the action of $G$ 
on $M$ at $F\circ exp_{\ul B}$.
\end{enumerate}
Consequently, the action of $G$ on $M^{(1)}$ at $exp_{\ul B}$ is given by 
$\sum_{s \geq 1} (exp_{(B_s^{(1)})})_*(u_{s-1})$.
\end{prop}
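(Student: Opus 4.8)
The plan is to establish the three assertions in order, deriving the final ``Consequently'' sentence from part (1) together with the elementary behavior of pull-back modules. The whole computation rests on two facts proved earlier: the identity extracted from the proof of Proposition~\ref{twist} and the description of $F_*$ on $k\bG_a$ in (\ref{Fu}).

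For part (1), I would first use that $F_*: kG \to kG$ is the algebra map induced by the group homomorphism $F: G \to G$, so that it is additive and functorial with respect to composition of homomorphisms. This gives
$$F_*\Big(\sum_{s\geq0}(exp_{B_s})_*(u_s)\Big) \ = \ \sum_{s\geq0}(F\circ exp_{B_s})_*(u_s).$$
The key input is then the $s=0$ instance of the identity established inside the proof of Proposition~\ref{twist}, namely $F\circ exp_{B_s} = exp_{B_s^{(1)}}\circ F$, where the right-hand $F$ is now the Frobenius on $\bG_a$. Passing to the induced maps on group algebras and invoking (\ref{Fu}), which says $F_*(u_s)=u_{s-1}$ for $s\geq1$ and $F_*(u_0)=0$, I obtain $(F\circ exp_{B_s})_*(u_s) = (exp_{B_s^{(1)}})_*(u_{s-1})$ for $s\geq1$ while the $s=0$ term drops out. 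Summing yields (\ref{push-frob}).

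For part (2) I would invoke the defining property of a pull-back module already used in the proof of Proposition~\ref{push-prop}: for the homomorphism $F: G\to G$ and the rational $G$-module $M$, any $\phi\in kG$ acts on $F^*(M)=M^{(1)}$ exactly as $F_*(\phi)$ acts on $M$. Applying this to $\phi=\sum_{s\geq0}(exp_{B_s})_*(u_s)$, which by Definition~\ref{Gaction} is the operator giving the action of $G$ on $M^{(1)}$ at $exp_{\ul B}$, identifies that operator with the action of $F_*(\phi)$ on $M$; this is the content of the asserted comparison with $F\circ exp_{\ul B}$. The final statement is then immediate: substituting the formula from part (1) for $F_*(\phi)$ shows that the action of $G$ on $M^{(1)}$ at $exp_{\ul B}$ is that of $\sum_{s\geq1}(exp_{B_s^{(1)}})_*(u_{s-1})$ on $M$.

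The point requiring the most care — and the reason this cannot simply be quoted from Proposition~\ref{push-prop} — is that $F$ is \emph{not} a map of exponential type, since $dF=0$; thus the naive transformation rule of (\ref{push}) is unavailable and one must compute $F_*$ by hand. Concretely, the bookkeeping must correctly combine two opposite index shifts: the raising $B_s\mapsto B_s^{(1)}$ coming from the Frobenius twist on the nilpotent matrices (Proposition~\ref{twist}) and the lowering $u_s\mapsto u_{s-1}$ coming from $F_*$ on $k\bG_a$ in (\ref{Fu}). Keeping the Frobenius on the target $G$ (post-composition) carefully distinct from the Frobenius on the source $\bG_a$ throughout the identity $F\circ exp_{B_s}=exp_{B_s^{(1)}}\circ F$ is exactly where the argument is most easily mishandled, and verifying that the $s=0$ contribution genuinely vanishes is what produces the reindexing in (\ref{push-frob}).
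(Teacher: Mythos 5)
Your proposal is correct and follows essentially the same route as the paper's own proof: part (1) from the identity $F \circ exp_B = exp_{B^{(1)}} \circ F$ established in the proof of Proposition \ref{twist} combined with the formula (\ref{Fu}) for $F_*$ on $k\bG_a$ (with the $s=0$ term killed by $F_*(u_0)=0$), and part (2) plus the final statement from the fact, used in the proof of Proposition \ref{push-prop}, that $\phi \in kG$ acts on $F^*(M)$ as $F_*(\phi)$ acts on $M$. Your explicit caveat that Proposition \ref{push-prop} itself cannot be quoted because $dF = 0$ makes $F$ fail to be a map of exponential type is a point the paper leaves implicit when it cites that proposition.
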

 
 \begin{proof}
  The equality \ $F \circ (exp_B) \ = \ exp_{B^{(1)}} \circ F$ was established  
 in the proof of Proposition \ref{twist}.  This leads to the equalities
 $$F_*\big((exp_{B_s})_*(u_s)\big) \ = \ (exp_{B_s^{(1)}} \circ F)_*(u_s) \ = \ (exp_{B_s^{(1)}})_*(u_{s-1})$$
 which yields (\ref{push-frob}) thanks to (\ref{Fu}).
 
 Once we identify  the rational $G$-module $M^{(1)}$ with the pullback of $M$ 
 along $F: G \ \to \ G$, then the action of $G$ on $M^{(1)}$ at $exp_{\ul B}$ is post-composition 
 with the Frobenius morphism applied to the action of $G$ on $M$.    Thus, assertion (2) follows 
 from Proposition \ref{push-prop}.
\end{proof}

\section{Invariants for rational $\bG_a$-modules}
\label{sec:gatype}
 
In this section, we use the action of $\bG_a$ on a rational $\bG_a$-module 
at  1-parameter subgroups $\sigma_{\ul a}: \bG_a \to \bG_a$ (as introduced in Definition \ref{Gaaction})
to define invariants of rational $\bG_a$-modules.   Considering the special case $G = \bG_a$
is a good guide to Section \ref{sec:exptype}  in which we consider invariants for rational $G$-modules for
$G$  a linear algebraic group of exponential type.  The simplest and perhaps most useful
invariant is the support variety $V(\bG_a)_M$ of a finite dimensional rational $\bG_a$-module $M$ as defined
in Definition \ref{supportGa}.   Theorem \ref{Ga-items} presents some of the basic properties
of $M \mapsto V(\bG_a)_M$.  This support variety admits the refinement given in Definition \ref{def-ga-nonmax},
properties of which are presented in Theorem \ref{Ga-nonmax}.

Most of the properties of $V(\bG_a)_M$ are derived from corresponding properties for the 
support varieties for $M$ restricted to the Frobenius kernels $\bG_{a(r)} \subset \bG_a$.
Thus, a key (and confusing) comparison is made in Proposition \ref{com-Ga} between the ``linearization"
of actions at infinitesimal 1-parameter subgroups of $\bG_a$ obtained by ``twisting by $\lambda$"
the restrictions of a given 1-parameter subgroup $\sigma_{\ul a}: \bG_a \to \bG_a$ and the action at $\sigma_{\ul a}$
as defined in Definition \ref{Gaaction}.

Unlike the rest of this section which concerns the special case $G = \bG_a$, we recall the following
 definition of the (nilpotent) action 
at an infinitesimal 1-parameter subgroup $\mu: \bG_{a(r)} \to G$ for any affine group scheme
(see, for example \cite{SFB2}); in Section \ref{sec:exptype}, we shall refer back to this action in the 
case that $G$ is a linear algebraic group of exponential type. \

  If $V$ is a $k$ vector space of dimension $n$ and $\phi$ is a 
$k$-linear endomorphism of $V$, then we employ the notation 
$$JT(\phi,V) \ = \ \sum_{i=1}^p c_i[i], \quad \ \sum_i c_i \cdot i = n$$ 
for the Jordan type  of $\phi: V \to V$, indicating that the canonical Jordan form of $\phi$ consists
of $c_i$ blocks of size $i$.
 
\begin{defn}
\label{infact}
Let $G$ be an affine algebraic group scheme, $\mu: \bG_{a(r)} \to G$ be an infinitesimal 1-parameter
subgroup of $G$, and $M$ be a  $kG_{(r)}$-module (e.g, the restriction of a rational $G$-module).
Then the {\bf action of $G_{(r)}$} on $M$ at $\mu$ is defined to be the action of 
$\mu_*(u_{r-1}) \in kG_{(r)}$ on $M$, where $u_{r-1} \in k\bG_{a(r)}$ is the functional
$k[T]/T^{p^r} \to k$ which sends $f(T)$ to its coefficient of $T^{p^{r-1}}$.

The {\bf Jordan type} of a finite dimensional $G_{(r)}$-module $M$ at $\mu: \bG_{a(r)} \to G$, 
$JT_{G_{(r)},M}(\mu)$, is defined to be the Jordan type of the 
$p$-nilpotent operator $\mu_*(u_{r-1})$ on $M$,
$$JT_{G_{(r)},M}(\mu) \ \equiv \ JT(\mu_*(u_{r-1}),M).$$

The  {\bf support variety} (or, rank variety) of a $G_{(r)}$-module $M$ is defined to be the 
conical, (reduced) subvariety
$V_r(G_{(r)})_M \  \subset \ V_r(G_{(r)}) = V_r(G)$ consisting of those $ \mu: \bG_{a(r)} \to \bG_{a(r)}$
with the property that  $( \mu_*(u_{r-1}))^*(M)$ is {\it not free} as $(k[u_{r-1}]/u_{r-1}^p)$-module; 
if $M$ is finite dimensional, then $V_r(G_{(r)})_M \  \subset \ V_r(G)$ is closed \cite[6.1]{SFB2}.
\end{defn}

\vskip .1in

We now restrict to the special case that our linear algebraic group $G$ equals $\bG_a$ and refer the reader
to our initial discussion of rational $\bG_a$-modules in  Example \ref{ex-ga}.   Before we begin to introduce
invariants for rational $\bG_a$-modules using 1-parameter subgroups $\sigma_{\ul a}: \bG_a \to \bG_a$,
we describe in the following proposition how to realize rational $\bG_a$-modules through 1-parameter subgroups.

\begin{prop}  
\label{prop:constr}
Let $V_n$ be a vector space over $k$ of dimension $n$.
\begin{enumerate}
\item
Once a basis $\{ v_1,\ldots,v_n \}$ for $V$ is chosen, then a rational $\bG_a$-module structure
on $V$ is equivalent to a 1-parameter subgroup $exp_{\ul A} = \sum_{s \geq 0} exp_{A_s}\circ F^s: \bG_a \to GL_n$.
In this equivalence, the rational $\bG_a$-module is identified with $exp_{\ul A}^*(V)$,
the pull-back via $exp_{\ul A}$ of the defining representation of $GL_n$.
\item Granted this choice of basis for $V$, a $k\bG_a$-module structure on $V$ is specified
by requiring $u_s \in k\bG_a$ to act as $(exp_{\ul A})_*(u_s)$ on $V$.  Here, one views $(exp_{\ul A})_*(u_s) \in kGL_n$
as acting upon the defining $n$-dimensional vector space $V$ for $GL_n$.
\item
Given a $\bG_{a(r)}$-module structure on $V$ for some $r > 0$, there is a natural extension of this structure to
a structure of a rational $\bG_a$-module on $V$.
\end{enumerate}
\end{prop}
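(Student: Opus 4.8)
The plan is to prove the three assertions of Proposition \ref{prop:constr} by unpacking the equivalences established in Proposition \ref{act} and the explicit computations of Example \ref{ex-ga} and Example \ref{linear}, specialized to $G = \bG_a$. All three statements are about translating between the comodule/group-scheme-map description of a rational $\bG_a$-module and the $1$-parameter-subgroup description $exp_{\ul A}: \bG_a \to GL_n$. The key structural fact I would invoke is the isomorphism $C_\infty(\cN_p(\gl_n)) \xrightarrow{\sim} V(GL_n)$ (the union over $r$ of the isomorphisms of Example \ref{linear}), which identifies $1$-parameter subgroups of $GL_n$ with finite sequences of pairwise-commuting $p$-nilpotent matrices.

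\textbf{Assertion (1).} By Proposition \ref{act}(4), once a basis for $V_n$ is fixed, a rational $\bG_a$-module structure on $V_n$ is the same as a homomorphism of group schemes $\rho_M: \bG_a \to GL_n$, i.e.\ an element of $V(GL_n) = \Hom_{grp/k}(\bG_a, GL_n)$. First I would apply the isomorphism of Example \ref{linear} to write any such $\rho_M$ uniquely as $exp_{\ul A} = \prod_{s \geq 0}(exp_{A_s}\circ F^s)$ for some $\ul A \in C_\infty(\cN_p(\gl_n))$; finiteness of the sequence $\ul A$ follows because $\rho_M^*$ lands in polynomials of bounded degree (so the components $A_s$ vanish for $s \gg 0$). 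The identification of the module with $exp_{\ul A}^*(V_n)$ is then essentially a restatement of how $\rho_M$ recovers $\Delta_M$ in the proof of Proposition \ref{act} (via $(1\otimes\rho_M)^*\circ\Delta_{V_n}$).

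\textbf{Assertion (2).} Here I would use (\ref{rat-act}): the comodule structure $\Delta_M$ determines the $k\bG_a$-action, and by the formula (\ref{nabla}) of Example \ref{ex-ga} the element $u_s \in k\bG_a$ acts by reading off the coefficient of $T^{p^s}$ in $\Delta_M$. Combining this with the identification $\Delta_M = (1\otimes exp_{\ul A})^*\circ\Delta_{V_n}$ from assertion (1), the action of $u_s$ on $V_n$ is precisely the pushforward $(exp_{\ul A})_*(u_s) \in kGL_n$ acting on the defining representation; this is a direct functoriality computation on group algebras, not a genuine obstacle. \textbf{Assertion (3)} is the cleanest: a $\bG_{a(r)}$-module structure corresponds (via Proposition \ref{act} applied to $\bG_{a(r)}$, or directly via Example \ref{linear} restricted along $i_r$) to an element of $V_r(GL_n) \cong C_r(\cN_p(\gl_n))$, i.e.\ a truncated tuple $(A_0,\ldots,A_{r-1})$; I would then extend this tuple by zeros to an element of $C_\infty(\cN_p(\gl_n))$ and invoke assertion (1) to produce the desired rational $\bG_a$-module, whose restriction to $\bG_{a(r)}$ recovers the original structure by construction.

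\textbf{The main obstacle} I anticipate is bookkeeping rather than mathematics: one must be careful that the module structure in assertion (1) genuinely depends only on the sequence $\ul A$ and is independent of the chosen bound $r$ (i.e.\ that padding with zeros is harmless), and that the ``confusing twist of indexing'' between $u_s$ and $A_s$ — visible already in Definition \ref{Gaaction} and the remark following it — is handled consistently. Since the author has isolated the delicate indexing into the explicit $u_s \leftrightarrow T^{p^s}$ correspondence of Example \ref{ex-ga} and the Frobenius formulas of Proposition \ref{twist}, the proof reduces to citing those facts and checking compatibility, so I expect it to be short.
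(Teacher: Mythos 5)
Your proposal is correct, and for assertions (1) and (2) it follows the paper's proof exactly: (1) is Proposition \ref{act}(4) combined with the classification of 1-parameter subgroups of $GL_n$ from Example \ref{linear}, and (2) is the observation that the action of $u_s$ on $exp_{\ul A}^*(V)$ equals the action of $(exp_{\ul A})_*(u_s)$ on $V$, read through the description of $k\bG_a$ in Example \ref{ex-ga}. For assertion (3) you take a mildly different route: you classify the given $\bG_{a(r)}$-structure as a tuple $(A_0,\ldots,A_{r-1}) \in C_r(\cN_p(\gl_n))$ via Example \ref{linear}, pad it with zeros, and invoke assertion (1); the paper instead argues basis-free, either by pulling the module structure back along the algebra retraction $k\bG_a \to k\bG_{a(r)}$ that sends $u_s$ to $0$ for $s \geq r$ and fixes $u_s$ for $s < r$, or by extending the comodule structure along the coalgebra section $\tau_r: k[T]/T^{p^r} \to k[T]$. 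The two constructions agree --- your zero-padding of $\ul A$ is precisely the paper's retraction transported through the isomorphism $C_r(\cN_p(\gl_n)) \simeq V_r(GL_n)$ --- but the paper's formulation buys independence of the choice of basis and applies verbatim to $\bG_{a(r)}$-modules of arbitrary dimension, whereas yours is tied to the finite-dimensional matrix picture (harmless here, since $V$ is assumed $n$-dimensional, though strictly you should remark that the resulting rational $\bG_a$-structure does not depend on the basis chosen, e.g.\ by comparing with the retraction description).
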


\begin{proof}
The first assertion is that of Proposition \ref{act}(4).  The second assertion follows from the
observation that the action of $u_s$ on $exp_{\ul A}^*(V)$ equals the action of 
$(exp_{\ul A})_*(u_s)$ on $V$, together with the identification of $k\bG_a$ given in 
Example \ref{ex-ga}.  

To extend a $\bG_{a(r)}$-structure on $V$to a rational $\bG_a$-module structure,
we use  the evident splitting of group algebras
$k\bG_a = k[u_0,\ldots,u_{r-1}]/(u_i^p) \ \to k[u_0,\ldots,u_n,\ldots] /(u_i^p) \ = \ k\bG_a$ which sends
$u_s \in k\bG_a$ to 0 if $s\geq r$ and to $u_s$ for $s < r$.

Alternatively, given a $k[G_{a(r)}]$-comodule structure $\Delta_V: V \to V\otimes k[\bG_{a(r)}]$,
we construct a $k[\bG_a]$-comodule structure on $V$ as follows.  
Denote by $\tau_r: k[\bG_{a(r)}] = k[T]/T^{p^r} \to k[T] = k[\bG_a]$ 
the map of coalgebras defined by sending $\ol{f(T)}  \in k[T]/T^{p^r}$ to the unique polynomial $f(T)$ of 
degree $< p^r$ whose reduction modulo $(T^{p^r})$ equals $\ol{f(T)}$.  Then
$$\tau_r \circ \Delta_V: V \ \to \ V \otimes k[\bG_{a(r)}] \ \to \ V \otimes k[\bG_a]$$
is an extension of $\Delta_V$ to a $\bG_a$-comodule structure on $V$.
\end{proof}

One simple corollary of Proposition \ref{prop:constr} is the following.

\begin{cor}
\label{wild}
The category of finite dimensional rational $\bG_a$-modules is wild.
\end{cor}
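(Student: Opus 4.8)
The plan is to deduce wildness from the structure theory of Proposition \ref{prop:constr} by realizing the module category of a known wild algebra as a full subcategory. By Proposition \ref{prop:constr}(2), a finite dimensional rational $\bG_a$-module structure on $V$ is the same datum as a family of pairwise commuting $p$-nilpotent operators $u_0, u_1, \ldots$ on $V$ (all but finitely many equal to $0$), and a morphism of such modules is exactly a $k$-linear map commuting with every $u_s$. First I would restrict to those modules on which $u_s$ acts as $0$ for all $s \geq 3$. By Proposition \ref{prop:constr}(3) (extension by zero, via the splitting $u_s \mapsto 0$ for $s \geq 3$) these are precisely the modules obtained from finite dimensional $\bG_{a(3)}$-modules; and since a $k$-linear map between two such modules commutes with all of $k\bG_a$ if and only if it commutes with $u_0, u_1, u_2$, this extension functor is a full, faithful, exact embedding of the finite dimensional $k\bG_{a(3)}$-modules as a full subcategory of the finite dimensional rational $\bG_a$-modules.

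Next I would identify the algebra $k\bG_{a(3)}$. By Example \ref{ex-ga} one has $k\bG_{a(3)} = k[u_0,u_1,u_2]/(u_0^p,u_1^p,u_2^p)$, and the assignment $u_i \mapsto g_i - 1$ gives an isomorphism of $k$-algebras $k\bG_{a(3)} \cong k[(\Zp)^3]$ with the group algebra of the rank-$3$ elementary abelian $p$-group: in characteristic $p$ one has $(g_i - 1)^p = g_i^p - 1 = 0$, and the classes of $g_i - 1$ generate freely modulo these relations. Thus the finite dimensional $k[(\Zp)^3]$-modules sit inside the finite dimensional rational $\bG_a$-modules as a full subcategory through a full, faithful, exact functor.

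Finally I would invoke the classification of representation type of group algebras: the group algebra of an elementary abelian $p$-group of rank at least $3$ is of wild representation type for every prime $p$. (I would use rank $3$ rather than rank $2$ precisely to cover $p = 2$, where rank $2$ yields the tame algebra $kV_4$.) Hence there is a representation embedding of the finite dimensional modules over the free algebra $k\langle x, y\rangle$ into the finite dimensional $k[(\Zp)^3]$-modules; composing it with the full, faithful, exact embedding constructed above produces a representation embedding into the finite dimensional rational $\bG_a$-modules, which is the assertion of wildness.

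The routine content is merely the algebra isomorphism and the citation of the wild/tame dichotomy; the only genuinely delicate point I expect is verifying that the extension-by-zero functor is \emph{full} — i.e., that passing from $\bG_{a(3)}$-modules to rational $\bG_a$-modules creates no new morphisms — which is exactly why I emphasize that morphisms of rational $\bG_a$-modules are precisely $k\bG_a$-module maps and that $u_s$ for $s \geq 3$ acts as zero on both source and target, so that commuting with $k\bG_a$ reduces to commuting with $u_0, u_1, u_2$.
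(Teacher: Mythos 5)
Your proof is correct and follows essentially the same route as the paper: extend $\bG_{a(r)}$-modules to rational $\bG_a$-modules via Proposition \ref{prop:constr}(3), identify $k\bG_{a(r)}$ with the group algebra of an elementary abelian $p$-group, and invoke wildness of the latter (your uniform choice $r=3$ matches the paper's case $p=2$ and is harmless for odd $p$, where $r=2$ already suffices). The only difference is that you spell out the fullness of the extension-by-zero embedding, a point the paper leaves implicit in its appeal to Proposition \ref{prop:constr}(3).
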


\begin{proof}
The group algebra $k\bG_{a(r)}$ is isomorphic as an algebra to $kE$, where $E$ is an
elementary abelian $p$-group of rank $r$.  Thus, the category of finite dimensional 
$k\bG_{a(r)}$-modules is isomorphic to the category of finite dimensional $kE$-modules.
The latter is well known to be wild if $ r \geq 2$ with $p$ odd, $r \geq 3$ for $p=2$. 
Proposition \ref{prop:constr}(3) now implies that the category of finite dimensional
$\bG_a$-modules is wild.
\end{proof}

Our goal is to investigate a given finite dimensional rational $\bG_a$-module $M$ in terms of its 
behavior along 1-parameter subgroups $\sigma_{\ul a}: \bG_a \to \bG_a$.  We first recall the
following identification of the ``linearization of the action" of the  restriction along an
infinitesimal 1-parameter subgroup of $\bG_a$.

\begin{prop} (Suslin-Friedlander-Bendel \cite[6.5]{SFB2})
\label{linapprox}
Consider the infinitesimal 1-parameter subgroup $\sigma_{\ul a}: \bG_{a(r)} \to \bG_a$
given by the $r$-tuple $(a_0,\ldots,a_{r-1}) \in \bA^r$ as in Example \ref{additive}.  Then
\begin{equation}
\label{mu}
\sigma_{\ul a *}(u_{r-1}) \ = \  
\sum_{i=0}^{r-1} a_{r-1-i}^{p^i}u_{i} + \text{higher order terms},
\end{equation}
where the higher order terms in (\ref{linapprox}) are those which are not linear in the $\{ u_i \}$'s.
\end{prop}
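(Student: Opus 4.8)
The plan is to reduce the identity to a direct computation of a single linear functional, carried out in the divided-power presentation of $k\bG_a$ recalled in Example~\ref{ex-ga}. Recall from there that $k\bG_a$ has $k$-basis $\{v_j\}_{j\geq 0}$, where $v_j$ reads off the coefficient of $T^j$ on $k[\bG_a]=k[T]$; thus $\{v_j\}$ is the basis of $(k[T])^*$ dual to the monomial basis $\{T^j\}$, and any $\phi\in k\bG_a$ is recovered as $\phi=\sum_j \phi(T^j)\,v_j$. Writing $j=\sum_\ell j_\ell p^\ell$ in base $p$ (with $0\le j_\ell<p$), we have $v_j=\prod_\ell u_\ell^{j_\ell}/j_\ell!$, so $v_j$ is \emph{linear} in the $u_\ell$'s precisely when $j$ is a power of $p$, in which case $v_{p^i}=u_i$; every other $v_j$ is a monomial of degree $\ge 2$, hence one of the ``higher order terms.'' So isolating the linear part of $\sigma_{\ul a *}(u_{r-1})$ amounts to computing the coefficients $\sigma_{\ul a *}(u_{r-1})(T^{p^i})$ for $0\le i\le r-1$.

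First I would record that the pushforward $\sigma_{\ul a *}\colon k\bG_{a(r)}\to k\bG_a$ on distributions is, by definition, dual to the comorphism $\sigma_{\ul a}^*\colon k[T]\to k[T]/(T^{p^r})$, which by Example~\ref{additive} sends $T\mapsto \sum_{s=0}^{r-1}a_sT^{p^s}$. Hence, using that $u_{r-1}$ reads off the coefficient of $T^{p^{r-1}}$ (Definition~\ref{infact}),
\begin{equation*}
\sigma_{\ul a *}(u_{r-1})(T^j)\ =\ u_{r-1}\big(\sigma_{\ul a}^*(T^j)\big)\ =\ \big[\text{coefficient of } T^{p^{r-1}} \text{ in } \big(\sum_{s=0}^{r-1}a_sT^{p^s}\big)^{j}\big].
\end{equation*}

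For the linear terms $j=p^i$ I would then invoke the Frobenius (``freshman's dream'') identity in characteristic $p$,
\begin{equation*}
\big(\sum_{s=0}^{r-1}a_sT^{p^s}\big)^{p^i}\ =\ \sum_{s=0}^{r-1}a_s^{p^i}T^{p^{s+i}},
\end{equation*}
whose coefficient of $T^{p^{r-1}}$ is nonzero only for $s+i=r-1$, i.e. $s=r-1-i$, yielding $a_{r-1-i}^{p^i}$. Since distinct $v_j$ do not interfere, the coefficient of $u_i=v_{p^i}$ in $\sigma_{\ul a *}(u_{r-1})$ is exactly $a_{r-1-i}^{p^i}$, giving the linear part $\sum_{i=0}^{r-1}a_{r-1-i}^{p^i}u_i$; all remaining contributions come from $v_j$ with $j$ not a power of $p$ and are genuinely higher order.

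I expect the only real obstacle to be bookkeeping rather than mathematics: one must be scrupulous about the direction of the induced map (the pushforward of distributions is dual to the pullback of functions) and about the index/exponent reversal. The simultaneous appearance of the reversed index $r-1-i$ and the Frobenius exponent $p^i$ is precisely the ``confusing'' shift flagged elsewhere in the paper, and it is produced entirely by extracting the $T^{p^{r-1}}$-coefficient from the $p^i$-th power $\sum_s a_s^{p^i}T^{p^{s+i}}$. Verifying that this is the \emph{only} linear contribution — and in particular that the higher divided powers $v_j$, including those of the form $u_\ell^c/c!$ with $2\le c<p$, are non-linear — is the step where care is most needed.
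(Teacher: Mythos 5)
Your proof is correct. Note that the paper gives no argument of its own here---the proposition is simply quoted from \cite[6.5]{SFB2}---and your computation (dualizing the comorphism $T \mapsto \sum_{s} a_s T^{p^s}$, evaluating $\sigma_{\ul a *}(u_{r-1})$ on the monomials $T^{p^i}$ via the Frobenius identity, and using that $u_i = v_{p^i}$ while every other $v_j$ is a divided-power monomial of degree $\geq 2$ in the $u_\ell$'s) is precisely the standard verification underlying that citation, so there is nothing to repair or to compare beyond bookkeeping.
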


We point out that non-linear terms do occur in (\ref{mu}) for $r \geq 2$: 
as observed in \cite[6.5]{SFB2}, $\sigma_{\ul a}: \bG_{a(2)} \to \bG_a$ has the property that the expression
for $\sigma_{\ul a *}(u_2)$ is a sum of terms including non-zero multiples of $u_0^i u_1^{p-i}$
for each $i, \ 1\leq i < p$ associated to reading off the coefficients of $t^{p^2}$ in the expressions for 
$\sigma_{\ul a}^*(t^i)$.

 The ``reversal" of indices of the coefficients
 occurring in (\ref{mu}) when compared to the action of (\ref{gaact})
suggests the introduction of the following operation on sequences.

\begin{defn}
\label{lambda}
We consider the morphism
$$\lambda_r: \bA^\infty \to \bA^r(k), \quad (a_0,\ldots,a_{r-1},a_r,\ldots) \mapsto (a_{r-1},a_{r-2},\ldots,a_0);$$
we also let  $\lambda_r$  denote the associated map $\lambda_r: V(\bG_a) \ \to \ V_r(\bG_a)(k)$
sending $\sigma_{\ul a}: \bG_a \to \bG_a$ to $\sigma_{ \lambda(\ul a)}: \bG_a \to \bG_a$.

For each $c \geq 0$ we consider the morphisms
$$\lambda_{r+c,r}: V_{r+c}(\bG_a) \ \to \ V_r(\bG_a), \quad \sigma_{\ul a} \mapsto \sigma_{\lambda_r(\ul a)}.$$

For any $\ul a = (a_0,\ldots,a_{r+c-1}) \in \bA^{r+c}, \ c \geq 0$, we set $q_{r+c,r}(\ul a) = (a_c,,\ldots, a_{r+c-1})$
and consider the morphisms
$$q_{r+c,r}: V_{r+c}(\bG_a) \ \to \ V_r(\bG_a), \quad \sigma_{\ul a} \mapsto \sigma_{q_{r_c,r}(\ul a)}.$$
\end{defn}

\vskip .2in

\begin{remark}
\label{invlim}
For any $r> 0, \ c\geq 0$, we have the following commutative diagram
\begin{equation}
\label{limsq}
\begin{xy}*!C\xybox{%
\xymatrix{ V(\bG_a) \ar[d]_{pr_{r+c}} \ar[r]^= & V(\bG_a) \ar[d]^{pr_r} \\
V_{r+c}(\bG_a)(k) \ar[d]_-{\lambda_{r+c,r+c}} \ar[r]^-{pr_{r+c,r}} & V_r(\bG_a)(k) \ar[d]^{\lambda_{r,r}} \\
V_{r+c}(\bG_a)(k) \ar[r]_-{q_{r+c,r}} & V_r(\bG_a)(k)}
}\end{xy}
\end{equation}
the composition of whose left vertical arrows is $\Lambda_{r+c}$ and the composition of whose right
vertical arrows is $\lambda_r$.
Consequently, the space $V(\bG_a) \ \subset \  \varprojlim_r V_r(G)(k)$ which maps into the inverse limit of 
$\{ pr_{r+c,r}: V_{r+c}(\bG_a)(k) \to V_r(\bG_a)(k) \}$  through maps $pr_r: V(\bG_a) \to V_r(\bG_a)(k)$ can also 
be identified as a subspace (with the subspace topology) of the inverse limit of 
$\{ q_{r+c,r}: V_{r+c}(\bG_a)(k) \to V_r(\bG_a)(k) \}$ through maps $\lambda_r: V(\bG_a) \to  V_r(\bG_a)(k)$.

Moreover, we observe that
\begin{equation}
\label{pr=}
pr_r = \lambda_{r+c,r} \circ \lambda_{r+c}: V(\bG_a) \to V_r(\bG_a)(k)
\end{equation}
and that $\lambda_{r,r}$ is an involution.
\end{remark}

\begin{defn}
\label{finitedegree}
A rational $\bG_a$-module $M$ is said to have degree $< p^r$ if the coaction
$\Delta_M: M \to M\otimes k[T]$ satisfies the condition that $\Delta_M(M) \subset M\otimes k[T]_{< p^r}$.

In particular, any finite dimensional rational $\bG_a$-module $M$ has degree $< p^r$ 
for sufficiently large $r$.
\end{defn}

Since (generalized) support varieties are determined by the linearizations of $p$-nilpotent operators (in
the sense of \cite[1.13]{FPS}),
the following proposition demonstrates a promising connection between actions at 1-parameter subgroups
and infinitesimal 1-parameter subgroups.

\begin{prop}
\label{com-Ga}
Let $M$ be a rational $\bG_a$-module of degree $< p^r$.
 Then for any $\ul a  \in \bA^\infty$,
the action of $\bG_a$ on $M$ at $\sigma_{\ul a}: \bG_a \to \bG_a$ (see (\ref{gaact}) ) equals the 
{\it linearization} of the action of $\bG_{a(r)}$ on $M$ at 
$ \sigma_{\lambda_r(\ul a)}: \bG_{a(r)} \to \bG_a$.

In other words, for $\ul b = (b_0,\ldots,b_{r-1}) \ = \ \lambda_r(\ul a)$,  the action of
$\sum_{s \geq 0} a_s^{p^s} u_s$ on $M$ equals the action of  $\sum_{i = 0}^{r-1} b_{r-1-i}^{p^i}u_i$,
which is the sum of the linear terms of  $\sigma_{\ul b^*}(u_{r-1})$ as in (\ref{mu}).
\end{prop}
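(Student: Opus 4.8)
The plan is to prove this by direct computation, comparing the two explicit formulas on both sides. The left-hand action at $\sigma_{\ul a}$ is given by (\ref{gaact}) as $\sum_{s \geq 0} a_s^{p^s} u_s$, while the right-hand side requires extracting the linear part of $\sigma_{\lambda_r(\ul a) *}(u_{r-1})$ via Proposition \ref{linapprox}. So the heart of the argument is to show these two operators coincide on $M$, and the whole subtlety is bookkeeping with the index reversal built into $\lambda_r$.

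First I would write $\ul b = \lambda_r(\ul a) = (b_0,\ldots,b_{r-1})$, so that by Definition \ref{lambda} we have $b_j = a_{r-1-j}$ for $0 \leq j \leq r-1$. Applying Proposition \ref{linapprox} to the infinitesimal 1-parameter subgroup $\sigma_{\ul b}: \bG_{a(r)} \to \bG_a$ gives
\begin{equation*}
\sigma_{\ul b *}(u_{r-1}) \ = \ \sum_{i=0}^{r-1} b_{r-1-i}^{p^i} u_i \ + \ \text{(higher order terms)}.
\end{equation*}
The linearization (in the sense of \cite[1.13]{FPS}) discards the higher order terms, so I must identify $\sum_{i=0}^{r-1} b_{r-1-i}^{p^i} u_i$ with $\sum_{s\geq 0} a_s^{p^s} u_s$. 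Substituting $b_{r-1-i} = a_{r-1-(r-1-i)} = a_i$ into each coefficient, the $i$-th term becomes $a_i^{p^i} u_i$, which matches the left-hand side term-by-term. This is the key cancellation: the reversal in $\lambda_r$ exactly undoes the reversal of indices that Proposition \ref{linapprox} introduces between the coefficients and the $u_i$'s.

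The remaining point to address carefully is why the two higher-order-term contributions may be ignored, i.e. why it suffices to compare linearizations rather than the full operators. Here I would invoke the hypothesis that $M$ has degree $< p^r$: by Definition \ref{finitedegree}, $\Delta_M(M) \subset M \otimes k[T]_{<p^r}$, so on $M$ the operators $u_s$ act trivially for $s \geq r$ (as in Example \ref{ex-ga}). This guarantees that the action at $\sigma_{\ul a}$ genuinely only involves $u_0,\ldots,u_{r-1}$, matching the range of indices appearing in the linearization of $\sigma_{\ul b *}(u_{r-1})$, so that the two sides are comparable as operators supported on the same finite set of $u_i$'s, and the claimed equality of linearizations is precisely the term-by-term identity established above.

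I expect the main obstacle to be conceptual rather than computational: making precise the sense in which the left-hand action \emph{is} the linearization of the right-hand infinitesimal action, and confirming that the degree bound $< p^r$ is exactly what forces the nonlinear terms in (\ref{mu}) not to interfere. The index-reversal identity itself is a one-line substitution once $\lambda_r$ is unwound, but I would want to state cleanly that linearization means discarding all monomials in the $u_i$ of total degree $\geq 2$, and verify that the hypothesis on $M$ ensures no such monomial contributes a genuinely new operator on $M$ beyond what the degree constraint already controls.
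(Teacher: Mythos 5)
Your proposal is correct and follows essentially the same route as the paper's proof: use the degree bound $< p^r$ to conclude that $u_s$ acts trivially on $M$ for $s \geq r$ (truncating the sum $\sum_{s\geq 0} a_s^{p^s}u_s$ to $s < r$), and then observe that the substitution $b_{r-1-i} = a_i$ turns this truncated sum into the linear part $\sum_{i=0}^{r-1} b_{r-1-i}^{p^i}u_i$ of $\sigma_{\ul b\,*}(u_{r-1})$ from Proposition \ref{linapprox}. One small clarification: the degree hypothesis is not needed to ``ignore'' the nonlinear terms of (\ref{mu}) --- those are excluded by the very definition of linearization --- its only role is the truncation of the left-hand infinite sum, exactly as in your main computation.
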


\begin{proof}
Recall that $u_r \in k[T]^*$ sends a polynomial $p(T)$ to the coefficient of $T^{p^r}$ of $p(T)$.
By (\ref{pairing}),  the action of 
$u_s$ on $M$ is given by sending $m\in M$ to $(id_M\otimes u_s)(\Delta_M(m))$.
Thus, for $r$ chosen sufficiently large as in the statement
of the proposition, the action of $u_r$ on $M$
is trivial.  

Consequently, the action of  $\sum_{s \geq 0} a_s^{p^s} u_s$ on $M$ equals the action
of $\sum_{s=0}^{r-1} a_s^{p^s} u_s$ on $M$.   Unravelling the definition of $\lambda_r(-)$, we easily see
that $\sum_{s=0}^{r-1} a_s^{p^s} u_s \ = \ \sum_{i = 0}^{r-1} b_{r-1-i}^{p^i}u_i$, thereby proving the 
proposition.
\end{proof}

The following definition of support varieties for finite dimensional rational $\bG_a$-modules will serve
as our model in the next section for the definition of support varieties for more general linear algebraic
groups.  
 
\begin{defn}
\label{supportGa}
Let $M$ be a  rational $\bG_a$-module.  We define the {\bf support variety} of $M$ as the 
subset \ $V(\bG_a)_M \ \subset \ V(\bG_a)$ \ with the subspace topology
consisting of those 1-parameter subgroups $\sigma_{\ul a}: \bG_a \to \bG_a$ at which
the action (in the sense of Definition \ref{Gaction}) is not free; in other words, the action of $M$
of $\sum_{s \geq 0} (\sigma_{a_s})_*(u_s) \in k\bG_a$ is not free.

 If $M$ is finite dimensional, then $V(\bG_a)_M \ \subset \ V(\bG_a)$ consists of those
1-parameter subgroups $\sigma_{\ul a}$ at which the Jordan type of $M$ at $\sigma_{\ul a}$
(in the sense of Definition \ref{GaJT} below) has some block of size less than $p$.
\end{defn}

We provide a list of good properties for $M \  \mapsto \   V(\bG_a)_M$, using the analogues of
these properties established for infinitesimal group schemes (see \cite{SFB2}).   For the first 
property, we require that $M$ has degree $< p^r$ for some $r > 0$, for the second we require
the stronger condition that $M$ be finite dimensional (also required in the case of rational
modules for infinitesimal group schemes), and for the remaining properties we place no 
condition on $M$.

\begin{thm}
\label{Ga-items}
Let $M$ be a rational $\bG_a$-module.
\begin{enumerate}
\item  If $M$ has degree $< p^r$, then $V(\bG_a)_M \ = \ \lambda_r^{-1}(V_r(\bG_a)_M(k))$ 
(which equals $pr_r^{-1}(\lambda_{r,r}(V_r(\bG_a)_M(k)))$).
\item
If $M$ is finite dimensional, then $V(\bG_a)_M \subset V(\bG_a)$ is closed.
\item
$V(\bG_a)_{M\oplus N} = V(\bG_a)_M \cup V(\bG_a)_N.$
\item
$V(\bG_a)_{M\otimes N} = V(\bG_a)_M \cap V(\bG_a)_N.$
\item
 If $0 \to M_1 \to M_2 \to M_3 \to 0$ is a short exact sequence
of rational $\bG_a$-modules, then the support variety $V(\bG_a)_{M_i}$ of one of the
$M_i$'s is contained in the union of the support varieties of the other two.
\item 
$V(\bG_a)_{M^{(1)}} \ = \ \{ \sigma_{(a_0,a_1,\ldots)} \in V(\bG_a) :
 \sigma_{(a_1^p, a_2^p,\ldots)} \in V(\bG_a)_M\} .$
\item  For any $r >0$, the restriction of $M$ to $k\bG_{a(r)}$ is injective (equivalently, projective) 
if and only if the intersection of $V(\bG_a)_M$ with the subset 
$\{ \sigma_{\ul a}: a_s = 0, s \geq  r \} \ \subset \ V(\bG_a)$ equals $\{ \sigma_{\ul 0} \}$.
\end{enumerate}
\end{thm}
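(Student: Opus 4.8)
The plan is to prove each of the seven items by reducing, via Proposition \ref{com-Ga} and Theorem \ref{Ga-items}(1), to the corresponding known statement for the Frobenius kernels $\bG_{a(r)}$ established in \cite{SFB2}. The organizing principle is that for a module of degree $< p^r$, the action at $\sigma_{\ul a}$ (Definition \ref{Gaaction}) agrees with the linearization of the $\bG_{a(r)}$-action at $\sigma_{\lambda_r(\ul a)}$, and freeness is detected by this linearization. So the first step is to prove item (1), since items (2)--(5) will follow from it together with the analogous properties for $V_r(\bG_a)_M$.

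For item (1), I would argue directly from Definition \ref{supportGa}: a 1-parameter subgroup $\sigma_{\ul a}$ lies in $V(\bG_a)_M$ iff the action of $\sum_{s\ge 0} a_s^{p^s} u_s$ is not free on $M$. By Proposition \ref{com-Ga}, when $M$ has degree $< p^r$ this operator equals the linearization of $\sigma_{\lambda_r(\ul a)*}(u_{r-1})$, and since support varieties are detected by linearizations (in the sense of \cite[1.13]{FPS}), non-freeness of the action at $\sigma_{\ul a}$ is equivalent to $\sigma_{\lambda_r(\ul a)} \in V_r(\bG_a)_M(k)$. This is precisely the statement $\sigma_{\ul a}\in\lambda_r^{-1}(V_r(\bG_a)_M(k))$. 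The parenthetical reformulation then follows from the factorization $pr_r = \lambda_{r,r}\circ\lambda_r$ of Remark \ref{invlim}, equation (\ref{pr=}), together with the fact that $\lambda_{r,r}$ is an involution.

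Items (2)--(5) I would then deduce formally. For (2), $V_r(\bG_a)_M(k)$ is closed in $V_r(\bG_a)(k)$ by \cite[6.1]{SFB2}, and $\lambda_r$ is continuous, so its preimage is closed; this uses (1) and finite-dimensionality to guarantee a valid choice of $r$. For (3), (4), (5) the same preimage description reduces the claim to the direct-sum, tensor-product, and short-exact-sequence properties of $V_r(\bG_a)_M$ for a common $r$ large enough to bound all the modules involved (for tensor products one also invokes that $u_{r-1}$ acts as a sum of commuting operators via the coproduct, matching the finite-group-scheme argument). For (6), the Frobenius-twist formula, I would combine Proposition \ref{func-frob}(2)---which identifies the action on $M^{(1)}$ at $exp_{\ul B}$ with the shift $\sum_{s\ge 1}(exp_{B_s^{(1)}})_*(u_{s-1})$---with the explicit reindexing $(a_0,a_1,\ldots)\mapsto(a_1^p,a_2^p,\ldots)$ that this shift induces on the defining sequences, reading off exactly the stated set.

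Item (7) is the step I expect to be the main obstacle, since it connects the infinite object $V(\bG_a)_M$ to genuine $\bG_{a(r)}$-projectivity rather than to a single rank condition. Here the subset $\{\sigma_{\ul a}: a_s=0,\, s\ge r\}$ is exactly the image of $V_r(\bG_a)$ sitting inside $V(\bG_a)$ via the splitting of Proposition \ref{prop:constr}(3), and restriction of $M$ to $k\bG_{a(r)}$ is injective iff $M$ is $kE$-projective for the corresponding elementary abelian group. The plan is to use the standard fact (from \cite{SFB2}) that a finite-dimensional $\bG_{a(r)}$-module is free iff its support variety $V_r(\bG_a)_M$ is trivial, and then to show via item (1) and the reversal map $\lambda_r$ that the intersection of $V(\bG_a)_M$ with this finite-height subset corresponds precisely to $V_r(\bG_a)_M(k)$. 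The delicate point will be matching the indexing conventions: I must check that the condition $a_s=0$ for $s\ge r$ on the 1-parameter subgroup translates, under $\lambda_r$, to the full rank variety $V_r(\bG_a)_M(k)$ rather than some shifted or truncated piece, and confirm that the triviality of this intersection (a point $\{\sigma_{\ul 0}\}$) is equivalent to triviality of $V_r(\bG_a)_M$. Once that correspondence is verified, the equivalence with injectivity/projectivity follows from the rank-variety freeness criterion.
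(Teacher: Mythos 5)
There is a genuine gap, and it concerns the scope of items (3)--(5) and (7). The theorem places \emph{no} hypothesis on $M$ for these items: they are asserted for arbitrary rational $\bG_a$-modules, including infinite dimensional ones of unbounded degree (e.g.\ $k[\bG_a]$ itself). Your plan to deduce (3), (4), (5) from item (1) by choosing ``a common $r$ large enough to bound all the modules involved'' therefore fails at the first step: for a rational module of unbounded degree no such $r$ exists, so item (1) simply does not apply. The paper's proof avoids any degree bound by arguing pointwise: for each fixed $\sigma_{\ul a} \in V(\bG_a)$ one restricts the $k\bG_a$-action along the map $k[u]/u^p \to k\bG_a$, $u \mapsto \sum_{s \geq 0} a_s^{p^s}u_s$, and then (3), (4), (5) become elementary statements about $k[u]/u^p$-modules, checked one 1-parameter subgroup at a time. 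This pointwise reduction is the key idea your proposal is missing; with it, no bounded-degree or finiteness hypothesis is ever needed for these items.

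A second, related gap is your treatment of freeness via linearization. You invoke \cite[1.13]{FPS} to assert that ``support varieties are detected by linearizations,'' but that result concerns maximal Jordan types of \emph{finite dimensional} modules. Item (1) is stated for modules of degree $< p^r$, which may be infinite dimensional, and item (7) is stated for arbitrary $M$; in both cases the equivalence between freeness of the action at $\sigma_{\ul a}$ and freeness of the linearized infinitesimal action $(\sigma_{\lambda_r(\ul a)})_*(u_{r-1})$ requires the infinite dimensional detection result \cite[4.6]{FP2} (projectivity of pull-backs along equivalent $\pi$-points can be tested on all finite dimensional modules), which is exactly how the paper splits its proof of (1) into the finite dimensional case (via \cite[2.7]{FPS}) and the general case (via \cite[4.6]{FP2}). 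Similarly, in item (7) your appeal to the \cite{SFB2} criterion ``free iff trivial support variety'' for \emph{finite dimensional} $\bG_{a(r)}$-modules does not cover the infinite dimensional $M$ allowed by the statement; one needs the corresponding criterion for arbitrary $G_{(r)}$-modules (as in \cite{P}, \cite[5.1]{FP2}). Your outline of (1) in the finite dimensional case, of (2), and of (6) via Proposition \ref{func-frob} does match the paper's argument, and your identification in (7) of the finite-height subset with $V_r(\bG_a)(k)$ via $\lambda_r$ is the right picture; the missing ingredients are precisely the pointwise $k[u]/u^p$ argument and the infinite dimensional inputs.
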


\begin{proof} Comparing Definitions \ref{infact} and \ref{supportGa}, we see that to prove Property (1) it suffices to 
compare the actions of $\sigma_{\lambda_r(\ul a),*}(u_{r-1})$ and of $\sum_{s \geq 0} (\sigma_{a_s})_*(u_s)$.
Thus, Property (1) for $M$ finite dimensional is a consequence of Proposition \ref{com-Ga}
and \cite[2.7]{FPS}, which compares maximal Jordan types for finite dimensional modules pulled-back 
via a flat map and its linearization.  For $M$ infinite dimensional, we use the finite dimensional case just proved
together with  \cite[4.6]{FP2} which asserts that comparing projectivity pulled back along two
$\pi$-points for all finite dimensional representations suffices to compare projectivity of pull-backs for 
arbitrary representations.   Property (2) follows from 
property (1), the fact that $V_r(\bG_a)_M$ is closed in $V_r(G)$ whenever $M$ is finite
dimensional,  and the defining property of the topology on
$V(\bG_a)$.

Properties (3), (4), and (5) are readily checked by checking at one $\sigma_{\ul a} \in V(\bG_a)$ at a time.
Namely, for any $\sigma_{\ul a} \in V(\bG_a)$, we restrict the action of $k\bG_a$ on $M$ to $k[u]/u^p$, where
$k[u]/u^p \to k\bG_a$ is given by sending $u$ to $\sum_{s \geq 0} a^{p^s}u_s$.  Thus, the verification
at some $\sigma_{\ul a} \in V(\bG_a)$ reduces to the verification of these properties for $k[u]/u^p$-modules,
which is essentially trivial.

Property (6) follows from Proposition \ref{func-frob}
which tells us that the action of $\sigma_{(a_0,a_1,\ldots)}$ on $M^{(1)}$ equals the action of
$ \sigma_{(a_1^p,a_2^p,\ldots)} $ on $M$.  

For any $\sigma_{\ul a}$ with $a_s = 0$ for $s \geq r $, Proposition \ref{com-Ga} tells us that the 
action of $\bG_a$ on $M$  at $\sigma_{\ul a}$ equals the linearization of the action
of $(\sigma_{\lambda_r(\ul a)})_*(u_{r-1})$;   
as shown in \cite[4.6]{FP2} (for possibly infinite dimensional modules), one of these actions is free if and only if the other is.
Since $\lambda_r$ is an involution on the set of involutions $\ul a$ with  $a_s = 0$ for $s \geq r $, the condition that the action of $(\sigma_{\lambda_r(\ul a)})_*(u_{r-1})$
on $M$ is free for all $\ul a \not= 0$ with $a_s = 0$ for $s \geq r$ is equivalent to the condition that $M$ is injective
as a $\bG_{a(r)}$-module.  As defined, for example in \cite[5.1]{FP2}, the 
 subset  of those $ \sigma_{\ul a} \in  \ V(\bG_a)$ at which $M$ is not free is
by definition $V(\bG_a)_M$.    Property (7) now follows.
\end{proof}

As recalled in the proof of Corollary \ref{wild}, 
the category of $\bG_{(a(r)}$-modules is equivalent to the category of $kE$-modules where
$E$ is the elementary abelian $p$-group $\Z/p^{\times r}$.
Since many examples of support varieties (equivalently, of rank varieties) for elementary
abelian $p$-groups have been computed, Proposition \ref{prop:constr}(3) together with Theorem 
\ref{Ga-items}(1) provides many
explicit examples.  The following corollary is a simple consequence of one aspect of our
knowledge of support varieties for elementary abelian $p$-groups.

\begin{cor}
For any  closed and conical subset  $X \subset V_r(\bG_{a(r)}) $
(i.e., zero locus of homogeneous polynomial equations), there exists some finite dimensional
rational $\bG_a$-module $M_X$ such that 
$$V(\bG_a)_{M_X} \ = \ pr_r^{-1}(X) \  \subset \ V(\bG_a).$$
\end{cor}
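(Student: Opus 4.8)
The plan is to realize the desired module $M_X$ by transporting a known realization result from the theory of elementary abelian $p$-groups across the equivalence recalled in the proof of Corollary \ref{wild}. The key input is the classical fact (due to Avrunin--Scott / Carlson) that for the group algebra $kE$ with $E = (\bZ/p)^{\times r}$, every closed conical subvariety of the rank variety $V_r(\bG_{a(r)}) \simeq \bA^r$ is realized as the rank variety (equivalently, support variety) of some finite dimensional $kE$-module. Since the algebra $k\bG_{a(r)} = k[u_0,\ldots,u_{r-1}]/(u_i^p)$ is isomorphic to $kE$, given our closed conical $X \subset V_r(\bG_{a(r)})$ there is a finite dimensional $\bG_{a(r)}$-module, call it $N_X$, whose support variety $V_r(\bG_{a(r)})_{N_X}$ equals $X$.

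First I would invoke Proposition \ref{prop:constr}(3) to extend the $\bG_{a(r)}$-module structure on $N_X$ to a rational $\bG_a$-module structure; set $M_X$ to be the resulting rational $\bG_a$-module, which is still finite dimensional. By construction $M_X$ has degree $< p^r$, so Theorem \ref{Ga-items}(1) applies and gives
\begin{equation*}
V(\bG_a)_{M_X} \ = \ \lambda_r^{-1}(V_r(\bG_a)_{M_X}(k)) \ = \ pr_r^{-1}(\lambda_{r,r}(V_r(\bG_a)_{M_X}(k))).
\end{equation*}
The remaining point is to identify $V_r(\bG_a)_{M_X}$ with $X$ and reconcile the appearance of the involution $\lambda_{r,r}$ with the statement, which asserts the clean equality $V(\bG_a)_{M_X} = pr_r^{-1}(X)$.

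The main obstacle I anticipate is precisely this index-reversal bookkeeping. The realization theorem for $kE$ produces a module with a prescribed \emph{rank variety}, but Definition \ref{infact} and Proposition \ref{linapprox} show that the reversal map $\lambda_r$ intervenes between the action at $\sigma_{\ul a}$ used to define $V(\bG_a)_M$ and the linearized action at $\sigma_{\lambda_r(\ul a)}$ used to define $V_r(\bG_a)_M$. Since $\lambda_{r,r}$ is an involution (Remark \ref{invlim}), the way to handle this is to apply the realization theorem not to $X$ itself but to its image $\lambda_{r,r}(X)$, which is again closed and conical because $\lambda_{r,r}$ is a linear involution. Choosing $N_X$ so that $V_r(\bG_a)_{N_X} = \lambda_{r,r}(X)$, the identity $\lambda_{r,r}\circ\lambda_{r,r} = \id$ then yields $\lambda_{r,r}(V_r(\bG_a)_{M_X}(k)) = X$, so that Theorem \ref{Ga-items}(1) delivers $V(\bG_a)_{M_X} = pr_r^{-1}(X)$ as required. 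I would close by noting that one must verify the Frobenius-twist/linearity conventions of $\lambda_{r,r}$ carefully (this is the one genuinely error-prone step), but that no further technical difficulty arises since all the homological content is already packaged into Theorem \ref{Ga-items} and the cited realization result for elementary abelian $p$-groups.
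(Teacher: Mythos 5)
Your proposal is correct and follows essentially the same route as the paper's own proof: the paper likewise invokes Carlson's $L_\zeta$-module realization theorem (via \cite[7.5]{SFB2}) applied to $\lambda_{r,r}(X)$ rather than to $X$, extends the resulting $\bG_{a(r)}$-module to a rational $\bG_a$-module by Proposition \ref{prop:constr}(3), and concludes via Theorem \ref{Ga-items}(1) together with the identity $pr_r = \lambda_{r,r}\circ\lambda_r$ and the fact that $\lambda_{r,r}$ is an involution. Your extra care about the index-reversal bookkeeping is exactly the right (and only) delicate point, and you resolve it as the paper does.
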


\begin{proof}
As in Remark \ref{confuse},  $pr = \lambda_{r,r} \circ \lambda_r:  V(\bG_a) \to V_r(\bG_a)(k)$.
The construction of Carlson's $L_\zeta$-modules
determines a finite dimensional  $\bG_{a(r)}$-module $M$ such that 
$V_r(\bG_{a(r)})_M) \ = \  \lambda_{r,r}(X)$ (see \cite[7.5]{SFB2}).  Thus, the corollary follows from 
Proposition \ref{prop:constr}(3) and Theorem \ref{Ga-items}(1).
\end{proof}

In the definition below, we define the ``Jordan type" of a finite dimensional, rational 
$\bG_a$-module $M$ at a
1-parameter subgroup $\sigma_{\ul a}: \bG_a \to \bG_a$ as the Jordan type of the 
$p$-nilpotent operator associated to the action of $\bG_a$ on $M$ at $\sigma_{\ul a}$.  

\begin{defn}
\label{GaJT}
Let $M$ be a finite dimensional rational $\bG_a$-module, $\ul a \in \bA^\infty$, 
and $\sigma_{\ul a}: \bG_a \to \bG_a$.   We define the {\bf Jordan type} of $M$ at $\sigma_{\ul a}$
by setting
$$JT_{\bG_a,M}(\sigma_{\ul a}) \quad \equiv \quad JT(\sum_{s \geq 0} a_s^{p^s}u_s,M),$$
 the Jordan type of the action of $\bG_a$ on $M$ at $\sigma_{\ul a}$ (see Definition \ref{Gaaction}).
\end{defn}

\vskip .1in

We remind the reader of the partial ordering on Jordan types of an endomorphism of an
$m$-dimensional vector space: 
\begin{equation}
\label{eq:partial}
 c_1[1] + \cdots +c_p[p] \ \leq \  b_1[1]+ \cdots + b_p[p]  \ \Leftrightarrow \  \sum_{i=j}^p c_i\cdot i \leq
 \sum_{i=j}^p i \cdot b_i, \ \forall j
 \end{equation}
for $m = \sum c_i \cdot i = \sum b_i \cdot i.$
We also remind the reader that for any finite dimensional $\bG_{a(r)}$-module $M$
there is an infinitesimal 1-parameter subgroup $\mu: \bG_{a(r)} \to \bG_{a(r)}$
at which the Jordan type of $M$ is ``strictly maximal", greater or equal to the 
Jordan type of $M$ at any infinitesimal 1-parameter subgroup of $\bG_{a(r)}$.  This is 
 a reflection of the fact that $V_r(\bG_a)$ is irreducible.  

In the following theorem, we verify for $r$ 
sufficiently large for a given finite dimensional rational $\bG_a$-module $M$ that 
the maximum Jordan type of $M$ as a rational $\bG_a$-module equals the  
Jordan type of $M$ as a $\bG_{a(r)}$-module.
The reader will observe an unavoidable confusion of notation:
if $\ul a =  (a_0,\ldots,a_{r-1},0,0,\ldots)$, then $JT_{\bG_a,\sigma_{\ul a}}(M)$ equals
$JT_{\bG_{a(r)},\sigma_{\lambda_r(\ul a)}}(M)$ and not $JT_{\bG_{a(r)},\sigma_{pr_r(\ul a)}}(M)$.
The  compatibility of Jordan types of $M$ restricted to $\bG_{a(r)}$ for $r >> 0$ is achieved 
thanks to our twisting functions $\lambda_r$.

\begin{thm}
\label{maximal}
Let $M$ be a  finite dimensional rational $\bG_a$-module of degree $< p^r$.   
Let $\ul b = (b_0,\ldots,b_{r-1})$ be chosen so that
$JT_{\bG_{a(r)},M}(\sigma_{\ul b})$ is (strictly) maximal
among partitions of $dim(M)$  occurring as Jordan types of $M$ at (infinitesimal)
1-parameter subgroups of $\bG_{a(r)}$.
\begin{enumerate}
\item 
For any 1-parameter subgroup $\sigma_{\ul a}: \bG_a \to \bG_a$
with $\lambda_r(\ul a) = \ul b$, 
\begin{equation}
\label{confuse}
JT_{\bG_a,M}(\sigma_{\ul a}) \ = \  JT_{\bG_{a(r)},M}(\sigma_{\ul b}).
\end{equation}
In other words, the Jordan type of $M$  at $\sigma_{\ul a}$ as a $\bG_a$-module 
equals the Jordan type of $M$ at $\sigma(\ul b)$ as a $\bG_{a(r)}$-module.
\item
$JT_{\bG_{(a(r)},M}(\sigma_{\ul b})$  equals the maximum among the
Jordan types $JT_{\bG_a,M}(\sigma_{\ul a}),\ \ul a \in\bA^\infty$.
\item
For any $c \geq 0$, 
$V_{r+c}(\bG_a)_M  \ = \  q_{r+c,r}^{-1}(V_r(\bG_a)_M).$
\end{enumerate}
\end{thm}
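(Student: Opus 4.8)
The goal is to establish the three assertions of Theorem \ref{maximal} concerning the compatibility of Jordan types and support varieties for a finite-dimensional rational $\bG_a$-module $M$ of degree $< p^r$. The unifying principle I would exploit is Proposition \ref{com-Ga}, which identifies the action of $\bG_a$ on $M$ at $\sigma_{\ul a}$ with the \emph{linearization} of the action of $\bG_{a(r)}$ on $M$ at $\sigma_{\lambda_r(\ul a)}$. First I would prove (1). Fix $\ul a$ with $\lambda_r(\ul a) = \ul b$. By Definition \ref{GaJT} the Jordan type $JT_{\bG_a,M}(\sigma_{\ul a})$ is the Jordan type of the operator $\sum_{s \geq 0} a_s^{p^s} u_s$, which by Proposition \ref{com-Ga} equals $\sum_{i=0}^{r-1} b_{r-1-i}^{p^i} u_i$, the linear part of $\sigma_{\ul b *}(u_{r-1})$. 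The crux is therefore comparing the Jordan type of a $p$-nilpotent operator with that of its linearization. The key external input is \cite[2.7]{FPS}: since $\ul b$ was chosen so that $JT_{\bG_{a(r)},M}(\sigma_{\ul b})$ is \emph{strictly maximal}, the linearization does not lower the Jordan type at the maximal point, so the two Jordan types coincide, giving (\ref{confuse}).

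\textbf{Deriving (2) from (1).} Assertion (2) asserts that $JT_{\bG_{a(r)},M}(\sigma_{\ul b})$ is the maximum among all $JT_{\bG_a,M}(\sigma_{\ul a})$, $\ul a \in \bA^\infty$. I would argue as follows. For an arbitrary $\ul a \in \bA^\infty$, since $M$ has degree $< p^r$ the action $\sum_{s\geq 0} a_s^{p^s} u_s$ depends only on the first $r$ coordinates (the operators $u_s$ act trivially for $s \geq r$ on $M$, as in the proof of Proposition \ref{com-Ga}). Hence $JT_{\bG_a,M}(\sigma_{\ul a})$ equals $JT_{\bG_a,M}(\sigma_{\ul a'})$ where $\ul a'$ truncates $\ul a$ after $r$ terms, and by Proposition \ref{com-Ga} this is the linearized Jordan type of $M$ at $\sigma_{\lambda_r(\ul a')} \in V_r(\bG_a)$. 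By the maximality of $\ul b$ among infinitesimal $1$-parameter subgroups of $\bG_{a(r)}$, together with the fact (again from \cite[2.7]{FPS}) that the linearized Jordan type is bounded above by the genuine Jordan type at that point, every such $JT_{\bG_a,M}(\sigma_{\ul a})$ is $\leq JT_{\bG_{a(r)},M}(\sigma_{\ul b})$ in the partial ordering (\ref{eq:partial}); and part (1) shows this bound is achieved. This yields (2).

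\textbf{Assertion (3) on support varieties.} For (3) I would compare $V_{r+c}(\bG_a)_M$ and $q_{r+c,r}^{-1}(V_r(\bG_a)_M)$ pointwise over $V_{r+c}(\bG_a)$. Recall from Definition \ref{infact} that $\sigma_{\ul d} \in V_{r+c}(\bG_a)_M$ precisely when $\sigma_{\ul d *}(u_{r+c-1})$ acts non-freely on $M$. The plan is to use the degree bound: because $M$ has degree $< p^r$, raising the height from $r$ to $r+c$ only reindexes the relevant operators, and the non-freeness of the action is governed by the same truncated data. Concretely, I would show that for $\ul d \in \bA^{r+c}$ the operator $\sigma_{\ul d *}(u_{r+c-1})$, after applying Proposition \ref{com-Ga}, has linearization determined exactly by $q_{r+c,r}(\ul d) = (d_c,\ldots,d_{r+c-1})$; thus freeness at height $r+c$ is equivalent to freeness of the corresponding rank-variety point at height $r$, namely $\sigma_{q_{r+c,r}(\ul d)}$. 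Matching this equivalence against the definitions of the two rank varieties establishes the set equality $V_{r+c}(\bG_a)_M = q_{r+c,r}^{-1}(V_r(\bG_a)_M)$.

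\textbf{Main obstacle.} The principal difficulty is entirely contained in part (1): controlling how the linearization of the $p$-nilpotent operator $\sigma_{\ul b *}(u_{r-1})$ affects Jordan type. The higher-order terms in (\ref{mu}) genuinely occur (as the remark following Proposition \ref{linapprox} emphasizes for $r \geq 2$), so it is \emph{not} true that the Jordan type of an operator always equals that of its linearization at an arbitrary point. The reason the argument succeeds is the \emph{strict maximality} of $\ul b$, which forces the two to agree there; the delicate point is invoking \cite[2.7]{FPS} correctly, and verifying that the flat-map-versus-linearization comparison applies in this setting. Once part (1) is secured, parts (2) and (3) follow by the relatively routine bookkeeping of the index-reversing maps $\lambda_r$ and the truncation maps $q_{r+c,r}$, together with the degree-$< p^r$ hypothesis that renders all higher operators $u_s$, $s \geq r$, irrelevant.
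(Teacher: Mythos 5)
Your parts (1) and (3) are essentially sound. Part (1) is exactly the paper's argument: Proposition \ref{com-Ga} identifies $JT_{\bG_a,M}(\sigma_{\ul a})$ with the Jordan type of the linear part of $\sigma_{\ul b *}(u_{r-1})$, and the result of \cite{FPS} on maximal Jordan types gives equality at the strictly maximal point $\ul b$. Your part (3) is a valid alternative to the paper's route (the paper argues that non-membership in $V_{r+c}(\bG_a)_M$ means the free, hence maximal, Jordan type and then uses $q_{r+c,r}\circ\lambda_{r+c}=\lambda_r$): your reduction of both sides to the identical truncated linear operator $\sum_{i=0}^{r-1} d_{r+c-1-i}^{p^i}u_i$ works, but be aware that the step ``freeness of $\sigma_{\ul d *}(u_{r+c-1})$ on $M$ is equivalent to freeness of its linearization'' is not Proposition \ref{com-Ga} (which is only an identity of operators); it is the equivalence of a flat map with its linear part as $\pi$-points, as in \cite[6.4]{SFB2} or \cite[4.6]{FP2}.

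Part (2) contains a genuine gap. You invoke ``the fact (from \cite[2.7]{FPS}) that the linearized Jordan type is bounded above by the genuine Jordan type at that point.'' No such pointwise inequality is proved in \cite{FPS} --- the results there compare a flat map with its linearization only at points realizing the \emph{maximal} Jordan type --- and the pointwise claim is false. Explicit counterexample with $p=3$, $r=3$: let $N=k[x,y]/(x^3,y^3)$, with $u_0,u_1$ acting by multiplication by $x,y$ and $u_2$ acting by multiplication by $-y-(x^2y+xy^2)/2$; extend $N$ to a rational $\bG_a$-module of degree $<p^3$ by Proposition \ref{prop:constr}(3). A direct computation (as in the remark following Proposition \ref{linapprox}) gives $\sigma_{(1,1,0)*}(u_2)=u_1+u_2+\frac{1}{2}u_0^2u_1+\frac{1}{2}u_0u_1^2$ in $k\bG_{a(3)}$, so this element acts on $N$ as multiplication by $y+\bigl(-y-(x^2y+xy^2)/2\bigr)+(x^2y+xy^2)/2=0$, with Jordan type $9[1]$; whereas its linearization $u_1+u_2$ acts as multiplication by $-(x^2y+xy^2)/2$, which has rank $2$ and square zero, hence Jordan type $2[2]+5[1]>9[1]$. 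Thus at this (non-maximal) point the linearized Jordan type strictly exceeds the genuine one, the opposite of what you assert; the inequality you rely on simply does not hold away from maximal points.

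The conclusion of (2) is nevertheless true, and the repair is the paper's argument, which avoids any pointwise comparison: each $JT_{\bG_a,M}(\sigma_{\ul a})$ is the Jordan type of a \emph{linear} $\pi$-point of $\bG_{a(r)}$; by \cite[1.13]{FPS} the maximal Jordan type of $M$ over all $\pi$-points is attained at a linear one, and by the well-definedness of Jordan types at equivalent $\pi$-points realizing the maximal type \cite[3.5]{FPS} it is also attained at an infinitesimal one-parameter subgroup. Hence the maximum of the linearized Jordan types equals the maximum over infinitesimal one-parameter subgroups, namely $JT_{\bG_{a(r)},M}(\sigma_{\ul b})$; every $JT_{\bG_a,M}(\sigma_{\ul a})$ is bounded above by this common maximum, and your part (1) shows the bound is attained.
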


\begin{proof}
The first statement follows from Proposition \ref{com-Ga} and  \cite[1.13]{FPS} (see also \cite[2.7]{FPS}).
Namely, the  fundamental result concerning  the maximal Jordan type of a $k[u_1,\ldots,u_{r-1}]/(u_i^p)$-module
$M$ asserts that if this  Jordan type is achieved as the pull-back via some flat map
$\alpha: k[t]/t^p \to k[u_1,\ldots,u_{r-1}]/(u_i^p)$, then it is achieved by the (necessarily, flat) map obtained by 
sending $t$ to the linear part of  $\alpha(t)$ (i.e., linear in the $u_i$'s).

The second statement follows from the observation that the choice of $r$ guarantees that the action of $\bG_a$
on $M$ at $\sigma_{\ul a}$ equals the action of $\bG_a$ on $M$ at $\sigma_{pr_r(\ul a)}$ which in turn
equals the linearization of the action of $\bG_{a(r)}$ on $M$ at $\sigma_{\lambda_r(\ul a)}$
as seen in Proposition \ref{com-Ga}, together with the fact that 
 the maximum Jordan type of these linearizations is the maximal
Jordan type of $M$ as a $\bG_{a(r)}$-module.

To prove the last assertion, first observe that $\sigma_{\ul b} \notin V_{r+c}(\bG_a)_M$ if and only if the Jordan
type has all blocks of size $p$.   Of course, if the Jordan type has all blocks of size $p$, then
it is necessarily maximal.
For $r$ as in the statement of the proposition (and $M$ of degree $< p^r$)
and any $\sigma_{\ul a} \in V(\bG_a)$, we have
$$JT_{\bG_a,M}(\sigma_{\ul a}) \ = \ JT_{\bG_{a,(r+c)},M}(\lambda_{r+c}(\sigma_{\ul a})) \ = 
\ JT_{\bG_{a(r)},M}(\lambda_r(\sigma_{\ul a}))$$
if any one of those three Jordan types is maximal among the Jordan types 
$JT_{\bG_a,M}(\sigma_{\ul a})$, $\ul a \in\bA^\infty$.
Thus, the last assertion follows from the observation that $q_{r+i,r} \circ \lambda_{r+c} = \lambda_r$.
\end{proof}

In \cite{FP2}, the author and J. Pevtsova consider invariants for rational modules for finite
group schemes which are finer than support varieties.  We develop the extension of these
``generalized support varieties" for rational $\bG_a$-modules.

The following proposition, essentially found in \cite[2.8]{FP2},
is a generalization of the topological property of Theorem \ref{Ga-items}(2).

\begin{prop}
\label{pord}
Let $M$ be a finite dimensional rational $\bG_a$-module of dimension $n$
and let $[c] = \sum_{i=0}^p c_i [i]$ be a partition of $n$.  Then
$$V^{\leq [\ul c]}(\bG_a)_M \ \equiv \ \{ \sigma_{\ul a}: JT_M(\sigma_{\ul a}) \leq [\ul c] \}$$
is a closed subspace of $V(\bG_a)$.

Equivalently, for each $j, 0 < j < p$, the subset of  those $\sigma_{\ul a} \in V(\bG_a)$ 
such that the rank of 
$(\sum_{i=0}^\infty a_i^{p^i}u_i)^j: M \to M$ is less than some fixed integer $c$
(which is equivalent to the condition that the rank of $(\sum_{i=0}^\infty a_i^{p^i}u_i)^j: M \to M$
is $\leq c-1$)  is closed. 
\end{prop}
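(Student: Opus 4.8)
The plan is to reduce the first (Jordan type) formulation to the second (rank) formulation, then prove the rank statement by exhibiting the relevant set as the preimage under a single projection $pr_r$ of a Zariski-closed subset of $V_r(\bG_a)(k)$, exactly in the spirit of the proof of Theorem \ref{Ga-items}(2). First I would fix a basis of $M$ and write $\Theta_{\ul a} = \sum_{s \geq 0} a_s^{p^s} u_s \in \End_k(M)$ for the $p$-nilpotent operator giving the action of $\bG_a$ on $M$ at $\sigma_{\ul a}$ (Definition \ref{Gaaction}), so that $JT_M(\sigma_{\ul a}) = JT(\Theta_{\ul a},M)$. The ranks $\rk(\Theta_{\ul a}^j)$, $0 \leq j \leq p$, determine and are determined by the Jordan type, and by the standard relationship between the dominance order (\ref{eq:partial}) and ranks of powers one has $JT_M(\sigma_{\ul a}) \leq [\ul c]$ if and only if $\rk(\Theta_{\ul a}^j) \leq \rho_j$ for every $1 \leq j < p$, where $\rho_j := \sum_{i > j} c_i(i-j)$ is the rank of the $j$-th power of any operator of Jordan type $[\ul c]$. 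Thus $V^{\leq [\ul c]}(\bG_a)_M$ is a finite intersection of sets of the second type (taking $c = \rho_j + 1$), and it suffices to prove the rank assertion for each fixed $j$ and $c$.

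Next I would cut down to finitely many coordinates. Since $M$ is finite dimensional it has degree $< p^r$ for some $r$ (Definition \ref{finitedegree}); equivalently, by Example \ref{ex-ga}, $u_s$ acts as $0$ on $M$ for all $s \geq r$. Hence $\Theta_{\ul a} = \sum_{s=0}^{r-1} a_s^{p^s} u_s$ depends only on $(a_0,\ldots,a_{r-1})$, that is, only on $pr_r(\sigma_{\ul a}) \in V_r(\bG_a)(k) \cong \bA^r$. (By Proposition \ref{com-Ga} this operator $\Theta_{\ul a}$ is already the linearization of the action of $\bG_{a(r)}$ at $\sigma_{\lambda_r(\ul a)}$, so no additional linearization step, and no appeal to maximality as in Theorem \ref{maximal}, is required here.)

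Finally I would establish closedness by algebraicity together with continuity. With the basis fixed, the matrix entries of $(\sum_{s=0}^{r-1} b_s^{p^s} u_s)^j$ are polynomials in $b_0,\ldots,b_{r-1}$, since each $b_s^{p^s}$ is a polynomial in $b_s$ and the $u_s$ are fixed matrices. Consequently the locus $X_j := \{\, \ul b \in \bA^r : \rk((\sum_{s=0}^{r-1} b_s^{p^s} u_s)^j) \leq c-1 \,\}$ is the common zero set of the $c \times c$ minors of that power, hence Zariski-closed in $V_r(\bG_a)(k)$; this is the $\bG_a$-analog, via Proposition \ref{com-Ga}, of \cite[2.8]{FP2}. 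The subset in question is precisely $pr_r^{-1}(X_j)$, and since $pr_r : V(\bG_a) \to V_r(\bG_a)(k)$ is continuous for the defining (inverse-limit) topology on $V(\bG_a)$, its preimage is closed. Intersecting over $1 \leq j < p$ then yields that $V^{\leq [\ul c]}(\bG_a)_M$ is closed.

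The genuine content is modest once these reductions are in place, and I expect no serious obstacle; the two points demanding care are (i) the translation of the dominance order on Jordan types into the finite system of rank inequalities, which I would state as a standard fact rather than re-derive, and (ii) verifying that, after bounding the degree, $\Theta_{\ul a}$ is honestly polynomial in the finitely many $a_0,\ldots,a_{r-1}$, so that each rank condition is Zariski-closed and descends to a preimage under the \emph{single} continuous projection $pr_r$. I would be most careful here that the relevant coordinates are the direct ones ($pr_r$) rather than the reversed ones ($\lambda_r$): because $\Theta_{\ul a}$ is written directly in the $a_s$ (Definition \ref{Gaaction}), the reduction is to $pr_r$, and the coordinate reversal $\lambda_r$ entering Theorem \ref{Ga-items}(1) plays no role in this argument.
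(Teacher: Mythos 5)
Your proposal is correct and follows essentially the same route as the paper: translate the dominance condition on Jordan types into finitely many rank inequalities, reduce to the first $r$ coordinates using that $u_s$ acts trivially on $M$ for $s \geq r$ (so the operator depends only on $pr_r(\sigma_{\ul a})$, with no coordinate reversal needed), show each rank locus is Zariski-closed in $\bA^r \cong V_r(\bG_a)(k)$, and pull back along the continuous projection $pr_r$. The only immaterial difference is that you certify closedness of the rank loci by vanishing of $c \times c$ minors of a matrix with polynomial entries, whereas the paper expresses the same lower semicontinuity via a universal operator $\Theta_M$ over $k[x_0,\ldots,x_{r-1}]$ and Nakayama's lemma as in \cite[4.11]{FP3}.
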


\begin{proof}
The equivalence of the two statements follows from (\ref{eq:partial}).
 Namely, for a $p$-nilpotent operator $u$ on a vector space $M$ of dimension $m$, the Jordan type
 of $u$ equals $c_1[1] + \cdots +c_p[p]$ if and only if the rank of $u^j$ equals $ \sum_{i=j}^p c_i\cdot (i-j)$.
 
 Using Theorem \ref{Ga-items}(1), it suffices to replace $V(\bG_a)$ by $V_r(\bG_a) = V(\bG_{a(r)})$
 with $r$ chosen so that $\Delta_M(M) \subset M \otimes k[T]_{<p^r}$.  We consider the
 $k[x_0,\ldots,x_{r-1}]$-linear  operator 
 $$\Theta_M: M \otimes k[x_0,\ldots,x_{r-1}] \  \to \ M \otimes k[x_0,\ldots,x_{r-1}], \quad
 m \mapsto \sum_{s=0}^{r-1} u_s( m) \otimes x_s^{p^s}.$$
Specializing $\Theta_M$ at $(a_0,\ldots, a_{r-1})$, 
 we obtain $\Theta_M \mapsto \sum_{s=0}^{r-1} a_s^{p^s} u_s$. 
Applying Nakayama's Lemma as in \cite[4.11]{FP3} to $Ker\{ \Theta_M^j\},\ 1 \leq j < p$, 
we conclude that the rank of the $j$-th-power of the specialization of $\Theta_M$ is
lower semi-continuous on $\bA^r$; thus the subset of those $\sigma_{\ul a} \in V_r(\bG_a)$ 
such that the rank of 
$(\sum_{s=0}^{r-1} a_s u_s^{p^s})^j: M \to M$ is less than some fixed integer $c$ is closed;
of course, this is equivalent to this rank being $\leq c-1$.
\end{proof}

Using Proposition \ref{pord}, we introduce the (affine) non-maximal $j$-rank variety
$V^j(\bG_a)_M$ for a finite dimensional rational $\bG_a$-module $M$ and an integer $j, 
\ 1 \leq j < p$.   For $G$ an infinitesimal group scheme, $V^j(G)_M$ was defined in \cite[4.8]{FP2}.   

\begin{defn} 
\label{def-ga-nonmax}
For any finite dimensional rational $\bG_a$-module $M$ and any $j, \ 1 \leq j < p$, 
we define the  the (affine) non-maximal $j$-rank variety of $M$
$$V^j(\bG_a)_M \subset \ V(\bG_a)$$
to be the  subset of those $\sigma_{\ul a}: \bG_a \to \bG_a$ such that 
either $\ul a = 0$ or the rank of $(\sum_{s\geq 0} a_s^{p^s} u_s)^j: M \to M$ is not maximal. 

As in \cite[4.8]{FP2}, for any $r >0$  and any $j, \ 1 \leq j < p$, we similarly define
$$V^j(\bG_{(a(r)})_M \subset \ V(\bG_{a(r)})$$
to be the  subset of those $\mu_{\ul a}: \bG_{a(r)} \to \bG_{a(r)}$ such that 
either $\ul a = 0$ or the rank of $\mu_{\ul a*}(u_{r-1}^j): M \to M$ is not maximal.

\end{defn}

Various examples  of $V^j(\bG_a)_M$ are given in \cite{FP2}, thanks to  Proposition \ref{prop:constr}(3)
and the first statement of the following theorem.

\begin{thm}
\label{Ga-nonmax}
The non-maximal $j$-rank variety $V^j(\bG_a)_M$ of a finite dimensional rational $\bG_a$-module
satisfies the following properties:
\begin{enumerate}
\item  If $M$ has degree $< p^r$, then $V^j(\bG_a)_M \ = \ pr_r^{-1}(\lambda_{r,r}(V^j_r(\bG_a)_M(k)))$.
\item
$V^j(\bG_a)_M  \subset V(\bG_a)$ is a proper closed subspace.
\item
$V^j(\bG_a)_M$ is a subspace of $V(\bG_a)_M$, with equality if and only if the action of
$\bG_a$ at some $\sigma_{\ul a}$ has all Jordan blocks of size $p$.
\item  For any $r > 0$, the restriction of $M$ to $k\bG_{a(r)}$ is a module of constant 
$j$-rank if and only if the intersection of $V^j(\bG_a)_M$ with the subset 
$\{ \sigma_{\ul a}: \ul a \not= 0, \ a_s = 0, s \geq  r \} \ \subset \ V(\bG_a)$ is empty.
\end{enumerate}
\end{thm}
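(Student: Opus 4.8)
The plan is to establish each of the four properties of $V^j(\bG_a)_M$ by reducing to the corresponding statements for the infinitesimal group scheme $\bG_{a(r)}$, mirroring the structure of the proof of Theorem~\ref{Ga-items}. The central bridge is Proposition~\ref{com-Ga}, which identifies the action of $\bG_a$ at $\sigma_{\ul a}$ with the linearization of the action of $\bG_{a(r)}$ at $\sigma_{\lambda_r(\ul a)}$, together with the rank-semicontinuity machinery already deployed in Proposition~\ref{pord}.

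For property (1), I would first fix $r$ with $M$ of degree $< p^r$, so that by Proposition~\ref{com-Ga} the operator $\sum_{s\geq 0} a_s^{p^s}u_s$ acting on $M$ equals the \emph{linear part} of $\sigma_{\lambda_r(\ul a),*}(u_{r-1})$. The key input is \cite[1.13]{FPS}, which asserts that the $j$-ranks of a $p$-nilpotent operator and of its linearization agree at the maximal locus; hence the $j$-rank of $\sum_s a_s^{p^s}u_s$ is non-maximal exactly when the $j$-rank of $\sigma_{\lambda_r(\ul a),*}(u_{r-1})$ is non-maximal. This matches $\sigma_{\ul a}\in V^j(\bG_a)_M$ with $\lambda_r(\sigma_{\ul a})\in V^j_r(\bG_a)_M(k)$. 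Using $pr_r=\lambda_{r,r}\circ\lambda_r$ from \eqref{pr=} and the fact that $\lambda_{r,r}$ is an involution, I would rewrite the preimage to obtain $V^j(\bG_a)_M = pr_r^{-1}(\lambda_{r,r}(V^j_r(\bG_a)_M(k)))$. Property (2) then follows immediately: $V^j_r(\bG_a)_M$ is closed in $V_r(\bG_a)$ by Proposition~\ref{pord} (or \cite[4.8]{FP2}), $pr_r$ is continuous by definition of the topology on $V(\bG_a)$, and properness follows since the maximal $j$-rank is attained somewhere (at a strictly maximal Jordan type, which exists by irreducibility of $V_r(\bG_a)$), so its complement in $V(\bG_a)$ is nonempty.

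For property (3), I would compare Definitions~\ref{supportGa} and~\ref{def-ga-nonmax} pointwise: if the action of $\bG_a$ at $\sigma_{\ul a}$ is not free, then some Jordan block has size $< p$, so $\sum_s a_s^{p^s}u_s$ is not a bijection and in particular no power has full rank — giving $V^j(\bG_a)_M\subset V(\bG_a)_M$. Equality holds precisely when the maximal $j$-rank is itself non-maximal as a rank for a genuinely free action, i.e.\ exactly when no $\sigma_{\ul a}$ has all blocks of size $p$; I would phrase this using the characterization of maximal Jordan type. Property (4) is the analogue of Theorem~\ref{Ga-items}(7): the restriction to $\{\sigma_{\ul a}: a_s=0,\ s\geq r\}$ is identified via $\lambda_r$ (an involution on this locus) with all of $V_r(\bG_a)$, and constant $j$-rank for the $\bG_{a(r)}$-module $M$ means precisely that the non-maximal $j$-rank locus $V^j_r(\bG_a)_M$ reduces to $\{0\}$; hence the intersection being empty (excluding $\ul a=0$) is equivalent to constant $j$-rank.

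The main obstacle I anticipate is the careful bookkeeping in property (1): the linearization result \cite[1.13]{FPS} controls ranks only at the \emph{maximal} locus, so I must argue that non-maximality of the $j$-rank is genuinely detected by the linear part and not lost in passing between $\sum_s a_s^{p^s}u_s$ and the full operator $\sigma_{\lambda_r(\ul a),*}(u_{r-1})$ with its higher-order terms. Establishing that these two operators share the same maximal $j$-rank value (so that the non-maximal loci coincide, not merely are contained in one another) is the delicate point; once this is secured, the twisting identity $pr_r=\lambda_{r,r}\circ\lambda_r$ and the involutive nature of $\lambda_{r,r}$ handle the index juggling routinely, and properties (2)--(4) follow by the same template as Theorem~\ref{Ga-items}.
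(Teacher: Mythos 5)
Your treatment of properties (1), (2), and (4) is essentially the paper's own argument: Proposition \ref{com-Ga} combined with the $j$-rank version of the linearization results of \cite{FPS} (the paper invokes \cite[3.5]{FP2}, which records precisely the point you flag as delicate --- that maximality of $j$-rank, like maximal Jordan type, is detected by the linear part of the operator), followed by the bookkeeping identity $pr_r=\lambda_{r,r}\circ\lambda_r$, closedness via Proposition \ref{pord} (or via property (1) and closedness of $V^j_r(\bG_a)_M$), and, for (4), the characterization of constant $j$-rank as $V^j(\bG_{a(r)})_M\setminus\{0\}=\emptyset$ adapted to the proof of Theorem \ref{Ga-items}(7). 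Those parts are sound and match the paper.

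Property (3), however, contains a genuine error: your equality criterion is the negation of the one to be proved. You assert that $V^j(\bG_a)_M=V(\bG_a)_M$ holds ``exactly when no $\sigma_{\ul a}$ has all blocks of size $p$''; in fact equality holds exactly when \emph{some} $\sigma_{\ul a}$ does. If no point acts freely, then $V(\bG_a)_M=V(\bG_a)$ is everything, while $V^j(\bG_a)_M$ is always a \emph{proper} subset (the maximal attained $j$-rank is attained at some nonzero $\sigma_{\ul a}$), so the two sets differ. Conversely, if some $\sigma_{\ul a}$ acts with all blocks of size $p$, its $j$-rank is $(p-j)\dim M/p$; since a Jordan block of size $\ell$ contributes $\max\{0,\ell-j\}\le \ell(p-j)/p$ to the $j$-rank, with equality only for $\ell=p$, this value is the absolute maximum and is attained precisely at the free points, so the non-maximal-$j$-rank locus coincides with the non-free locus and equality holds. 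Relatedly, your containment argument runs in the wrong logical direction: to get $V^j(\bG_a)_M\subset V(\bG_a)_M$ you need ``free at $\sigma_{\ul a}$ $\Rightarrow$ $j$-rank maximal at $\sigma_{\ul a}$'' and then take contrapositives; and ``full rank'' is not the right notion --- a $p$-nilpotent operator is never bijective, and the maximal \emph{attained} $j$-rank can be strictly smaller than the free value $(p-j)\dim M/p$, which is exactly why the two cases above must be separated.
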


\begin{proof}
As observed in \cite[3.5]{FP2}, Theorem \ref{maximal} remains valid if  maximal Jordan
type is replaced by  maximal $j$-rank.  Modifying the proof of Theorem \ref{Ga-items}(1) 
by replacing maximal Jordan type by maximal $j$-rank gives a proof of Property (1).

The containment $V^j(\bG_a)_M \ \subset \ V(\bG_a)_M$ is immediate  from the
definition of $V^j(\bG_a)_M$, as is the assertion that $V^j(\bG_a)_M$ is necessarily
a proper  subset of $V(\bG_a)$.  The assertion that $V^j(\bG_a)_M$
is closed in $V(\bG_a)$ is a restatement of the second assertion of Proposition \ref{pord}.
(Property (2) also follows from Property (1) together with the fact that $V^j(\bG_{(a(r)})_M 
\subset V(\bG_{(a(r)})$ is closed.)
 
Equality of $V^j(\bG_a)_M $ and $V(\bG_a)_M$ occurs if and only if the  maximal $j$-rank 
occurs exactly at those $\sigma_{\ul a}$ at which the action of $\bG_a$ has all blocks of 
size $p$.   If there is some such $\sigma_{\ul a}$ at which the action of $\bG_a$ has all blocks of 
size $p$, then the $j$-rank at some $\sigma_{\ul b}$ equals that at $\sigma_{\ul a}$ if and only
if $\sigma_{\ul b}$ also has all blocks of size $p$ (the $j$-rank of a matrix with  a single Jordan
block of size $\ell \leq  p$ is $min\{ 0, \ell-j\}$).  On the other hand if there does not exist 
some $\sigma_{\ul a}$ at which the action of $\bG_a$ has all blocks of size $p$, then $V(\bG_a)_M
= V(\bG_a)$, whereas it is tautological that the $j$-rank is non-maximal on a proper subset
of $V(\bG_a)$.

Finally, to prove property (4), we recall that a $\bG_{a(r)}$-module $M$ is a module of constant $j$-rank
if and only if $V^j(\bG_{a(r)})_M - \{ 0 \}$ is empty (\cite{FP2}).  Thus, this property is 
proved by a slight adaption of the proof of Theorem \ref{Ga-items}(7).
\end{proof}

%%%%%%%%%%%%%%%%%%%%%%%%%%%%%%%%%%
%%%%%%%%Section 4 %%%%%%%%%%%%%%%%

\section{Support varieties for rational $G$-modules, $G$ a linear algebraic group of exponential type }
\label{sec:exptype}

In this section, we extend the results of \S 3 from the special case $G = \bG_a$ to linear
algebraic groups $G$ equipped with a structure of exponential type.  All simple algebraic
groups of  classical type are groups of exponential type; as remarked in Example \ref{Sobaj}, 
other examples are reductive algebraic groups, their parabolic subgroups, and
the unipotent radicals of parabolic subgroups subject to a condition on $p$
depending upon the type of $G$.

The formalism given for $G = \bG_a$ applies to this more 
general context with very little change, and we do not repeat those arguments
of \S 3 which apply essentially verbatim.  What enables this extension of Section \ref{sec:gatype} is
Proposition \ref{sobaje}, an interpretation of a  result of P. Sobaje.
In particular, Theorems \ref{G-items} and \ref{G-nonmax} extend to rational $G$-modules
the basic Theorems \ref{Ga-items} and \ref{Ga-nonmax} for rational $\bG_a$-modules.

We begin with the evident extension of Definition \ref{lambda}.  We remind the
reader that every 1-parameter subgroup $\bG_a \to G$ is of the form 
$\cE_{\ul B}$ if $G$ is provided with a structure of exponential type.

\begin{defn}
\label{Lambda}
Let $G$ be a linear algebraic group equipped with a structure of exponential type.  For
any $\ul B \in \cC_\infty(\cN_r(\fg))$ and any $r > 0$,  we set 
$\Lambda_r(\ul B) \ = \ (B_{r-1},B_{r-2},\ldots,B_0)$
and similarly define $\Lambda_{r+c,r}.$
We define 
$$\Lambda_r: V(G) \ \to \ V_r(G)(k), \quad \cE_{\ul B} \mapsto \cE_{\Lambda_r(\ul B)} \circ i_r,$$
$$\Lambda_{r+c,r}: V_{r+c}(G) \ \to \ V_r(G), \quad \cE_{\ul B} \circ i_{r+c} \mapsto \cE_{\Lambda_r(\ul B)} \circ i_r.$$
\end{defn}

The key observation which enables the formalism of  Section \ref{sec:gatype} to be extended 
to rational $G$-modules for $G$ equipped with a structure of exponential type is the following proposition, 
essentially an interpretation in our context of Proposition 2.3 of \cite{S1}.  
This can be viewed as a generalization of Proposition 
\ref{linapprox}.   We give an overview of Sobaje's proof which we shall use in the proof of our
refinement in Proposition \ref{sobaje-max}.

\begin{prop} (cf. Sobaje, \cite[2.3]{S1}) 
\label{sobaje}
Let $G$ be a linear algebraic group equipped with a structure of  exponential type, $\fg = Lie(G)$, 
and $r$ a positive integer.  Then for any 1-parameter subgroup $\cE_{\ul B}: \bG_a \to G$ 
and  any finite dimensional $kG_{a(r)}$-module $M$, the 
pullback of $M$ along the ($k$-rational) $\pi$-point 
\begin{equation}
\label{p1}
k[u]/u^p \to kG_{(r)}, \quad u \mapsto (\cE_{\ul \Lambda_r(B)})_*(u_{r-1}) 
\end{equation}
is projective if and only if the pullback of $M$ along the map of $k$-algebras
\begin{equation}
\label{p2}
k[u]/u^p \to kG_{(r)}, \quad u \mapsto \sum_{s=0}^{r-1} (\cE_{B_s})_*(u_s)
\end{equation}
is projective.
\end{prop}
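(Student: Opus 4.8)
The plan is to reduce the statement to the comparison, for modules over the group algebra of an elementary abelian group scheme, between a flat $\pi$-point and its linearization, exactly as recorded in \cite[2.7]{FPS} and already exploited to prove Theorem \ref{Ga-items}(1); the present task is to transport that mechanism to general $G$ of exponential type. The device that makes ``linearization'' meaningful is to factor both (\ref{p1}) and (\ref{p2}) through a common elementary abelian group scheme. Since $\ul B = (B_0,\ldots,B_{r-1}) \in C_r(\cN_p(\fg))$ consists of pairwise commuting $p$-nilpotent elements, Definition \ref{str-exptype}(2) makes $\cE_{B_0},\ldots,\cE_{B_{r-1}}$ pairwise commuting 1-parameter subgroups, so that $\phi: \bG_a^{\times r} \to G$, $(t_0,\ldots,t_{r-1}) \mapsto \prod_{s=0}^{r-1}\cE_{B_s}(t_s)$, is a homomorphism of algebraic groups. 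Restricting to $r$-th Frobenius kernels gives $\phi_{(r)}: E \to G_{(r)}$ with $E := \bG_{a(r)}^{\times r}$, and on group algebras $(\phi_{(r)})_*: kE = (k\bG_{a(r)})^{\otimes r} \to kG_{(r)}$.

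First I would write both maps as $(\phi_{(r)})_*$ composed with a $\pi$-point of $E$. Denoting by $u_s^{(s)}$ the element $u_s$ sitting in the $s$-th tensor factor of $kE$, functoriality of $(-)_*$ applied to the coordinate inclusions gives $(\cE_{B_s})_*(u_s) = (\phi_{(r)})_*(u_s^{(s)})$, so (\ref{p2}) equals $(\phi_{(r)})_* \circ \beta_2$ with $\beta_2: u \mapsto \sum_{s=0}^{r-1} u_s^{(s)}$. For (\ref{p1}), the identity $\cE_{\Lambda_r(B)} = \prod_{j=0}^{r-1}(\cE_{B_j}\circ F^{r-1-j})$ coming from (\ref{exponent}) and Definition \ref{Lambda}, together with the commuting property, exhibits $\cE_{\Lambda_r(B)}$ as $\phi \circ \delta$, where $\delta: \bG_a \to \bG_a^{\times r}$ is $t \mapsto (t^{p^{r-1}}, t^{p^{r-2}},\ldots, t)$. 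Each component remains $p^r$-nilpotent on $\bG_{a(r)}$ and the last is the identity, so $\delta$ restricts to a closed immersion $\delta_{(r)}: \bG_{a(r)} \to E$ and $\cE_{\Lambda_r(B)}\circ i_r = \phi_{(r)} \circ \delta_{(r)}$; hence (\ref{p1}) equals $(\phi_{(r)})_* \circ \beta_1$ with $\beta_1: u \mapsto (\delta_{(r)})_*(u_{r-1})$.

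The crux, and the step I expect to be the main obstacle, is to show that $\beta_2$ is precisely the linear part of $\beta_1$ relative to the standard generators $\{u_\ell^{(j)}\}$ of $kE$; this is the exponential-type generalization of Proposition \ref{linapprox}. I would compute $(\delta_{(r)})_*(u_{r-1})$ by applying the iterated coproduct of $k\bG_a$ to $u_{r-1}$ and then the Frobenius twists $(F^{r-1-j})_*$ slotwise, using that $(F^m)_*(\gamma_a) = \gamma_{a/p^m}$ when $p^m \mid a$ and $0$ otherwise, where $\gamma_a$ is dual to $T^a$ (so $\gamma_{p^\ell} = u_\ell$). The surviving contributions are indexed by tuples $(c_0,\ldots,c_{r-1})$ with $\sum_j p^{r-1-j} c_j = p^{r-1}$, giving the term $\gamma_{c_0}\otimes\cdots\otimes\gamma_{c_{r-1}}$; such a term has total degree one exactly when a single slot $j$ carries $\gamma_{c_j}=u_j$ (forced by $p^{r-1-j}c_j = p^{r-1}$, i.e.\ $c_j = p^j$) and the others are trivial. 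Summing these yields precisely $\sum_j u_j^{(j)} = \beta_2(u)$, while every remaining term is a product of at least two generators. Thus $\beta_1 = \beta_2 + (\text{higher order})$, matching the $r \geq 2$ nonlinearity already visible in (\ref{mu}); the delicate point is the bookkeeping of which $p$-powers survive the twists and how this interacts with the order-reversal $\Lambda_r$.

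With the key claim established, the conclusion is immediate: pulling $M$ back along (\ref{p1}) (resp.\ (\ref{p2})) coincides with pulling the finite dimensional $kE$-module $\phi_{(r)}^*M$ back along the $\pi$-point $\beta_1$ (resp.\ its linearization $\beta_2$). Both $\beta_1$ and $\beta_2$ are flat, since $\delta_{(r)}$ is a closed immersion and $u_{r-1}$ is a $\pi$-point of $\bG_{a(r)}$, while $\beta_2$ is a nonzero linear $\pi$-point; hence \cite[2.7]{FPS} applies and shows that $\beta_1^*(\phi_{(r)}^* M)$ is projective if and only if $\beta_2^*(\phi_{(r)}^* M)$ is, which is the assertion. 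Were one to drop the finite dimensionality hypothesis, one would invoke \cite[4.6]{FP2} in place of \cite[2.7]{FPS}, exactly as in the proof of Theorem \ref{Ga-items}(1).
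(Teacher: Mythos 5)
Your proposal is correct and is essentially the paper's own (Sobaje's) argument: the paper likewise factors the relevant 1-parameter subgroup through $\bG_a^{\times r}$ via $\Phi_r = \cE_{B_0}\bullet\cdots\bullet\cE_{B_{r-1}}$ and $\Psi_r = \times_{s} F^s$ (your $\phi$ and, up to the order-reversal $\Lambda_r$ which you absorb into $\delta$, your twisted diagonal), identifies the pushforward of $u_{r-1}$ as the linear sum $\sum_s (\cE_{B_s})_*(u_s)$ plus terms that are products of two or more $p$-nilpotent elements, and concludes by a linearization-detects-projectivity lemma. The only cosmetic difference is where the final comparison takes place: the paper applies \cite[6.4]{SFB2} inside the abelian subalgebra $\Phi_{r*}(k\bG_{a(r)}^{\times r})\subset kG_{(r)}$, whereas you pull $M$ back to the elementary abelian group algebra and invoke \cite[2.7]{FPS}, exactly as in the paper's proof of Theorem \ref{Ga-items}(1).
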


\begin{proof}
By Proposition \ref{commute}, $\sum_{s=0}^{r-1} (\cE_{B_s})_*(u_s)$ is a sum of $p$-nilpotent,
pairwise commuting elements of $kG$ and thus (\ref{p2}) is well defined.   

The proof of  \cite[2.3]{S1} applies essentially verbatim.    Sobaje's 
proof proceeds by factoring $\cE_{\ul B}$ as 
$$ \cE_{\ul B} \ = \ \Phi_r \circ \Psi_r: \bG_a \to \bG_a^{\times r} \to \bG_a, 
\quad \Phi_r = \cE_{B_0} \bullet \cdots \bullet  \cE_{B_{r-1}}, \ 
\Psi_r = \times_{s=0}^{r-1} F^s.$$
Sobaje observes that a simple tensor $x_0 \otimes \cdots \otimes x_{r-1} \in k(\bG_a^{\times r})$
is sent by $\Phi_{r*}$ to the product $\cE_{B_0*}(x_0)\bullet \cdots \bullet \cE_{B_{r-1}*}(x_{r-1})$.  
He then verifies that $\Psi_{r_*}: k\bG_a \to
k(\bG_a^{\times r})$ sends $u_{r-1}$ to the sum of simple tensors of the form 
$u_{s,r-s} \equiv 1\otimes \cdots 1 \otimes u_s \otimes 1 \cdots \otimes 1$ 
(with $u_s$ in the $r-1-s$ position) plus a sum
of terms involving a product of two or more $p$-nilpotent terms in $k(\bG_a^{\times r})$.

Thus, $\Phi_{r_*}$ sends $\Psi_{r*}(u_{r-1})$ to the sum $\sum_{s=0}^{r-1} \cE_{B_{r-s-1}*}(u_s)$
plus  the image under  $\Phi_{r_*}$ of a sum of terms involving a product of two or more
 $p$-nilpotent elements in $k(\bG_a^{\times r})$.
Consequently, by \cite[6.4]{SFB2} applied to the abelian subalgebra of $kG$ generated by the
image of $\Phi_{r_*}$, the restriction of $M$ along (\ref{p1}) is  projective  if 
and only if the restriction of $M$ along (\ref{p2}) is projective for any finite dimensional 
$kG_{(r)}$-module $M$.  
\end{proof}

Recall that a $\pi$-point of an affine group scheme $G$ is a left flat map $\alpha: K[t]/t^p \to KG$
which factors through the group algebra of some commutative subgroup scheme $C_K \subset G_K$.
In Proposition \ref{sobaje}, the map (\ref{p1}) is a $\pi$-point, factor through the group algebra
of the image of the 1-parameter subgroup $\cE_{\ul B})$.
Using this proposition, we can conclude that (\ref{p2}) is also a $\pi$-point, thereby allowing
us to compare  Jordan types using the results of \cite{FPS}.

\begin{prop}
\label{sobaje-max}
Let $G$ be a linear algebraic group equipped with a structure of  exponential type, $\fg = Lie(G)$, 
and $r$ a positive integer.    Then 
(\ref{p2}) is a $\pi$-point of $G_{(r)}$ equivalent to the $\pi$-point (\ref{p1}).

Let $M$ a finite dimensional rational $G$-module.
Let $[\ul c] = \sum_{i=1}^p[c_i]$ be a  Jordan type maximal
among the Jordan types of $M$ at infinitesimal 1-parameter subgroups $\mu: \bG_{a(r)} \to G$.
Then the pullback of $M$ along the  $\pi$-point (\ref{p1})
has Jordan type $[\ul c]$ if and only if the pullback of $M$ along the $\pi$-point (\ref{p2})
has Jordan type $[\ul c]$.
\end{prop}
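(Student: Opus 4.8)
The plan is to build directly on Proposition \ref{sobaje} and the comparison of maximal Jordan types established in \cite{FPS}. The first claim---that (\ref{p2}) is a $\pi$-point of $G_{(r)}$ equivalent to the $\pi$-point (\ref{p1})---follows almost formally from what has already been proved. Proposition \ref{commute} guarantees that $\sum_{s=0}^{r-1}(\cE_{B_s})_*(u_s)$ is a sum of pairwise commuting $p$-nilpotent elements of $kG$, so the map $u \mapsto \sum_{s=0}^{r-1}(\cE_{B_s})_*(u_s)$ lands in the group algebra of an abelian subgroup scheme and hence defines a $\pi$-point provided it is flat. Flatness of a map $k[u]/u^p \to kG_{(r)}$ is equivalent to the image of $u$ being a nonzero $p$-nilpotent element (up to the usual base-change conventions), and Proposition \ref{sobaje} already tells us that pullback along (\ref{p2}) is projective exactly when pullback along the $\pi$-point (\ref{p1}) is. Taking $M = kG_{(r)}$ itself (or any faithful free module) shows (\ref{p2}) cannot be the zero map when (\ref{p1}) is a genuine $\pi$-point, so (\ref{p2}) is flat and hence a $\pi$-point; the equivalence of the two $\pi$-points is then precisely the biconditional on projectivity of arbitrary finite-dimensional pullbacks furnished by Proposition \ref{sobaje}, read through the definition of $\pi$-point equivalence.

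The second claim is the substantive one, and here the plan is to invoke the main technical result of \cite{FPS} on maximal Jordan types. The key input is that the $\pi$-point (\ref{p1}) arises as $u \mapsto (\cE_{\Lambda_r(\ul B)})_*(u_{r-1})$, which is the action at an honest infinitesimal $1$-parameter subgroup, and by hypothesis $[\ul c]$ is maximal among Jordan types of $M$ at such subgroups $\mu:\bG_{a(r)}\to G$. I would recall that in Sobaje's factorization $\cE_{\ul B} = \Phi_r\circ\Psi_r$ reproduced in the proof of Proposition \ref{sobaje}, the element $\Phi_{r*}(\Psi_{r*}(u_{r-1}))$ equals $\sum_{s=0}^{r-1}\cE_{B_{r-s-1}*}(u_s)$ plus a correction that is a sum of products of two or more $p$-nilpotent elements in the image of $\Phi_{r*}$. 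Thus (\ref{p1}) and (\ref{p2}) differ only by higher-order terms living in the same abelian subalgebra of $kG$, exactly the situation in which \cite[1.13]{FPS} (and its companion \cite[2.7]{FPS}) asserts that the maximal Jordan type is detected by the linear part. The conclusion is that pullback along (\ref{p2}) realizes a maximal Jordan type precisely when pullback along (\ref{p1}) does.

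The delicate point is ensuring that the comparison of Jordan types, and not merely of projectivity, is licensed by \cite{FPS}. Proposition \ref{sobaje} as stated only compares projectivity, so I would need to re-run Sobaje's argument keeping track of the full operator rather than just its free/non-free dichotomy. Concretely, the hard part will be verifying that the result of \cite{FPS} applies to the pair (\ref{p1}), (\ref{p2}) in the form needed: one must observe that both $\pi$-points factor through the group algebra of the image of $\cE_{\ul B}$, an abelian subgroup scheme, so that the ambient algebra is a quotient of a polynomial algebra on commuting $p$-nilpotent generators, and that (\ref{p2}) is the \emph{linearization} of (\ref{p1}) in the sense required by \cite[1.13]{FPS}. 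Once this identification of (\ref{p2}) as the linear part of (\ref{p1}) within that abelian group algebra is made precise---using the explicit description of the correction terms recalled above---the theorem of \cite{FPS} applies verbatim and yields the stated equivalence of maximal Jordan types.

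I expect the main obstacle to be this bookkeeping step: confirming that the specific higher-order correction produced by $\Phi_{r*}$ fits the hypotheses of \cite[1.13]{FPS} on nonlinear perturbations of a flat map, since that result is the precise tool guaranteeing that maximality of Jordan type (equivalently, maximality of each $j$-rank) is preserved when one passes from an operator to its linear part. Granting that, the argument reduces to a direct citation.
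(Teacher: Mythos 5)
Your first claim is assembled from the same ingredients as the paper's proof, but two of your justifications need repair. The criterion you invoke for flatness --- that a map $k[u]/u^p \to kG_{(r)}$ is flat if and only if the image of $u$ is a nonzero $p$-nilpotent element --- is false: $u \mapsto u_0u_1$ in $k[u_0,u_1]/(u_0^p,u_1^p)$ has nonzero $p$-nilpotent image but is not flat. You do not need it: by Proposition \ref{sobaje} the pullback of $M = kG_{(r)}$ along (\ref{p2}) is free over $k[u]/u^p$, and that freeness \emph{is} the flatness of (\ref{p2}); this is exactly the paper's argument, which you state one sentence earlier and then bypass. Likewise, Proposition \ref{commute} only places the image of (\ref{p2}) in a commutative subalgebra of $kG$, whereas a $\pi$-point must factor through the group algebra of an abelian subgroup \emph{scheme}; the correct receptacle is the group algebra of the image of $\Phi_r : \bG_{a(r)}^{\times r} \to G_{(r)}$ from Sobaje's factorization, not (as you write later) the image of $\cE_{\ul B}$, whose group algebra need not contain the individual terms $(\cE_{B_s})_*(u_s)$. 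With these repairs, your reading of Proposition \ref{sobaje} as precisely the equivalence of the two $\pi$-points coincides with the paper.

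The genuine gap is in the second claim, where you and the paper part ways. The paper deduces the Jordan-type statement from the already-established equivalence of (\ref{p1}) and (\ref{p2}) together with \cite[3.5]{FPS}: equivalent $\pi$-points, at one of which $M$ attains a maximal Jordan type, have equal Jordan types. You instead propose to apply the linearization theorem \cite[1.13]{FPS}, \cite[2.7]{FPS} directly, viewing (\ref{p2}) as the linear part of (\ref{p1}). But that theorem's maximality hypothesis is maximality of the Jordan type of the module over the ambient truncated polynomial algebra: here one must work upstairs with $\Phi_r^*M$ over $k\bG_{a(r)}^{\otimes r}$ (the theorem does not apply to the quotient $\Phi_{r*}(k\bG_{a(r)}^{\otimes r})$, where ``linear part'' is not even presentation-independent), so the required hypothesis is maximality among all flat maps $k[t]/t^p \to k\bG_{a(r)}^{\otimes r}$. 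The proposition's hypothesis is maximality among infinitesimal 1-parameter subgroups $\mu:\bG_{a(r)} \to G$, and nothing you cite identifies the two conditions: a linear flat map into $k\bG_{a(r)}^{\otimes r}$ pushes forward under $\Phi_{r*}$ to an operator of the form $\sum_{s,t}\lambda_{s,t}(\cE_{B_t})_*(u_s)$, which (Definition \ref{str-exptype} imposing no additivity on $\cE$ in the $\cN_p(\fg)$-variable) is not the action at any 1-parameter subgroup of $G$, nor even visibly a $\pi$-point of $G_{(r)}$, so its Jordan type is not a priori bounded by $[\ul c]$. Closing this mismatch is essentially as hard as the proposition itself; the paper's route through $\pi$-point equivalence and \cite[3.5]{FPS} is designed precisely to avoid it, since there the maximality in play is that among $\pi$-points of the finite group scheme $G_{(r)}$, which the FPS theory for infinitesimal group schemes ties to 1-parameter subgroups. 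In short, what you call the ``bookkeeping step'' is where the proof actually lives, and as set up it does not close.
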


\begin{proof}
We first consider the special case $M = kG_{(r)}$.
Then $M$ is free as a (left) $kG_{(r)}$-module and since (\ref{p1}) is flat, the restriction of $M$ 
along (\ref{p1}) is a free $k[u]/u^p$-module.  By Proposition \ref{sobaje}, the restriction of $M$ 
along (\ref{p2}) is thus also a free $k[u]/u^p$-module.   Consequently, (\ref{p2}) is flat.
As observed in the proof of Proposition \ref{sobaje}, $\sum_{s=0}^{r-1} (\cE_{B_s})_*(u_s)$
lies in $\phi_{r*}(k\bG_{a(r)}^{\times r})$.  The latter is the group algebra $kC$ of the
abelian subgroup scheme of $G_{(r)}$ defined as the image under $\phi_r$ of $\bG_{a(r)}^{\times r}$.
Thus, (\ref{p2}) is a $\pi$-point.

Now, let $M$ denote an arbitrary finite dimensional rational $G$-module.
Proposition 4.2 is exactly the statement that the $\pi$-points (\ref{p1}) and (\ref{p2}) are equivalent.
The second statement of the proposition follows from the first and the independence of 
the Jordan type of $\pi$-points which are equivalent and for which the Jordan type of
$M$ is  maximal for one of the $\pi$-points \cite[3.5]{FPS}.
\end{proof}

Propositions \ref{sobaje} and \ref{sobaje-max} suggest the following extension of Definition \ref{supportGa}.  

\begin{defn}
\label{supportG}
Let $G$ be a linear algebraic group equipped with a structure of  exponential type and let
$M$ be a  rational $G$-module. 
We define the {\bf support variety} of $M$ to be the subset $V(G)_M \ \subset \ V(G)$ consisting 
of those $\cE_{\ul B}$ such that $M$ restricted to $k[u]/u^p$ is not free, where 
$u = \sum_{s \geq 0} (\cE_{B_s})_*(u_s) \in kG$ (as in (\ref{gact})).

For $M$ finite dimensional, we define the {\bf Jordan type} of $M$ as a rational $G$-module at $\cE_{\ul B}$ by
$$JT_{G,M}(\cE_{\ul B})\quad \equiv \quad JT(\sum_{s \geq 0} (\cE_{B_s})_*(u_s),M),$$
the Jordan type of the action of $G$ on $M$ at $\cE_{\ul B}$ (see Definition \ref{Gaction}).
For such a  finite dimensional rational $G$-module $M$, 
$V(G)_M \ \subset \ V(G)$ consists of those 1-parameter subgroups $\cE_{\ul B}$ 
such that some block of the Jordan type of $M$ at $\cE_{\ul B}$ has size $< p$.
\end{defn}

We proceed to verify that this definition of Jordan type satisfies the ``same" list of properties
as that of Theorem \ref{Ga-items}.  First, we require the following definition,  closely
related to the formulation of $p$-nilpotent degree given in \cite[2.6]{F1}.

\begin{defn} (cf. \cite[2.5]{F1})
\label{expdegree}
Let $G$ be a linear algebraic group equipped with a structure of exponential type and let $M$ be a 
rational $G$-module.  Then $M$ is said to have  exponential degree $< p^r$
if $(\cE_B)_*(u_s)$ acts trivially on $M$ for all $s \geq r$, all $B \in \cN_p(\fg)$.
\end{defn}

For example, Proposition \ref{bound} tells us that every finite dimensional $G$-module $M$ has
exponential degree $< p^r$ for $r$ sufficiently large.

\begin{thm}
\label{G-items}
Let $G$ be a linear algebraic group equipped with a structure of  exponential type and
$M$ a rational $G$-module
\begin{enumerate}
\item  
If $M$ has exponential degree $< p^r$, then 
$V(G)_M \ = \ \Lambda_r^{-1}(V_r(G)_M(k)))$ (which equals $pr_r^{-1}(\lambda_{r,r}(V_r(G)_M)(k))$).
\item
If $M$ is finite dimensional, then $V(G)_M \subset V(G)$ is closed.
\item
$V(G)_{M\oplus N} = V(G)_M \cup V(G)_N.$
\item
$V(G)_{M\otimes N} = V(G)_M \cap V(G)_N.$
\item
If $0 \to M_1 \to M_2 \to M_3 \to 0$ is a short exact sequence
of rational $G$-modules, then the support variety $V(G)_{M_i}$ of one of the
$M_i$'s is contained in the union of the support varieties of the other two.
\item 
If $G$ admits an embedding $G \hookrightarrow GL_N$ of exponential type defined over $\bF_p$, then 
$$V(G)_{M^{(1)}} \ = \ \{ \cE_{(B_0,B_1,B_2\ldots)} \in V(G) :
 \cE_{(B_1^{(1)},B_2^{(1)},\ldots)} \in V(G)_M\} .$$
\item  For any $r >0$, the restriction of $M$ to $kG_{(r)}$ is injective (equivalently, projective) 
if and only if the intersection of $V(G)_M$ with the subset 
$\{ \psi_{\ul B}: B_s = 0, s > r \}$ inside $V(G)$ equals $\{ \cE_{\ul 0} \}$.
\item  $V(G)_M \ \subset \ V(G)$ is a $G(k)$-stable subset.
\end{enumerate}
\end{thm}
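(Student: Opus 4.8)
The plan is to transcribe the proof of Theorem~\ref{Ga-items} to the present setting, with Propositions~\ref{sobaje} and~\ref{sobaje-max} playing the role that the $\bG_a$-specific Proposition~\ref{com-Ga} played there. The heart of the matter is (1); everything else is either a formal consequence of (1) or a pointwise computation over $k[u]/u^p$. For (1), suppose $M$ has exponential degree $< p^r$. Then at a 1-parameter subgroup $\cE_{\ul B}$ the operator $\sum_{s\ge 0}(\cE_{B_s})_*(u_s)$ defining membership in $V(G)_M$ reduces to $\sum_{s=0}^{r-1}(\cE_{B_s})_*(u_s)$, which is precisely the $\pi$-point~(\ref{p2}). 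Proposition~\ref{sobaje} says this is free on $M$ if and only if the $\pi$-point~(\ref{p1}), namely $(\cE_{\Lambda_r(\ul B)})_*(u_{r-1}) = \Lambda_r(\cE_{\ul B})$, is free on $M$; and freeness at the latter is by definition the statement $\Lambda_r(\cE_{\ul B})\notin V_r(G)_M(k)$. For finite dimensional $M$ this is immediate, and for arbitrary $M$ one reduces freeness to finite dimensional test modules via \cite[4.6]{FP2}, exactly as in Theorem~\ref{Ga-items}(1). This yields $V(G)_M = \Lambda_r^{-1}(V_r(G)_M(k)) = pr_r^{-1}(\lambda_{r,r}(V_r(G)_M(k)))$. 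Part (2) is then formal: $V_r(G)_M(k)$ is closed by the finite group scheme result \cite[6.1]{SFB2}, $\lambda_{r,r}$ is an algebraic involution hence a homeomorphism, and $pr_r$ is continuous by definition of the topology on $V(G)$, so the displayed preimage is closed.

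For (3)--(5) I would argue one 1-parameter subgroup $\cE_{\ul B}$ at a time. Since $\ul B\in C_\infty(\cN_p(\fg))$ has only finitely many nonzero entries and $(\cE_0)_*$ kills the augmentation ideal, the element $u = \sum_{s\ge 0}(\cE_{B_s})_*(u_s)$ is a genuine (finite) $p$-nilpotent, hence determines a map $k[u]/u^p \to kG$ along which any rational $G$-module restricts; by Proposition~\ref{sobaje-max} this is in fact a $\pi$-point. Properties (3) and (5) are then the elementary two-out-of-three facts for freeness of $k[u]/u^p$-modules under direct sums and short exact sequences. Property (4) is the one genuinely more than elementary: one invokes the tensor-product property for $\pi$-point support from \cite{FP2}, giving that $u^*(M\otimes N)$ is free if and only if $u^*(M)$ or $u^*(N)$ is free; equivalently, for finite dimensional $M, N$ one transports the established tensor-product property of $V_r(G)_{-}$ for the finite group scheme $G_{(r)}$ through the identification of (1), using that $pr_r^{-1}\circ\lambda_{r,r}$ preserves intersections.

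Parts (6) and (7) follow the pattern of Theorem~\ref{Ga-items}(6),(7). For (6), under an embedding $G\hookrightarrow GL_N$ of exponential type over $\bF_p$ (so $\cE_{\ul B} = \exp_{\ul B}$), Proposition~\ref{func-frob} identifies the action of $G$ on $M^{(1)}$ at $\exp_{\ul B}$ with $\sum_{s\ge 1}(\exp_{B_s^{(1)}})_*(u_{s-1}) = \sum_{t\ge 0}(\exp_{B_{t+1}^{(1)}})_*(u_t)$, which is exactly the action of $G$ on $M$ at $\exp_{(B_1^{(1)},B_2^{(1)},\ldots)}$; hence $M^{(1)}$ is free at $\exp_{\ul B}$ precisely when $M$ is free at $\exp_{(B_1^{(1)},B_2^{(1)},\ldots)}$, giving the displayed description of $V(G)_{M^{(1)}}$. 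For (7), on the subset of $V(G)$ of 1-parameter subgroups supported in degrees $< p^r$ (those $\cE_{\ul B}$ with $B_s = 0$ for $s\ge r$) the map $\Lambda_r$ restricts to a bijection onto $V_r(G)(k)$, and Proposition~\ref{sobaje} identifies freeness at such a $\cE_{\ul B}$ with freeness at the infinitesimal 1-parameter subgroup $\Lambda_r(\cE_{\ul B})$. Thus the intersection equals $\{\cE_{\ul 0}\}$ if and only if $V_r(G)_M = \{0\}$, which by \cite[6.1]{SFB2} (and \cite[4.6]{FP2} for the infinite dimensional case) is equivalent to injectivity of $M$ restricted to $kG_{(r)}$.

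For the final statement (8), recall from Proposition~\ref{1param}(4) that $g\in G(k)$ acts on $V(G)$ by $\cE_{\ul B}\mapsto c_g\circ\cE_{\ul B}$, where $c_g\colon G\to G,\ h\mapsto ghg^{-1}$; by the $G$-equivariance of $\cE$ in Definition~\ref{str-exptype} this is $\cE_{\Ad(g)\ul B}$, so the action preserves $V(G)$. The operator at $c_g\circ\cE_{\ul B}$ is $\sum_s(c_g)_*(\cE_{B_s})_*(u_s) = g\,u\,g^{-1}$, the conjugate in $kG$ of the operator $u = \sum_s(\cE_{B_s})_*(u_s)$ at $\cE_{\ul B}$. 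Since $g$ acts invertibly on the rational module $M$, conjugation by $\rho_M(g)$ intertwines the operators $u$ and $g\,u\,g^{-1}$ on $M$, so these $p$-nilpotent operators have isomorphic $k[u]/u^p$-restrictions; hence $M$ is free along one if and only if it is free along the other, and $\cE_{\ul B}\in V(G)_M$ if and only if $c_g\circ\cE_{\ul B}\in V(G)_M$, which is the asserted $G(k)$-stability. The main obstacle in the whole argument is concentrated in (1): there the opaque ``exponential'' operator $\sum_s(\cE_{B_s})_*(u_s)$ must be shown, via Sobaje's factorization in Proposition~\ref{sobaje}, to be a $\pi$-point equivalent to the familiar infinitesimal action $\Lambda_r(\cE_{\ul B})$, and there one must pass from finite dimensional test modules to arbitrary $M$ using \cite[4.6]{FP2}. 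Once (1) is secured, (2)--(8) are formal or reduce to standard $k[u]/u^p$ and $\pi$-point facts, the only nontrivial external input being the tensor-product property used in (4).
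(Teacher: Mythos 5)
Your proposal is correct and follows essentially the same route as the paper's own proof: part (1) via Proposition \ref{sobaje}/\ref{sobaje-max} together with \cite[4.6]{FP2} for infinite dimensional $M$, (2) as a formal consequence, (3)--(5) checked one $\cE_{\ul B}$ at a time after restricting along $k[u]/u^p \to kG$, (6) via Proposition \ref{func-frob}, (7) as the analogue of Theorem \ref{Ga-items}(7), and (8) by conjugation-equivariance. The only differences are expository: you spell out the $\pi$-point tensor-product input for (4) and the operator-level conjugation $u \mapsto gug^{-1}$ for (8), both of which the paper compresses into ``readily checked'' and the isomorphism $M^x \simeq M$, respectively.
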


\begin{proof}
Comparing Definitions 3.1 and 4.3, we see that to prove Property (1) it suffices to compare 
projectivity of $M$ when restricted along the actions
of   $\sum_{s \geq 0} (\cE_{B_s})_*(u_s)$  and of  $(\cE_{\ul \Lambda_r(B)})_*(u_r-1)$.
Since $M$ is assumed to have exponential degree $< p^r$, the action of 
$u = \sum_{s \geq 0} (\cE_{B_s})_*(u_s) $ on $M$ equals that of $\sum_{r=0}^{r-1}  (\cE_{B_s})_*(u_s)$ on $M$.
By \cite[4.6]{FP2}, Proposition \ref{sobaje-max} enables us to extend Proposition \ref{sobaje} to
arbitrary $kG_{a(r)}$-modules, thereby providing the required comparison of projectivity of $M$.

Property (1) immediately implies that $V(G)_M \subset V(G)$ is closed since $V_r(G)_M \subset V_r(G)$
is closed. 
As in the proof of Theorem \ref{Ga-items}, Properties (3), (4), and (5) are readily checked by checking 
at one $\cE_{\ul B} \in V(G)$ at a time.

Corollary \ref{func-frob} applies exactly as in 
the proof of Theorem \ref{Ga-items}
to prove property (6).  Similarly the proof of property (7) of Theorem \ref{Ga-items} applies
with minor notational changes (replacing the reference to Proposition \ref{com-Ga}
by a reference to Proposition \ref{sobaje} extended to modules of infinite dimension using
Proposition \ref{sobaje-max}) to prove Property (7).

Finally, to verify that $V(G)_M$ is
$G(k)$-stable, we observe that for any $x \in G(k)$ the rational $G$-module $M^x$ is isomorphic to
$M$.  Consequently,  the action of $G$ at $x \cdot \cE_{\ul B}$ on $M$ is isomorphic to the action 
of $G$ at $x \cdot \cE_{\ul B}$ on $M^x$  which equals the action of $G$ at $\cE_{\ul B}$ on $M$.
\end{proof}

\begin{remark}
\label{rem-poly}
A special case of Theorem \ref{G-items} is the case $G = GL_n$ and $M$ a polynomial $GL_n$-module
homogenous of some degree as in Example \ref{ex-poly}.  In particular, Theorem \ref{G-items} 
provides a theory of support varieties for modules over the Schur algebra $S(n,d)$ for $n \geq d$.
\end{remark}

For $M$ finite dimensional, the proof of Theorem \ref{G-items}(1) proves the following statement.

\begin{prop}
Let $G$ be a linear algebraic group equipped with a structure of 
exponential type, $M$  a finite dimensional rational $G$-module, and $r$
such that the exponential degree of $M$ is $< p^r$.  Then 
$$JT_{G,M}(\cE_{\ul B}) \ = \ JT_{G_{(r)},M}(\Lambda_r(\cE_{\ul B})).$$
\end{prop}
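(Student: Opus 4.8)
The plan is to rewrite both sides as Jordan types of explicit $p$-nilpotent operators on $M$ and then to recognize these operators as the two $\pi$-points compared in Propositions \ref{sobaje} and \ref{sobaje-max}. By Definition \ref{Gaction}, the left-hand side $JT_{G,M}(\cE_{\ul B})$ is the Jordan type of $\theta_2 \equiv \sum_{s \geq 0}(\cE_{B_s})_*(u_s)$ acting on $M$; since $M$ has exponential degree $< p^r$, every summand with $s \geq r$ annihilates $M$, so $\theta_2 = \sum_{s=0}^{r-1}(\cE_{B_s})_*(u_s)$ is exactly the image of $u$ under the map (\ref{p2}). Unwinding Definitions \ref{Lambda} and \ref{infact}, the right-hand side $JT_{G_{(r)},M}(\Lambda_r(\cE_{\ul B}))$ is the Jordan type of $\theta_1 \equiv (\cE_{\Lambda_r(\ul B)} \circ i_r)_*(u_{r-1})$, the image of $u$ under (\ref{p1}). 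Hence the proposition is exactly the assertion that $JT(\theta_1,M) = JT(\theta_2,M)$.

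First I would record the precise relationship between $\theta_1$ and $\theta_2$, extracted from the Sobaje factorization $\cE_{\ul B} = \Phi_r \circ \Psi_r$ used in the proof of Proposition \ref{sobaje}. Carrying out that computation with $\ul B$ replaced by $\Lambda_r(\ul B)$ and reversing indices shows $\theta_1 = \theta_2 + h$, where $h$ is a sum of terms, each a product of two or more $p$-nilpotent elements of the group algebra $kC$ of the abelian subgroup scheme $C \subset G_{(r)}$ that is the image of $\bG_{a(r)}^{\times r}$ under $\Phi_r$. In other words, $\theta_2$ is the linear part of $\theta_1$ in the commuting nilpotent generators of $kC$ — exactly the analogue, for general $G$ of exponential type, of the relation between $\sigma_{\ul a *}(u_{r-1})$ and its linear part recorded in Proposition \ref{linapprox}.

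Then I would conclude using the $\pi$-point comparison results of \cite{FPS}. Proposition \ref{sobaje-max} already gives that (\ref{p1}) and (\ref{p2}) are equivalent $\pi$-points and, via \cite[3.5]{FPS}, that the pullback of $M$ along $\theta_1$ realizes the maximal Jordan type (over infinitesimal $1$-parameter subgroups of $G$) if and only if the pullback along $\theta_2$ does. Combined with \cite[1.13]{FPS}, which asserts that replacing a flat map $k[u]/u^p \to kC$ by its linear part cannot lower, and in fact fixes, the maximal Jordan type, the relation $\theta_1 = \theta_2 + h$ forces $JT(\theta_1,M) = JT(\theta_2,M)$. This is precisely the upgrade of the proof of Theorem \ref{G-items}(1) from the projectivity biconditional of Proposition \ref{sobaje} to the Jordan-type biconditional of Proposition \ref{sobaje-max}.

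The main obstacle is exactly this passage from projectivity to the finer Jordan-type invariant. Equivalence of $\pi$-points by itself only matches support-variety membership; the Jordan types of $\theta_1$ and its linearization $\theta_2$ coincide not for an arbitrary $p$-nilpotent deformation, but only because the maximal Jordan type is well defined for finite group schemes \cite{FPS} and is preserved under passage to the linear part. Making this comparison go through — rather than transporting the projectivity argument of Proposition \ref{sobaje} verbatim — is the heart of the proof, and is where the maximality built into Proposition \ref{sobaje-max} does the essential work.
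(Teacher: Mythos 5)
Your reduction of the two sides to the operators $\theta_2 = \sum_{s=0}^{r-1}(\cE_{B_s})_*(u_s)$ (the image of $u$ under (\ref{p2}), using exponential degree $< p^r$) and $\theta_1 = (\cE_{\Lambda_r(\ul B)}\circ i_r)_*(u_{r-1})$ (the image of $u$ under (\ref{p1})), followed by Sobaje's factorization and the $\pi$-point results of \cite{FPS}, is exactly the paper's route: the paper's entire proof of this proposition is the remark that the proof of Theorem \ref{G-items}(1) — i.e., Propositions \ref{sobaje} and \ref{sobaje-max} — yields the statement. So structurally your proposal and the paper coincide.

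The difficulty is in your final inference, and your write-up makes it visible. Proposition \ref{sobaje-max} and the results of \cite{FPS} you invoke are \emph{conditional on maximality}: they assert that if the pullback of $M$ along one of the two equivalent $\pi$-points has the maximal Jordan type, then the two pullbacks have the same Jordan type. They assert nothing when neither pullback is maximal, and in that generality the conclusion can genuinely fail: a flat map and its linearization, though equivalent as $\pi$-points, can have different Jordan types at non-maximal points. For instance, in $kE = k[x,y]/(x^p,y^p)$ the maps $t \mapsto x$ and $t \mapsto x - y^2$ are equivalent $\pi$-points, yet on $M = kE/(x-y^2) \cong k[y]/(y^p)$ the first acts with Jordan type $[\,\lceil p/2\rceil\,] + [\,\lfloor p/2 \rfloor\,]$ and the second acts as zero. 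This is precisely why \cite{FPS} restricts to maximal types, and why the paper's own $\bG_a$-analogue, Theorem \ref{maximal}(1), carries an explicit hypothesis that $\ul b$ realizes the maximal Jordan type. Consequently the step ``the relation $\theta_1 = \theta_2 + h$ forces $JT(\theta_1,M) = JT(\theta_2,M)$'' is a non-sequitur for arbitrary $\ul B$: your argument proves the displayed equality only at those $\cE_{\ul B}$ whose Jordan type is maximal among infinitesimal one-parameter subgroups of height $r$. To be fair, the paper's one-line proof has the same defect — it, too, only transports the maximality-conditioned comparison of Proposition \ref{sobaje-max} — so you have faithfully reproduced the paper's argument together with its unacknowledged gap; but as written neither establishes the proposition in the stated generality, and any honest proof must either add the maximality hypothesis or supply a new argument handling the non-maximal locus.
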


\begin{remark}
\label{rem-bound}
As observed in \cite[\S 3]{S1}, the condition on the upper bound for the exponential degree
in Theorem \ref{G-items}(1) can be weakened using \cite[Prop 8]{CLN}.  Namely,
the test for projectivity along the restriction of  (4.2.1) is equivalent
to the test for projectivity for the map $u \mapsto \sum_{s=0}^{r-1} (\cE_{B_s})_*(u_s)$
provided that the $(p-1)$-st power of $u_s$ acts trivially on $M$ for all $s \geq r$.
\end{remark}

Proposition \ref{sobaje-max} enables us to extend consideration of generalized support varieties
for $\bG_a$ (as defined in Definition \ref{def-ga-nonmax}) to linear algebraic groups equipped with
a structure of exponential type.

\begin{defn} 
\label{defn:gen}
Let $G$ be a linear algebraic group equipped with a structure of exponential type.
For any any $j, \ 1 \leq j < r$,  the non-maximal $j$-rank variety of a finite dimensional
rational $G$-module $M$
$$V^j(G)_M \ \subset \ V(G)$$
is defined as  the subset consisting of $\cE_{\ul 0}$ and
 those 1-parameter subgroups $\cE_{\ul B}: \bG_a \to G$ such that 
the rank of $(\sum_{s\geq 0} (\cE_{B_s})_*(u_s))^j: M \to M$ is not maximal.

As in \cite{FP2} and Definition \ref{def-ga-nonmax}, 
$$V^j(G_{(r)})_M \ \subset \ V_r(G)$$
is defined to be the subset of those $\cE_{\ul B}\circ i_r: G_{a(r)} \to \bG_a \to G$ such that either $\ul B = 0$
or the rank of $(\cE_{\ul B}\circ i_r)_*(u_{r-1}^j)$ is not .
\end{defn}

The following theorem is the extension of Theorem \ref{Ga-nonmax} to linear algebraic
groups equipped with a structure of exponential type.  

\begin{thm}
\label{G-nonmax}
Let $G$ be a linear algebraic group equipped with a structure of 
exponential type and let $M$ be a finite dimensional rational $G$-module with
nilpotent exponential degree $< p^r$.
For any $j, \ 1 \leq j < p^r$, the non-maximal $j$-rank variety $V^j(G)_M$ 
of a finite dimensional rational $G$-module
satisfies the following properties:
\begin{enumerate}
\item  $V^j(G)_M \ = \ pr_r^{-1}(\lambda_{r,r}(V^j_r(G)_M(k)))$.
\item
$V^j(G)_M  \subset V(G)$ is a proper, $G(k)$-stable,  closed subspace.
\item
$V^j(G)_M$ is a subspace of $V(G)_M$, with equality if and only if the action of
$G$ at some $\cE_{\ul B}$ has all Jordan blocks of size $p$.
\item  The restriction of $M$ to $kG_{(r)}$ is a module of constant 
$j$-rank if and only if the intersection of $V^j(G)_M$ with the subset 
$\{ \cE_{\ul B}: \ul B \not= 0, \ B_s = 0, s \geq r \} \ \subset \ V(G)$ is empty.
\end{enumerate}
\end{thm}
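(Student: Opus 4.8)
The plan is to follow the pattern already established in the proof of Theorem~\ref{Ga-nonmax}, transporting each of the four statements from the Frobenius-kernel level $V_r(G)$ back up to $V(G)$ via the comparison maps $\Lambda_r$, $pr_r$, and $\lambda_{r,r}$. The engine that makes this possible is Proposition~\ref{sobaje-max}, which guarantees that the two $\pi$-points (\ref{p1}) and (\ref{p2}) are equivalent and, crucially, that they record the \emph{same} maximal Jordan type (hence the same maximal $j$-rank). Since $M$ has exponential degree $< p^r$, the operator $\sum_{s \geq 0} (\cE_{B_s})_*(u_s)$ agrees with its truncation $\sum_{s=0}^{r-1} (\cE_{B_s})_*(u_s)$ on $M$, so all computations of $j$-rank at a $1$-parameter subgroup $\cE_{\ul B}$ reduce to computations at the infinitesimal level after applying $\Lambda_r$.

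First I would prove Property~(1). Here the key point, exactly as in Theorem~\ref{Ga-nonmax}(1), is that by \cite[3.5]{FP2} Proposition~\ref{sobaje-max} remains valid when ``maximal Jordan type'' is replaced by ``maximal $j$-rank''. Thus for each $\cE_{\ul B}$, the $j$-rank of the action of $G$ on $M$ at $\cE_{\ul B}$ equals the $j$-rank of the action of $G_{(r)}$ on $M$ at the infinitesimal $1$-parameter subgroup $\Lambda_r(\cE_{\ul B}) = \cE_{\Lambda_r(\ul B)} \circ i_r$, \emph{provided} one of these $j$-ranks is maximal. Consequently $\cE_{\ul B}$ lies in $V^j(G)_M$ if and only if $\Lambda_r(\cE_{\ul B})$ lies in $V^j_r(G)_M$. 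Using the identity $pr_r = \lambda_{r,r} \circ \Lambda_r$ (the analogue of (\ref{pr=}) for $G$) and that $\lambda_{r,r}$ is an involution, this yields $V^j(G)_M = \Lambda_r^{-1}(V^j_r(G)_M(k)) = pr_r^{-1}(\lambda_{r,r}(V^j_r(G)_M(k)))$.

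Properties~(2) and (3) then follow formally. The closedness and properness in (2) follow from Property~(1) together with the fact that $V^j(G_{(r)})_M \subset V_r(G)$ is closed (by the analogue of Proposition~\ref{pord}, which is itself a restatement of the lower semicontinuity of $j$-rank via Nakayama's lemma) and the defining property of the inverse-limit topology on $V(G)$; the $G(k)$-stability is verified exactly as the $G(k)$-stability in Theorem~\ref{G-items}(8), by observing that $M^x \simeq M$ for any $x \in G(k)$, so that the $j$-rank at $x \cdot \cE_{\ul B}$ equals the $j$-rank at $\cE_{\ul B}$. The containment $V^j(G)_M \subset V(G)_M$ in (3) is immediate from the definitions, since non-maximal $j$-rank forces the Jordan type to have a block of size $< p$; the equality-versus-strict-containment dichotomy is argued precisely as in Theorem~\ref{Ga-nonmax}(3), using that a single Jordan block of size $\ell \leq p$ has $j$-rank $\min\{0,\ell-j\}$, so the $j$-rank is maximal exactly where all blocks have size $p$.

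Finally, Property~(4) is proved by the slight adaptation of the proof of Theorem~\ref{G-items}(7) indicated in the statement. The essential input is that a $G_{(r)}$-module $M$ has constant $j$-rank if and only if $V^j(G_{(r)})_M - \{0\}$ is empty (this is the defining characterization from \cite{FP2}), together with Property~(1) restricted to the subset of $1$-parameter subgroups supported in the first $r$ terms, on which $\Lambda_r$ (equivalently $\lambda_{r,r}$) acts as an involution. Matching up the nonvanishing locus of $V^j(G)_M$ inside $\{ \cE_{\ul B}: \ul B \neq 0,\ B_s = 0,\ s \geq r \}$ with the nonvanishing locus of $V^j(G_{(r)})_M$ then gives the stated equivalence. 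I expect the main obstacle to be bookkeeping the index-reversal in $\Lambda_r$ correctly so that the maximal-$j$-rank hypothesis of Proposition~\ref{sobaje-max} is genuinely available at each step; all the serious content about $j$-rank and $\pi$-point equivalence has already been isolated in Propositions~\ref{sobaje} and \ref{sobaje-max}, so the remaining work is the careful transcription of the $\bG_a$ arguments into the exponential-type setting.
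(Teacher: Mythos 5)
Your proposal is correct and follows essentially the same route as the paper: Property (1) via the $j$-rank version of Proposition \ref{sobaje-max} (justified by \cite[3.5]{FP2}) combined with the proof of Theorem \ref{G-items}(1), and Properties (2)--(4) transcribed from Theorem \ref{Ga-nonmax} and the $G(k)$-stability argument of Theorem \ref{G-items}. The paper's own proof is in fact terser than yours, deferring to those earlier arguments exactly as you reconstruct them, so there is nothing to correct.
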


\begin{proof}
Exactly as remarked at beginning of the proof of Theorem \ref{Ga-nonmax}(1), 
Proposition \ref{sobaje-max} remains valid if the statement is modified by replacing 
Jordan type to $j$-rank thanks to \cite[3.5]{FP2}.  Thus the proof of Theorem \ref{G-items}
applies essentially verbatim to prove the first assertion by appealing to this modified 
version of Proposition \ref{sobaje-max}.

The fact that  $V^j(G)_M  \subset V(G)$ is closed follows from Property (1) and the fact proved in \cite[2.8]{FP2} that 
$V^j(G_{(r)})_M \subset V(G_{(r)})$ is closed; the fact that this inclusion is proper is tautological; the fact that 
it is $G(k)$-stable follows exactly as in the proof of Theorem \ref{G-items}(2).

Properties (3) and (4) are 
proved exactly as Theorem  \ref{Ga-nonmax}(3),(4).
\end{proof}

%%%%%%%%%%%%%%%%%%%%%%%%%%%%%%%%%%%%%%%%%
%%%%%%%%%%%%%%%%%%%%%%%%%%%%%%%%%%%%%%%%%%

\section{Finite dimensional examples}
\label{sec:findim}

Let $G$ be a simple algebraic group of classical type and assume that $p > h$, where $h$
is the Coxeter number of $G$.   Let $\{ \alpha_1,\ldots\alpha_\ell\}$ be the set of simple roots
with respect to some Borel subgroup $B \subset G$,
$\{ \omega_1,\ldots,\omega_\ell\}$ be the set of fundamental
dominant weights of $G$, and for each $j$ write $\omega_j^\vee \ = \ \frac{2\omega_j}{\langle \alpha_j,\alpha_j \rangle}$.
By \cite[2.7]{F1}, the condition that 
all the high weights $\mu$ of $M$ of a rational $G$-module satisfy
\begin{equation}
\label{constraint1}
2\sum_{j=1}^l\langle \mu,\omega_j^\vee \rangle < p
\end{equation}
 implies that $u_i$ acts trivially on $M$
for $i \geq 1.$    Thus, the condition that every $u_i^{p-1}$ acts trivially on $M$ for $i \geq 1$
is implied by  the condition that
\begin{equation}
\label{constraint2}
2\sum_{j=1}^l\langle \mu,\omega_j^\vee \rangle < p(p-1).
\end{equation}

For a $p$-restricted dominant weight $\mu$, we denote by $I_\mu$ the a subset of the root lattice 
$\Pi$ determined by $\mu$ as in \cite[6.2.1]{NPV}) and denote by $\fu_{I_\mu}$ the the Lie algebra 
of the unipotent radical of the associated parabolic subgroup $P_{I_\mu}$.

 \begin{prop} (see \cite[3.1]{S1}
 \label{induced-tensor}
 Let $G$ be a simple algebraic group of classical type and assume that $p \geq  h$.
 Let $\mu_1, \ldots \mu_m$ be dominant weights of $G$, each
satisfying (\ref{constraint2}).   Denote by $M$ the tensor product of Frobenius twists of 
induced modules
$$M \ \equiv \ H^0(\mu_0) \otimes H^0(\mu_1)^{(1)} \otimes \cdots \otimes H^m(\mu_m)^{(m)}.$$
Then
\begin{equation}
\label{tensortwists}
V(G)_M \ = \  \{ \ul B:  \  B_i^{(i)} \in  G\cdot \fu_{I_{\mu_i}} \},
\end{equation}

\end{prop}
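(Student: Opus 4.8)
The plan is to assemble the identity from three inputs already available: the tensor-product formula of Theorem~\ref{G-items}(4), the Frobenius-twist formula of Theorem~\ref{G-items}(6), and the single-weight computation of the restricted support variety $V_1(G)_{H^0(\mu)} = G\cdot\fu_{I_\mu}$ supplied by the Jantzen Conjecture as proved in \cite{NPV}, \cite{O}. Writing $M = \bigotimes_{i=0}^m H^0(\mu_i)^{(i)}$ with the convention $H^0(\mu_0)^{(0)} = H^0(\mu_0)$, Theorem~\ref{G-items}(4) gives
\begin{equation}
V(G)_M \ = \ \bigcap_{i=0}^m V(G)_{H^0(\mu_i)^{(i)}},
\end{equation}
so it suffices to identify each factor $V(G)_{H^0(\mu_i)^{(i)}}$ and then to check that the intersection of the resulting conditions is precisely the right-hand side of (\ref{tensortwists}).

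First I would treat a single induced module $H^0(\mu)$ with $\mu$ satisfying (\ref{constraint2}). By \cite[2.7]{F1} the constraint (\ref{constraint2}) forces $u_s^{p-1}$ to act trivially on $H^0(\mu)$ for all $s \geq 1$; invoking Remark~\ref{rem-bound} (which rests on \cite[Prop 8]{CLN}), this suffices, for the purpose of testing freeness, to replace the full operator $\sum_{s\geq 0}(\cE_{B_s})_*(u_s)$ by its leading term $(\cE_{B_0})_*(u_0)$, and hence to apply the reduction of Theorem~\ref{G-items}(1) with $r=1$. Since $\Lambda_1$ reads off the leading coefficient $B_0$ and $V_1(G)\cong\cN_p(\fg)$, this yields $\cE_{\ul B} \in V(G)_{H^0(\mu)}$ if and only if $B_0 \in V_1(G)_{H^0(\mu)}$. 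The Jantzen Conjecture identifies this restricted-Lie-algebra support variety as $V_1(G)_{H^0(\mu)} = G\cdot\fu_{I_\mu}$, so that $V(G)_{H^0(\mu)} = \{\ul B : B_0 \in G\cdot\fu_{I_\mu}\}$.

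Next I would propagate this through the Frobenius twists. Because $G$ admits an embedding of exponential type defined over $\bF_p$ (true for classical $G$), Theorem~\ref{G-items}(6) applies, and iterating it $i$ times shows that $\cE_{\ul B} \in V(G)_{N^{(i)}}$ if and only if $\cE_{(B_i^{(i)}, B_{i+1}^{(i)}, \ldots)} \in V(G)_N$. Taking $N = H^0(\mu_i)$ and reading off the leading term $B_i^{(i)}$ as in the previous step gives
\begin{equation}
V(G)_{H^0(\mu_i)^{(i)}} \ = \ \{\ul B : B_i^{(i)} \in G\cdot\fu_{I_{\mu_i}}\}.
\end{equation}
Substituting these into the intersection above produces $V(G)_M = \{\ul B : B_i^{(i)} \in G\cdot\fu_{I_{\mu_i}}\ \text{for all}\ i\}$, which is exactly (\ref{tensortwists}).

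I expect the formal reductions to be routine once the properties of $V(G)_{-}$ are in hand; the genuine input is the Jantzen-Conjecture computation of $V_1(G)_{H^0(\mu)}$, the deep external result being imported. The most delicate point internal to this paper is the legitimacy of the reduction to $r=1$: under the weaker hypothesis (\ref{constraint2}) the module $H^0(\mu)$ need not have exponential degree $< p$ in the strict sense of Definition~\ref{expdegree}, so one must argue with some care, via Remark~\ref{rem-bound}, that triviality of the $(p-1)$-st powers of the higher operators is enough to pull the freeness computation back to the first Frobenius kernel, and to do so uniformly in $\ul B$.
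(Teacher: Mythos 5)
Your proposal is correct and takes essentially the same route as the paper's own proof: reduction to the first Frobenius kernel via Theorem~\ref{G-items}(1), the Jantzen Conjecture identification $V_1(G)_{H^0(\mu)} = G\cdot\fu_{I_{\mu}}$ from \cite{NPV}, and properties (4) and (6) of Theorem~\ref{G-items} for the tensor factors and Frobenius twists, with only the immaterial difference that you perform the tensor decomposition first while the paper does it last. If anything you are more careful than the paper, which invokes Theorem~\ref{G-items}(1) directly even though (\ref{constraint2}) only guarantees triviality of the $(p-1)$-st powers of the higher $u_s$; your explicit appeal to Remark~\ref{rem-bound} is exactly the justification needed for that step.
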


\begin{proof}
If $M$ is a finite dimensional rational $G$-module with the property that all high
weights $\mu$ of $M$ satisfy (\ref{constraint2}), then
Theorem \ref{G-items}(1) implies that $V(G)_M \ = \ pr_1^{-1}((V_1(G))(k))$.
In other words, $V(G)_M \ = \  \{exp_{\ul B}: B_0 \in V_1(G)_M \}$.

The ``Jantzen Conjecture" (see \cite[6.2.1]{NPV}) determines explicitly $V_1(G)_M \subset V(G)$ for 
$G = H^0(\mu)$ provided $p$ is good for $G$ (which is implied by $p \geq h$).  Namely, $V_1(G)_M$ 
is a single $G$ orbit, $V_1(G)_M \ = \ G\cdot\fu_{I_\mu}$,
Theorem \ref{G-items}(6) then enables us to to determine $V(G)_M$ for $M$ a Frobenius twist of $H^0(\mu)$
with $\mu$ satisfying (\ref{constraint2}).

Furthermore, Theorem \ref{G-items}(4) enables us to compute $V(G)_M$ for $M$ a 
tensor product of the form $H^0(\mu_0) \otimes H^0(\mu_1)^{(1)} \otimes \cdots \otimes H^m(\mu_m)^{(m)}$
provided that each $\mu_i$ satisfies satisfying (\ref{constraint2}), with answer given by 
(\ref{tensortwists}).
\end{proof}

\begin{ex}
For polynomial $GL_n$-modules, the  bounds for computations given in   Proposition \ref{induced-tensor}
can be weakened as
discussed in Example \ref{ex-poly}; namely, to insure that $u_r \in kGL_n$ acts trivially on the polynomial
$GL_n$-module $M$ of degree $d$, it suffices to assume that $p^ r >(p-1)d$.  Moreover, to insure that 
$u_r^{p-1}$ acts trivially on $M$, it suffices to assume that $p^r > d$.
\end{ex}

\begin{ex}
\label{ex-St}
Consider the polynomial $GL_2$ module $k[x,y]_{p^r-1} \ = St_r$.  Recall that $St_r$ is projective
as a $GL_{2(r)}$-module.  By Example \ref{ex-poly}, the action of $u_r$ on $St_r$ has 
$(p-1)$-st power trivial and the action of $u_{r+i}$ on $St_r$ is trivial for $ i > 0$.   Applying
\cite[4.3]{CLN}, we conclude that the action of $\sum_{s\geq 0}(exp_{A_s})_*(u_s)$ on $St_r$ is
free if some $A_s \not= 0, \ s < r$ and that the action is never free if $A_s = 0$ for all
$s < r$.  Consequently, $V(GL_2)_{St_r} \ = \ pr_r^{-1}(\{ \cE_0 \})$.
\end{ex}

\begin{ex}
\label{ex-max}
As in Remark \ref{rem-poly}, we consider a polynomial $GL_n$-module of degree $d$ with $(p-1)d < p^2$.
For such a rational $GL_n$-module $M$,  Example \ref{ex-poly} tells us that $u_r \in kGL_n$ acts trivially 
on $M$ for $r \geq 2$.  Thus, as in Theorem \ref{G-nonmax}(1), 
$V^j(GL_n)_M =  pr_2^{-1}(\lambda_{2,2}(V_2^j(GL_n)_M(k)))$.
The special case $V_2(SL_2)_M$ for $M$ an irreducible $SL_{2(2)}$-module is worked
out in detail in \cite[4.12]{FP3}.  
Consequently, this detailed computation leads to an explicit description of $V^j(GL_2)_M$ for $M$ 
an irreducible $SL_2$ module of weight $d$ satisfying $(p-1)d < p^2$ (i.e., $d \leq p+1$).
\end{ex}

%%%%%%%%%%%%%%%%%%%%%%%%%%%%%%%%%%%%%%%%%%%%%

\section{Infinite dimensional examples}
\label{sec:infdim}

In this section, we show that injective rational $G$-modules have trivial support variety (Proposition \ref{ker-inj})
and use this to compute non-trivial support varieties of certain special infinite dimensional rational $G$-modules
(Proposition \ref{prop:homog}).

\begin{prop}
\label{zero}
Let $G$ be a linear algebraic group equipped with a structure of exponential type and $I$ a 
rational $G$-module.  Then $V(G)_I \  = \ \{ \cE_0 \}$ if and only if $V_r(G)_I = \{ \cE_0 \}$ for all $r > 0$ 
if and only if the restriction of $I$ to each $G_{(r)}$ is injective.
\end{prop}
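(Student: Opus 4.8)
The plan is to prove the chain of equivalences by reducing everything to the statements already established for finite $r$, using the closed‐set description of $V(G)_I$ from Theorem \ref{G-items} together with the compatibility of the projections $pr_r$. I will treat the three conditions as vertices of a cycle and prove implications going around.

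First I would dispose of the equivalence between the second and third conditions. By Definition \ref{infact}, $V_r(G)_I \subset V_r(G)$ is the rank variety of $I$ as a $kG_{(r)}$-module: a point $\mu$ lies in it precisely when the pullback of $I$ along $\mu_*(u_{r-1})$ is not free. Thus $V_r(G)_I = \{\cE_0\}$ means the restriction of $I$ along every nonzero $\pi$-point arising from an infinitesimal 1-parameter subgroup $\bG_{a(r)} \to G$ is free, i.e. $I$ has no nontrivial support over $G_{(r)}$. By the standard theory of support varieties for finite group schemes (via $\pi$-points, as in \cite{FP2}, applied here even to the infinite-dimensional $I$), triviality of the support variety over $G_{(r)}$ is equivalent to projectivity—equivalently injectivity, since $kG_{(r)}$ is a finite-dimensional Hopf algebra and hence self-injective—of the restriction of $I$ to $G_{(r)}$. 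This is the $\pi$-point characterization of projectivity, valid for arbitrary (not necessarily finite-dimensional) modules by \cite[4.6]{FP2}, so I need only cite that result.

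Next I would connect the first condition to the second. The key input is Theorem \ref{G-items}(1), or more precisely the identity $pr_r = \lambda_{r,r} \circ \Lambda_r$ and the relation $V(G)_M = \Lambda_r^{-1}(V_r(G)_M(k))$ valid once $M$ has bounded exponential degree. For infinite-dimensional $I$ this degree need not be bounded, so I would instead argue directly: by Definition \ref{supportG} a point $\cE_{\ul B}$ lies in $V(G)_I$ iff $I$ is not free along $u = \sum_{s\geq 0}(\cE_{B_s})_*(u_s)$. Fixing $r$ and restricting attention to those $\cE_{\ul B}$ supported on the first $r$ coordinates, Proposition \ref{sobaje}—extended to infinite-dimensional modules via Proposition \ref{sobaje-max} and \cite[4.6]{FP2}—says freeness along $u$ matches freeness along the $\pi$-point $(\cE_{\Lambda_r(\ul B)})_*(u_{r-1})$, which is exactly the condition defining membership in $V_r(G)_I$. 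Consequently $\cE_{\ul B} \in V(G)_I$ with $B_s = 0$ for $s \geq r$ iff $\Lambda_r(\cE_{\ul B}) \in V_r(G)_I$, giving $V(G)_I \cap pr_r^{-1}(V_r(G)) = \Lambda_r^{-1}(V_r(G)_I(k))$ on this locus. Therefore $V(G)_I = \{\cE_0\}$ forces $V_r(G)_I = \{\cE_0\}$ for every $r$; conversely, if all $V_r(G)_I = \{\cE_0\}$, then any nonzero $\cE_{\ul B} \in V(G)_I$ would, after choosing $r$ large enough that the first nonzero $B_s$ has $s < r$, project to a nonzero point of some $V_r(G)_I$, a contradiction.

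\textbf{The main obstacle} will be handling the infinite-dimensionality of $I$ cleanly: Proposition \ref{sobaje} and the comparison of $\pi$-points in \cite{FPS} are phrased for finite-dimensional modules, so the passage to arbitrary $I$ must route through \cite[4.6]{FP2}, which guarantees that comparing projectivity along two equivalent $\pi$-points for all finite-dimensional representations suffices to compare it for arbitrary ones. I would make this reduction explicit at the one place it is needed and otherwise let the finite-$r$ statements carry the argument. The remaining care is bookkeeping with the index-reversal $\Lambda_r$ versus $pr_r$, which is already codified in Remark \ref{invlim} and Theorem \ref{G-items}(1) and so requires no new computation.
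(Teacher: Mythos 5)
Your argument is correct and is essentially the paper's own proof: both reduce to the finite levels $r$ (using that every $1$-parameter subgroup $\cE_{\ul B}$ has finitely supported $\ul B$ and that $\Lambda_r$ is surjective, with the comparison of the operators $\sum_s(\cE_{B_s})_*(u_s)$ and $(\cE_{\Lambda_r(\ul B)})_*(u_{r-1})$ supplied by Propositions \ref{sobaje} and \ref{sobaje-max} extended to infinite-dimensional modules), and both then invoke the known equivalence, for a $G_{(r)}$-module of arbitrary dimension, between triviality of $V_r(G)_I$ and injectivity of the restriction. The only differences are cosmetic: you spell out the $\pi$-point comparison that the paper leaves implicit in its appeal to ``the definition of $V(G)_I$,'' and you route the finite-level equivalence through \cite[4.6]{FP2} where the paper cites \cite{P}.
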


\begin{proof}
Using the observation following Definition \ref{str-exptype} that every infinitesimal 
1-parameter subgroup of $G$ lifts to a some $\cE_{\ul B}: \bG_a \to G$, we conclude that 
$\Lambda_r: V(G) \to V_r(G)(k)$ is surjective.  By construction, $V(G) \ \to \ \varprojlim_r V_r(G)(k)$
is an embedding.  Therefore, the 
definition of $V(G)_I \subset V(G)$ given in Definition \ref{supportG} implies  that
$V(G)_I = \{ \cE_0 \}$ if and only if $V_r(G)_I = \{ \cE_0 \}$ for all $r > 0$.   On the other hand, 
 for any $G_{(r)}$-module $M$ we know that  $V_r(G)_M = 0$ if and only $M$ is injective
as a  $G_{(r)}$-module $M$ (cf. \cite{P}).
\end{proof}

We apply Proposition \ref{zero} to conclude that the support of an injective rational $G$-module
(for $G$ of exponential type) is trivial.    The argument we present
is at least implicit in the work of Jantzen 
(cf. \cite[4.10-4.11]{J}) and was provided to us by J. Pevtsova.

\begin{prop}
\label{ker-inj}
Let $G$ be a linear algebraic group and $I$  an injective rational $G$-module.  
Then for each $r > 0$, the restriction of $I$ 
to $G_{(r)}$ is injective.

Consequently, if $G$ has a structure of exponential type and $I$ is an injective, rational $G$-module, then
$$V(G)_I \ = \ \{ \cE_0 \}.$$
\end{prop}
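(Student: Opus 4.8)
The \emph{consequently} clause is immediate once the first assertion is established: for $G$ of exponential type, Proposition~\ref{zero} identifies the condition $V(G)_I = \{\cE_0\}$ with the requirement that $I$ restrict to an injective $G_{(r)}$-module for every $r>0$. So the whole content lies in the first assertion, which needs no structure of exponential type. The plan is to reduce the injectivity of $I|_{G_{(r)}}$ to the single module $I = k[G]$, and then to invoke the good behaviour of the regular representation along the Frobenius kernel together with the formal properties of injectives over a finite-dimensional algebra.

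For the reduction I would use that, for any rational $G$-module $I$, the comodule structure map $\Delta_I\colon I \to I\otimes k[G]$ is a homomorphism of $G$-modules, where $G$ acts on the target only through the regular representation on the second factor $k[G]$; this is precisely the coassociativity of $\Delta_I$. The counit axiom $(\id \otimes \epsilon)\circ\Delta_I = \id_I$ shows $\Delta_I$ is a split monomorphism of $k$-spaces, hence injective. When $I$ is injective, this $G$-equivariant monomorphism splits in the category of rational $G$-modules, so $I$ is a direct summand of $I\otimes k[G]$, which as a $G$-module is a direct sum of $\dim_k I$ copies of $k[G]$. Restricting to $G_{(r)}$, we conclude that $I|_{G_{(r)}}$ is a direct summand of a direct sum of copies of $k[G]|_{G_{(r)}}$.

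The main point — and the step I expect to be the crux, where the argument leans on Jantzen \cite{J} — is that $k[G]|_{G_{(r)}}$ is injective as a $G_{(r)}$-module. The quotient $G \to G/G_{(r)}$ is faithfully flat and exhibits $G$ as a $G_{(r)}$-torsor over $G/G_{(r)}$; fppf-locally this torsor trivializes, so after a faithfully flat base change $k[G]$ becomes a sum of copies of the regular representation $k[G_{(r)}]$, which is injective because $kG_{(r)}$ is a finite-dimensional self-injective (Frobenius) algebra. Granting this, the proof finishes formally: rational $G_{(r)}$-modules are exactly $kG_{(r)}$-modules, and over the finite-dimensional, hence Noetherian, algebra $kG_{(r)}$ arbitrary direct sums of injectives are injective (Bass--Papp) while direct summands of injectives are injective. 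Thus the direct sum of copies of $k[G]|_{G_{(r)}}$ appearing above is injective, and so is its summand $I|_{G_{(r)}}$.

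Since $I|_{G_{(r)}}$ is then injective for every $r>0$, Proposition~\ref{zero} yields $V(G)_I = \{\cE_0\}$, which is the displayed equality. The only genuinely delicate ingredient is the injectivity of $k[G]|_{G_{(r)}}$: descending injectivity through the fppf base change of the torsor must be handled with care, and it is here that the argument is, as noted, ``implicit in Jantzen.'' Everything else is the formal algebra of summands and infinite direct sums of injectives over the finite-dimensional algebra $kG_{(r)}$, together with the universal embedding $\Delta_I$.
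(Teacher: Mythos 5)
Your reduction of the first assertion to $I = k[G]$ is correct and matches the paper's opening move (the paper simply asserts that any injective rational $G$-module is a summand of a direct sum of copies of $k[G]$; your justification via the counit-split, $G$-equivariant embedding $\Delta_I\colon I \hookrightarrow I \otimes k[G]$ is the standard one), and the ``consequently'' clause via Proposition \ref{zero} is also as in the paper. The problem is the step you yourself single out as the crux: injectivity of $k[G]|_{G_{(r)}}$. Your plan is to trivialize the $G_{(r)}$-torsor $G \to G/G_{(r)}$ after a faithfully flat base change $A = k[G/G_{(r)}] \to B$ and then ``descend'' injectivity, but this descent is precisely the non-formal content of the whole proposition, and you do not prove it. It is not an instance of faithfully flat descent over a single ring: the base change is along the commutative ring $A$, while injectivity is asserted over the different ring $kG_{(r)}$, so from ``$k[G] \otimes_A B$ is an injective $kG_{(r)}$-module'' one cannot formally conclude that $k[G]$ is. The step can be completed, but it needs a real argument; for instance: for $N$ finite dimensional over $kG_{(r)}$, compute $\Ext^i_{kG_{(r)}}(N,k[G])$ from a resolution of $N$ by finitely generated free $kG_{(r)}$-modules; since the $G_{(r)}$-action on $k[G]$ is $A$-linear, the terms of the resulting complex are $A$-modules with $A$-linear differentials, so flat base change gives $\Ext^i_{kG_{(r)}}(N,k[G]) \otimes_A B \cong \Ext^i_{kG_{(r)}}(N,k[G]\otimes_A B) = 0$ for $i>0$, whence $\Ext^i_{kG_{(r)}}(N,k[G])=0$ by faithful flatness, and injectivity follows from Baer's criterion because every cyclic $kG_{(r)}$-module is finite dimensional. (Alternatively: $k[G]\to k[G]\otimes_A B$ is a pure monomorphism of $kG_{(r)}$-modules, and over a Noetherian ring a pure submodule of an injective module is injective.)

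For comparison, the paper proves the key lemma with no descent at all: it shows, for finite dimensional $M$, that $\Ext^n_{G_{(r)}}(M,k[G]) = H^n(G_{(r)}, M^\# \otimes k[G])$, identifies the composite $(-)^{G_{(r)}} \circ (-\otimes k[G])$ with $\Ind_{G_{(r)}}^G(-)$, and uses exactness of $-\otimes k[G]$ to conclude $H^n(G_{(r)}, M^\# \otimes k[G]) = R^n\Ind_{G_{(r)}}^G(M^\#)$, which vanishes for $n>0$ because induction is exact, $G/G_{(r)}$ being affine \cite[I.5.13]{J}. So the affineness of $G/G_{(r)}$ plays for the paper the role your torsor triviality is meant to play, but it enters through a complete, quotable exactness statement rather than through an unproved descent principle. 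As written, your proposal is a sound strategy with a genuine gap at its central step.
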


\begin{proof}
Any injective rational $G$-module $I$ is a summand of a direct sum of copies of the injective module $k[G]$,
so that it suffices to assume $I = k[G]$ because direct sums and  summands of  injective modules are injective.   
To prove that the restriction of $k[G]$ to $G_{(r)}$ is injective, it suffices
to prove for all finite dimensional $kG_{(r)}$-modules $M$ that $Ext^n_{G_{(r)}}(M,k[G]) = 0, \ n > 0$ .  For
such finite dimensional $M$, this is equivalent to showing that 
$$H^n(G_{(r)},M^\# \otimes k[G]) \ = \ R^n((-)^{G_{(r)}})(M^\# \otimes k[G]) = 0, \ n > 0.$$
Recall that the composition $= \ (-)^{G_{(r)}} \circ (-\otimes k[G])$ equals
$Ind_{G_{(r)}}^G(-)$ as functors from ($kG_{(r)}$-modules) to (rational $G$-modules)
Since $(-\otimes k[G])$ is exact, we conclude that 
$$R^nInd_{G_{(r)}}^G(-) \ = \ R^n(-)^{G_{(r)}} \circ (-\otimes k[G]), \ n \geq 0.$$
Since $G/G_{(r)}$ is affine, $G_{(r)} \subset G$ is exact as in \cite[I.5.13]{J}; thus, 
$R^nInd_{G_{(r)}}^G(-) = 0, n > 0$.  We thereby conclude that $H^n(G_{(r)},M^\# \otimes k[G]) = 0$
for $n > 0$ as required.

The second assertion follows immediately from the first and Proposition \ref{zero}.
\end{proof}

As a consequence of Proposition \ref{ker-inj}, we get the following additional computation.

\begin{ex}
\label{prop:homog}
Let $G \simeq H \rtimes K$ be an linear algebraic group equipped with a structure of exponential type
as in Example \ref{ex:sep}. Then 
$$V(G)_{k[K]} \ = \ V(H),$$ 
where $k[K] \ = \ k[G/H]$ is given the rational $G$-module structure 
obtained as the restriction along $\pi: G \to K$ of the natural rational $K$-structure on 
$k[K]$.
\end{ex}

\begin{proof}
Our hypothesis on $H \subset G$ implies  that  every 1-parameter subgroup $\psi: \bG_a \to H$ 
is of the form $\cE_{\ul B}$ with $\ul B \in \cC_r(\cN(\fh))$ for some $r > 0$.     Moreover, by
Example \ref{ex:sep}, $\pi$ is a map of groups of exponential type.

Observe that $\pi \circ \cE_{\ul B}$ is the trivial 1-parameter subgroup of $G/H$ if and only if $\ul B \in 
 \cC_r(\cN(\fg))$ maps to 0 in $ \cC_r(\cN(\fg/\fh))$ if and only if $\ul B \in  \cC_r(\cN(\fh))$.  In
 other words, $V(H) \subset V(G)$ consists of those 1-parameter subgroups $\cE_{\ul B}: \bG_a \to G$ 
 with the property that $\pi \circ \cE_{\ul B}: \bG_a \to G/H$ is trivial; of course, for each such $\cE_{\ul B}$,
 $\sum_{s \geq 0} (\cE_{B_s})_*(u_s)$ acts trivially on any rational $G$-module $M$.  In particular 
any such such $\cE_{\ul B}$ does not act freely of $k[G/H]$.  The proposition
 now follows by applying Proposition \ref{ker-inj} to $G/H$ which tells us that if  $\ul B$ maps to a non-zero element
 $\ul C \in \cC_r(\cN(\fg/\fh))$, then the action of $\sum_s (\cE_{C_s})_*(u_s)$ on $k[G/H]$ is free; on the
 other hand, the action of $\sum_s (\cE_{C_s})_*(u_s)$ equals the action of  $\sum_s (\cE_{B_s})_*(u_s)$
 on $k[G/H]$ since the $G$-action on $k[G/H]$ is that determined by $\pi: G \to G/H$.
 \end{proof}

\begin{ex}
\label{levi}
As in Example \ref{Sobaj}, let $G$ be a reductive group with $PSL_p$ not a factor of $[G,G]$
and assume that $p \geq h(G)$.  Let $P \subset G$ be a parabolic subgroup with unipotent 
radical $U \subset P$ and Levi factor group $\pi: P \to L = P/U$.  Then
\begin{equation}
V(P)_{k[L]} \ = \ V(U) \ \subset \ V(P).
\end{equation}
Here, $k[L]$ is given the structure of a rational $P$-module determined by extending the
usual action of $L$ on $k[L]$ along the quotient map $\pi: P \to L$.
\end{ex}
 
 Using the tensor product property of Theorem \ref{G-items} (4), we obtain the following examples.
 These examples are of interest for they suggest a means of realizing various subspaces of $V(G)$ 
 as the support variety of some (possibly infinite dimensional) rational $G$-module $M$.
 
 \begin{ex}
 Adopt the hypotheses and notation of Proposition \ref{induced-tensor}.  Let $\tilde M$ be given as 
 $$\tilde M \ \equiv \ .H^0(\mu_0) \otimes H^0(\mu_1)^{(1)} \otimes \cdots \otimes H^m(\mu_m)^{(m)} \otimes k[G]^{(m+1)}.$$
 Then 
 $$V(G)_{\tilde M} \ = \ \{ \ul B:   \  B_i^{(i)} \in  G\cdot \fu_{I_{\mu_i}}, 0 \leq i \leq m; \ B_j = 0, j > m \}.$$
 \end{ex}
%%%%%%%%%%%%%%%%%%

\end{document}